\documentclass[11pt, letterpaper]{amsart}
\usepackage{amssymb, amsmath, amsthm}
\usepackage{hyperref}
\usepackage{stmaryrd}
\usepackage{bm}

\usepackage{tikz-cd}
\usetikzlibrary{positioning}

\newtheorem{theorem}{Theorem}[section]
\newtheorem{lemma}[theorem]{Lemma}
\newtheorem{proposition}[theorem]{Proposition}
\newtheorem{corollary}[theorem]{Corollary}

\theoremstyle{remark}
\newtheorem{remark}[theorem]{Remark}
\newtheorem{example}[theorem]{Example}
\theoremstyle{definition}
\newtheorem{definition}[theorem]{Definition}

\newcommand{\torus}{\mathbb{T}}
\newcommand{\CC}{\mathbb{C}}
\newcommand{\PP}{\mathbb{P}}

\newcommand{\ZZ}{\mathbb{Z}}
\newcommand{\QQ}{\mathbb{Q}}
\newcommand{\cO}{\mathcal{O}}

\DeclareMathOperator{\GL}{GL}
\DeclareMathOperator{\tot}{tot}
\DeclareMathOperator{\tr}{tr}
\DeclareMathOperator{\Hom}{Hom}
\DeclareMathOperator{\Gr}{Gr}

\newcommand{\f}{\mathbf{f}}
\newcommand{\g}{\mathbf{g}}

\newcommand{\op}[1]{\operatorname{#1}}

\usepackage{todonotes}
\definecolor{pistachio}{rgb}{0.58, 0.77, 0.45}
\definecolor{red(munsell)}{rgb}{0.95, 0.0, 0.24}
\definecolor{eggshell}{rgb}{0.94, 0.92, 0.84}
\definecolor{pink}{rgb}{0.858, 0.188, 0.478}

\title[Seiberg-like duality for resolutions of determinantal varieties]{Seiberg-like duality for resolutions of determinantal varieties}
\author{Nathan Priddis}
\address{Department of Mathematics, 275 TMCB, Brigham Young University, Provo, UT 84602, USA}
\email{priddis@math.byu.edu}

\author{Mark Shoemaker}
\address{Colorado State University, Department of Mathematics, 1874 Campus Delivery Fort Collins, Co, USA, 80523-1874}
\email{mark.shoemaker@colostate.edu}

\author{Yaoxiong Wen}
\address{School of Mathematics, Korea Institute for Advanced Study, Seoul 02455, Korea}
\email{y.x.wen.math@gmail.com}
\date{}

\begin{document}

\begin{abstract}
We study the genus-zero Gromov--Witten theory of two natural resolutions of determinantal varieties, termed the PAX and PAXY models. We realize each resolution as lying in a quiver bundle, and show that the respective quiver bundles are related by a quiver mutation.  We prove that generating functions of genus-zero Gromov--Witten invariants for the two resolutions are related by a specific cluster change of variables. Along the way, we obtain a quantum Thom--Porteous formula for determinantal varieties and prove a Seiberg-like duality statement for certain quiver bundles.
\end{abstract}

\maketitle
\tableofcontents
\clearpage

\section{Introduction}
In physics, one way that $\mathcal{N}=2$ superconformal field theories arise is via nonlinear sigma models with Calabi--Yau varieties as a target. 
From a mathematical perspective, this motivates one to compute the Gromov--Witten theory of Calabi--Yau varieties.

One of the most important tools we have for such computations is the quantum Lefschetz hyperplane theorem (see e.g. \cite{Giv, Bum, LLY1, LLY2, LLY3,  Lee}), which allows one to compute the genus zero Gromov--Witten theory of a hypersurface in terms of the theory of the ambient space.
This tool is especially effective when the Calabi--Yau in question is a complete intersection in a toric variety.  However, the majority of Calabi--Yau varieties are likely not realizable as complete intersections in toric varieties. In this paper we study determinantal varieties and their resolutions.  These provide a broad class of varieties which are not complete intersections.  
While it is as yet unknown how to compute the Gromov--Witten theory of general Calabi--Yau varieties, following \cite{JKL}, we obtain generating functions which allow one to compute genus zero Gromov--Witten invariants of (resolutions of) a general determinantal variety.

Roughly speaking, determinantal varieties are defined as follows: 
let $B$ be a smooth projective variety, and let $E$, $F$ be split vector bundles of rank $m$ and $n$, resp., over $B$, with $m \geq n$. Assume further that $E^\vee \otimes F$ is generated by sections. We can think of 
$E^\vee\otimes F$ as 
a family of  $n\times m$ matrices.  For a generic section $A \in \Gamma(B, E^\vee \otimes F)$, the matrix $A|_b$ at a generic point $b\in B$ is of full rank. Given a positive integer $s$, the determinantal variety $B(A,s)$ is defined as a locus in of $B$ where the rank of $A|_b$ is less than or equal to $s$. The study of determinantal varieties has a long history.  See \cite{Ber, GP1, GP2, GN, HM, HT, KK, Las, LR, Sch} among others. 

For $s \geq 1$, it is well known that the determinantal variety $B(A,s)$ is not a complete intersection.  Furthermore, it is singular in general, with a singular locus given by $B(A, s-1).$
There are two ways of constructing desingularizations of $B(A,s)$, known as the \emph{PAX} and \emph{PAXY} models respectively. 

The  PAX model resolution is based on the observation that $A_b$ has rank $s$ if and only if the kernel of $A_b$ is a subspace of $E_b$ of dimension $m-s$. Informally, the PAX desingularization of $B(A, s)$ parametrizes all subspaces of $\ker A_b$ of dimension $m-s$ as $b$ varies over $B$.  We denote this variety by $Z_A$. This space embeds naturally in a Grassmannian bundle on $B$. 

The second desingularization of $B(A, s)$---the \emph{PAXY} model---is based on the fact that $A_b$ has rank at most $s$ if and only if the matrix $A_b$ factors into a product of an $n\times s$ matrix $Y_b$ with an $s \times m$ matrix $X_b$. Informally, the PAXY desingularization parametrizes such factorizations of $A_b$ up to the natural action of $\GL_s.$

The PAX and PAXY models were studied in the physics paper \cite{JKL}, in which the authors argue that the two resolutions are related by a two dimensional version of Seiberg duality. The goal of the current work is to interpret this duality in the context of Gromov--Witten theory.  We compute $I$-functions for the two models and prove that they lie on the respective Lagrangian cones.  We compare  the two $I$-functions and show that they are related by an explicit change of variables.

\subsection{The connection to quiver bundles}\label{ss:qb}
An important step in the argument is to relate the PAX and PAXY resolutions to quiver varieties.  We prove the following.

\begin{theorem}[Theorem \ref{thm.pax/paxy_mutation}] \label{thm.intro_mutation}
The PAX and PAXY models can each be constructed as critical loci of specified superpotentials on fiber bundles over $B$ with fibers given by quiver varieties.  The respective quiver bundles are related by a mutation.     
\end{theorem}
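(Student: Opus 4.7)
The plan is to realize each resolution as a critical locus of a carefully chosen superpotential on a quiver bundle over $B$ and then to identify an explicit cluster-theoretic mutation that interchanges the two presentations.

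For the PAX model, I would start from its description as the zero locus on the Grassmannian bundle $\Gr(m-s, E) \to B$ of the section of $S^\vee \otimes F$ induced by $A$, where $S$ is the tautological subbundle. The standard Koszul/Kn\"orrer trick realizes this zero locus as the critical locus of a linear superpotential $W_{\mathrm{PAX}}$ on the total space of the dual bundle $S \otimes F^\vee$. Unfolding the Grassmannian as a GIT quotient by $\GL_{m-s}$, this total space becomes the quotient of a quiver representation bundle whose fiber has a gauge node of dimension $m-s$, framing nodes of dimensions $m$ and $n$ (for $E$ and $F$), an arrow $X$ from the gauge node to the $E$-framing, an auxiliary ``quark'' arrow $P$ from the $F$-framing to the gauge node, and a fixed arrow encoding $A$. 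In these coordinates $W_{\mathrm{PAX}}$ becomes the cubic trace $\tr(P\,A\,X)$.

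For the PAXY model, I would carry out the analogous construction, obtaining a quiver bundle whose fiber quiver has a gauge node of dimension $s$, the same framing data, arrows $X$ and $Y$ through the gauge node, and a Lagrange-multiplier arrow in the opposite direction. The defining equation $YX = A$ arises as the critical equation of a cubic superpotential $W_{\mathrm{PAXY}}$, with GIT quotient by $\GL_s$.

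Finally, I would check the promised mutation. Both quivers share identical framing, and at the gauge nodes the arrows to and from the framing are reversed while the gauge dimension changes from $s$ to $\max(m,n) - s = m - s$ (using $m \geq n$), exactly matching the cluster mutation rule at a single node. The auxiliary ``quark'' and Lagrange-multiplier arrows, together with the fixed arrow encoding $A$, correspond to the composite arrows created by the mutation. The main technical obstacle will be to match the potentials under Derksen--Weyman--Zelevinsky mutation: one must verify that $W_{\mathrm{PAX}}$ and $W_{\mathrm{PAXY}}$ are right-equivalent after the standard addition of trivial two-cycles and reduction, so that the mutation is one of quivers with potential. Granting this, the two critical-locus presentations are interchanged by the mutation and the theorem follows.
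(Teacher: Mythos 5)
Your proposal is correct and follows essentially the same route as the paper: both resolutions are realized as critical loci of $\tr(PAX)$ and $\tr(P(A-YX))$ on quiver bundles over $B$ with gauge nodes of ranks $r=m-s$ and $s$ (with $P=0$ forced on the critical locus by regularity of the section, i.e.\ the Koszul/Lagrange-multiplier argument), and your mutation check at the gauge node (rank rule $\max(m,n)$ minus the old rank, arrow reversal, composite arrow $P\colon F\to E$) is the paper's. The only cosmetic difference is that you phrase the potential comparison via Derksen--Weyman--Zelevinsky right-equivalence, whereas the paper applies its own mutation rule in which the fixed dashed arrow $A$ is frozen, so the two-cycle term $\tr(PA)$ is kept rather than reduced away.
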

The key tool in the theorem above is to generalize the construction of quiver varieties to the relative setting, essentially by replacing the vector spaces in the construction by vector bundles. 
We preview the precise formulation below.  
For a brief introduction to quiver varieties, see Section~\ref{ss.quivers} below. 

To construct the PAX model, define a quiver bundle, i.e. a family of quiver varieties, over $B$ via the following quiver:  
	\begin{equation*}
		\begin{tikzpicture}
			\node[draw,
			circle,
			minimum size=0.6cm,
			] (gauge) at (0,0){r};
			
			\node [draw,
			minimum width=0.6cm,
			minimum height=0.6cm,
			left=1.2cm of gauge
			]  (frame-E) {$E$};
			
			\node [draw,
			minimum width=0.6cm,
			minimum height=0.6cm,
			above=1.2cm of gauge
			]  (frame-F) {$F$};
			
			\draw[-stealth] (gauge.west) -- (frame-E.east)
			node[midway,above]{$X$};
			
			\draw[-stealth] (frame-F.south) -| (gauge.north)
			node[near end,right]{$P$};
			
			\draw[-stealth,dashed] (frame-E.north) -- (frame-F.west)
			node[midway,left]{$A$};
		\end{tikzpicture}
	\end{equation*}
where $E$ and $F$ are vector bundles over the base $B$, the section $A\in \mathcal{H}om( E,  F)$ is a fixed map as in teh previous section, and $r = \op{rank}(E) - s$.  The  vector bundle associated to this quiver is $V = \mathcal{H}om(\cO_B \otimes \CC^r, E) \oplus \mathcal{H}om(F, \cO_B \otimes \CC^r)$, and the associated quiver bundle is the GIT quotient $V \sslash \GL_r.$  In the diagram above, $X$ and $P$ represent fiberwise coordinates on $V$.
The diagram also includes a dashed arrow, which is not standard. We use this notation to represent a fixed section $A\in \mathcal{H}om( E,  F)$, with the dashed line indicating that we are \emph{not} varying this section in moduli.

The cycle in the quiver above determines a superpotential 
\begin{equation*}
W_{PAX} = \tr (PAX): V \to \CC.
\end{equation*}
This function is gauge invariant under the $\GL_r$ action and therefore descends to a function on the quiver bundle.
The critical locus of this function in the GIT quotient $V \sslash \GL_r$, 
\[
Z_A := \mathrm{Crit}(W_{PAX}),
\]  
defines the PAX resolution.

Let us now perform a mutation of the above quiver at the gauged node.  After relabeling the fiber coordinates, we obtain the following quiver:
	
	\begin{equation*}
		\begin{tikzpicture}
			\node [draw,
			circle,
			minimum size=0.6cm,
			] (gauge) at (0,0){s};
			
			\node [draw,
			minimum width=0.6cm,
			minimum height=0.6cm,
			left=1.2cm of gauge
			]  (frame-E) {$E$};
			
			\node [draw,
			minimum width=0.6cm,
			minimum height=0.6cm,
			above=1.2cm of gauge
			]  (frame-F) {$F$};
			
			\draw[-stealth] (frame-E.east) -- (gauge.west)
			node[midway,above]{$X$};
			
			\draw[-stealth] (gauge.north) -| (frame-F.south)
			node[near end,right]{$Y$};
			
			\draw[-stealth] (frame-F.south west) -- (frame-E.north east)
			node[midway,right]{$P$};
			
			\draw[-stealth,dashed] (frame-E.north) -- (frame-F.west)
			node[midway,left]{$A$};
		\end{tikzpicture}
	\end{equation*}
The vector bundle associated to this quiver is $V' = \mathcal{H}om(\cO_B \otimes \CC^s, F) \oplus \mathcal{H}om(E, \cO_B \otimes \CC^s) \oplus \mathcal{H}om(F, E)$ and the associated quiver bundle is $V'\sslash_{\theta'} \GL_s$. Again the dotted arrow represents a fixed section $A\in \mathcal{H}om( E,  F)$.  Here we use the superpotential
	\begin{align*}
		W_{PAXY}= \tr(P(A-YX)): V' \to \CC,
	\end{align*}
which is gauge invariant under the action of $\GL_s$. 
The critical locus of the function in the GIT quotient $V'\sslash_{\theta'} \GL_s$,  
\[
\widehat{Z}_A:=\mathrm{Crit}(W_{PAXY}),
\] 
defines the PAXY model.

Following \cite{JKL}, we prove that in fact the PAX and PAXY models are isomorphic (see Proposition~\ref{p.PAXY_res}).  However, the two presentations of the resolutions as critical loci in different quiver bundles as described above yield two different $I$-functions as we explain below.

\subsection{Gromov--Witten theory of determinantal varieties}

To study the genus zero Gromov--Witten theory of the PAX and PAXY models, we define $I$-functions $I_{Z_A}(q, q_1, z)$ and $I_{\widehat Z_A}(q', q_1', z)$, via explicit modifications of the $J$-function $J_B(q, u, z)$ of the base $B$. These $I$-functions are obtained using the two quiver descriptions of the previous section.  The function $I_{Z_A}(q, q_1, z)$ arises from the PAX model, in which $Z_A$ is embedded in $\Gr_B(r, E).$  The function
 $I_{\widehat Z_A}(q', q_1', z)$ from the PAXY model embedding.

Although $Z_A$ and $\widehat Z_A$ are isomorphic, these two different embeddings yield \emph{distinct} $I$-functions.  From a physics perspective, this is because the two gauged linear sigma models (GLSMs) are different.  From a mathematical perspective, it is a consequence of the fact that
 relative quasimap invariants (see \cite{Oh, SupTse}) of the two GIT quotients have no direct connection with one another.

Using the Abelian/non-Abelian correspondence as formulated in \cite{CLS} together with the quantum Lefschetz theorem in \cite{CoatesQLHT} we prove in Proposition~\ref{p.I_PAX} that the PAX model $I$-function $I_{Z_A}(q, q_1, z)$ lies on the Lagrangian cone of $Z_A$.  We note that in the special case that the determinantal variety $B(A,s)$ is smooth, this yields an $I$-function for $B(A,s)$ itself.  The result may therefore be viewed as a generalization of the quantum Lefschetz theorem to the more general setting of determinantal varieties.  

We also verify that the PAXY model $I$-function $I_{\widehat Z_A}$ lies on the Lagrangian cone, but in this case the proof is more difficult.  Indeed the function $I_{\widehat Z_A}$ involves twisting by a non-concavex vector bundle, thus one cannot apply a quantum Lefschetz-type result as in the case of $I_{Z_A}$.  Interestingly, the result in this case becomes a corollary of duality between $I_{Z_A}$ and $I_{\widehat Z_A}$, which we describe in the next section.

\subsection{Seiberg-like duality for the PAX and PAXY resolution}

For quiver varieties related by mutation, there is a predicted duality relating the associated $I$-functions by an explicit change of variables.
This Seiberg-like duality was first explored in \cite{BPZ} and mathematically formulated in \cite{Ruan17} 
under the name \emph{mutation conjecture}. 
The conjecture was proven in \cite{Don, DW} for the simplest possible mutation---that between the Grassmannian and dual Grassmannian.  The result was generalized in \cite{Zha} and \cite{HZ} to the case of type $A_n$ quivers and type $D_4$ quivers, respectively.

In light of Theorem~\ref{thm.intro_mutation}, it is natural to expect that the $I$-functions $I_{Z_A}$ and $I_{\widehat{Z}_A}$ for the PAX and PAXY models are related to each other in an analogous way to that described in the mutation conjecture.  This is the main result of the paper.

\begin{theorem}[Theorem \ref{Thm.PAX/PAXY}] \label{thm.intro_main}

Let $B(A,s)$ be the determinantal variety, with PAX and PAXY resolutions $Z_A$ and $\widehat Z_A$.  
We have the following relations between the PAX and PAXY model $I$-functions. 
\begin{enumerate}
		\item When $m \geq n+2$, then
		\begin{align*}
			I_{\widehat{Z}_A}(q, q_1) = I_{Z_A}(q', q_1').
		\end{align*}
        under the change of variables 
    \begin{equation}\label{intro.paxpaxycov}
            q'_1=q_1\quad\text{ and }\qquad q'=q\cdot q_1^{\langle c_1(E), -\rangle}.
	 \end{equation}
		
		\item When $m = n+1$, then
		\begin{align*}
			I_{\widehat{Z}_A}(q, q_1) = e^{(-1)^r q_1/z} I_{Z_A}(q', q_1').
		\end{align*}
    under the change of variables \eqref{intro.paxpaxycov}.
		
		\item When $m = n$, then
		\begin{align*}
			I_{\widehat{Z}_A}(q, q_1) = (1+(-1)^{r} q_1)^{\Psi} I_{Z_A}(q', q_1').
		\end{align*}
    under the change of variables 
	\[
	q'_1=q_1\quad\text{ and }\qquad q'=\frac{q\cdot q_1^{\langle c_1(E), -\rangle}}{(1+(-1)^{r} q_1)^{ \langle c_1(E),-\rangle-\langle c_1(F), - \rangle}},
	\]
    where $\Psi=r+(c_1(F) - c_1(E))/z$. 
\end{enumerate}
\end{theorem}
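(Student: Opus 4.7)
The plan is to compare the two $I$-functions directly as formal power series, following the general strategy used for the mutation conjecture in the absolute setting \cite{DW, Zha, HZ}. First, using Proposition~\ref{p.I_PAX}, I would write $I_{Z_A}$ in explicit series form: Abelian/non-Abelian on the Grassmann bundle $\Gr_B(r,E)\to B$ produces a Weyl-antisymmetrized sum over $d \in \op{NE}(B)$ and $r$-tuples of non-negative degrees, to which one appends the quantum Lefschetz factor for the section of $F\otimes S^\vee$ cutting out $Z_A$. For $I_{\widehat{Z}_A}$ the analogous formula lives on $\Gr_B(s,F)\to B$, but with an additional twist from the non-convex $P$-field direction that contributes factors of the form $\prod_k(-\alpha + kz)$ carrying negative-index contributions.

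The second step implements the mutation from Theorem~\ref{thm.intro_mutation} at the level of summation variables. Since both resolutions parametrize essentially the same data (subspaces of $E$ of dimension $m-s$, equivalently subspaces of $F$ of dimension $s$ containing $\op{im}(A)$), the complementary indexing dictates a substitution sending each Abelianized PAX degree $d_1^{(i)}$ to a complementary PAXY degree, shifted by the pairing of the base curve class with $c_1(E)$. This shift is precisely what yields the transformation $q'=q\cdot q_1^{\langle c_1(E), - \rangle}$ in~\eqref{intro.paxpaxycov}. Standard Gamma-function identities---the finite analog of the reflection formula---then convert the negative-degree factors coming from the $P$-field into positive-degree factors, aligning the two series term by term.

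Third, the three cases correspond to the remainder that survives after this identification, which depends on the sign of $m - n - 2$ (the relevant Calabi--Yau-like condition). When $m \geq n+2$ the remainder is trivial and the bare equality of case (1) holds. When $m = n+1$ a single marginal contribution survives and resums to the exponential $e^{(-1)^r q_1/z}$. When $m = n$ an infinite family of contributions resums to the binomial $(1 + (-1)^r q_1)^{\Psi}$; the $z$-dependent exponent $\Psi = r + (c_1(F)-c_1(E))/z$ combines a combinatorial constant from the Weyl antisymmetrization with an additional twist by $c_1(F)-c_1(E)$ in the base direction, and the latter twist simultaneously forces the extra denominator $(1+(-1)^r q_1)^{\langle c_1(E)-c_1(F), - \rangle}$ in the formula for $q'$ in case (3), as the only way to make the boundary contributions consistent with the base $J_B(q,z)$ appearing on both sides.

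The main obstacle lies in the signs and the boundary resummations of cases (2) and (3). The factors $(-1)^r$ emerge from Weyl antisymmetrization when indices are moved between the two ``halves'' of the mutation, and in case (3) one must further verify that the prefactor $(1+(-1)^r q_1)^{\Psi}$ is a legitimate transformation of the Lagrangian cone: this uses Givental's characterization of the cone under $z$-dependent rescalings, together with the fact that $\Psi$ has the right degree structure in $z^{-1}$ and is built from classes in $H^{\leq 2}(B)$. Once the identity of $I$-functions is established, the statement that $I_{\widehat{Z}_A}$ itself lies on the Lagrangian cone of $\widehat{Z}_A \cong Z_A$ follows immediately from Proposition~\ref{p.I_PAX} and the invariance of the cone under the prescribed change of variables and multiplicative/exponential factors.
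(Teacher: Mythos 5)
Your overall architecture (explicit hypergeometric series, change of variables dictated by the mutation, exponential/binomial resummations in the borderline cases, and deducing cone membership for $\widehat Z_A$ afterwards) matches the paper in outline, but there is a genuine gap at the center of the argument, plus a setup error. First, the PAXY $I$-function does not live over $\Gr_B(s,F)$: the stability condition $\theta_-$ forces $X\colon E\to\CC^s$ to be fiberwise surjective, so $\widehat Z_A$ sits inside $\op{tot}(S_s\otimes\pi_s^*F)$ over the \emph{dual} Grassmann bundle $\Gr_B(s,E^\vee)$, and $N^{PAXY}_{\beta,\vec d}$ carries an additional factor coming from the $P$-field direction $\pi_s^*(E\otimes F^\vee)$. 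Starting from $\Gr_B(s,F)$, the explicit series you would write down is not the one that has to be compared with $I_{Z_A}$.

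Second, and more seriously, the claim that ``standard Gamma-function identities \ldots align the two series term by term'' is not a proof of the key step, and termwise alignment is false as a mechanism: one side is a sum over $r$-tuples $(d_1,\dots,d_r)$ and the other over $s$-tuples, so there is no termwise bijection; the identity holds only after summing all tuples with fixed total degree $a$, and for $m=n+1$ and $m=n$ only as a convolution with an exponential, respectively binomial, kernel. This is exactly the content of Proposition~\ref{p.pax/paxy}, which the paper obtains by (i) restricting to the $\torus$-fixed loci indexed by complementary subsets (Lemmas~\ref{lem:fix1} and \ref{lem:fix2}), where the Chern roots $y_j$, $w_j$ specialize to $\pm x_i$ and the elementary reflection-type rewriting of Lemmas~\ref{l.PAX_pullback} and \ref{l.PAXY_pullback} applies, and then (ii) invoking the nontrivial hypergeometric identities of Dong and Zhang (\cite{Don}; Lemma~A.2, Corollary~A.3, Lemma~A.4 of \cite{Zha}) under a suitable substitution. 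Your reflection-formula manipulations reproduce only step (i); the identities in step (ii) --- which are the actual heart of the theorem and the source of the $(-1)^r$ signs and of $\Psi$ --- are neither proven nor reduced to a citable result in your sketch. Relatedly, you never pass to fixed-point localization, yet without it (or an equivalent device) you are comparing symmetric-function-valued series on two different bundles, which is precisely where the comparison must be made precise. Finally, your last paragraph concerns Proposition~\ref{p.I_PAXY} rather than the stated theorem; the paper handles that step via the string and divisor equations applied to $J_B$, not via a general invariance of the cone under $z$-dependent rescalings.
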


The generalization from quiver varieties to quiver bundles described in Section~\ref{ss:qb} has implications beyond just the study of determinantal varieties, and suggests a natural generalization of Seiberg-like duality for quiver varieties to the case of families of quiver varieties over a base $B$.  

We explore this perspective in Section~\ref{s.generalized-Seiberg-duality} and prove such a duality for Grassmannian bundles and their duals in Theorem~\ref{Them.building_block}.  To illustrate the benefit of the relative perspective, we note that  Theorem~\ref{Them.building_block} already implies new cases of the mutation conjecture in the absolute case. See Remark~\ref{Rem:new case} below.

In Section~\ref{s.general_paxpaxy} we introduce determinantal varieties, the PAX and PAXY resolutions, and their realization as critical loci inside quiver bundles.  We then apply the methods from the basic mutation studied in Section~\ref{s.generalized-Seiberg-duality} to prove Theorem \ref{thm.intro_main}.

\section*{Acknowledgements}
We would like to thank Tom Coates, Sanghyeon Lee, Wendelin Lutz, Jeongseok Oh, Yongbin Ruan, Qaasim Shafi, Rachel Webb, and Yingchun Zhang  for the useful discussions. N. Priddis is supported by the Simons Foundation Travel Grant 586691.
M. Shoemaker is supported by the Simons Foundation Travel Grant 958189.
Y. Wen is supported by a KIAS Individual Grant (MG083902) at the Korea Institute for Advanced Study.

\section{Seiberg-like duality for Grassmannian bundles}\label{s.generalized-Seiberg-duality}

In this section we introduce quiver varieties and mutation of quivers, and describe a conjectural Seiberg-like duality relating the $I$-functions of mutated quivers.  We then generalize the discussion to the setting of quiver bundles over an arbitrary base $B$, and prove a duality statement for Grassmannian bundles and their duals.
The methods of this section will be used again in Section~\ref{s.general_paxpaxy} to relate the $I$-functions of the PAX and PAXY resolutions.

\subsection{Quivers and mutations}\label{ss.quivers}
A quiver is a directed graph $Q = (Q_{0}, Q_{1})$ with vertices $Q_{0}$ and edges $Q_{1}$.  The vertices are partitioned into \emph{framed} and \emph{gauge} vertices: $Q_{0} = Q_{\f} \coprod Q_{\g}$.  Given a quiver, we construct a GIT quotient as follows.  For each vertex $i \in Q_{0}$, assign a finite dimensional complex vector space $V_i$.  If $i \in Q_{\g}$ is a gauge vertex, then choose a reductive group $G_i \subseteq \GL(V_i)$.  Define
\[
V = \bigoplus_{e: i \to j \in Q_{1}} \Hom(V_i, V_j), \quad G = \prod_{i \in Q_{\g}} G_i.
\]
The group $G$ acts on $V$ by the balanced action, i.e. $(A,B)\cdot g=(Ag, g^{-1}B)$. For a given choice of character $\theta \in \Hom(G, \CC^*)$, we obtain a GIT quotient 
\[
Y = V \sslash_\theta G.\]
For every \emph{cycle} $c = i_0 \to i_1 \to \cdots \to i_k \to i_0$ in $Q$, there is an associated $G$-invariant function 
\[
f_c := \tr \left( A_{i_k \to i_0 } \circ A_{i_{k-1} \to i_k} \circ \cdots \circ A_{i_0 \to i_1} \right): V \to \CC
\]
where $A_{i_{j-1} \to i_j  } \in \Hom(V_{i_{j-1}}, V_{i_j})$. A function $\phi: \{\text{cycles $c$ in $Q$}\} \to \CC$ therefore defines a $G$-invariant function
\[
\sum_c \phi(c) \cdot f_c: V \to \CC,
\]
which descends to a function (called a \emph{superpotential}) $w: Y \to \CC$ on the GIT quotient. We obtain a Landau--Ginzburg model $(Y, w)$, which we call the quiver LG model.  If $Q$ has no cycles, then there is no superpotential, and $Y$ is a quiver variety. We will assume for simplicity that the critical locus 
\[
\op{Crit}(\phi): Y \to \CC
\]
is smooth, and we consider the Gromov--Witten theory of $\op{Crit}(\phi)$.  In the more general setting where $\op{Crit}(\phi)$ is not smooth, one could instead consider the GLSM theory of the LG model $(Y, w)$ (see e.g. \cite{FJR2, CFGKS, TX2, Shoe, FKim, CZ}).
	
Fix a quiver $Q$ with an assignment of vector spaces $V_i$ for each vertex as above.  Assume the associated gauge groups are $G_i = \GL(V_i)$ for each $i \in Q_{\g}$, and let $(Y,w)$ denote the quiver LG model.  Fix a gauge vertex $k \in Q_{\g}$.  The \emph{mutation} of $Q$ at gauge node $k$ is a new quiver $Q'$ with associated vector spaces constructed as follows:
\begin{enumerate}
	\item for each sequence of two arrows $i \to k \to j$ passing through $k$, add a new arrow $i \to j$, and reverse the direction of the arrows $i \to k$ and $k \to j$;
	\item set $N = \op{max}(N_{in}, N_{out}) - \dim(V_k)$ where $N_{in}$ (resp. $N_{out}$) is the sum $\sum_{i \in Q_{0}} n_i \dim(V_i)$ and $n_i$ is the number of incoming edges $i \to k$ (resp. outgoing edges $k \to i$). Replace the vector space $V_k$ with $\CC^N$, and $G_k = \GL(V_k)$ with $\GL_N$;
 \item if any 2-cycles arise in the first step, delete both edges of each 2-cycle;
	\item for each new cycle $c$ of length greater than 2 arising in the first step, add a generic multiple of $f_c$ to the superpotential.  
\end{enumerate}

This procedure yields a new LG model $(Y', w')$. It was observed in \cite{BPZ} that the two quivers and their associated LG models are related by a two-dimensional $\mathcal{N}=(2,2)$ version of Seiberg-like duality.  Mathematically, this suggests that the  generating functions of Gromov--Witten invariants of the respective critical loci (or, more generally, of GLSM invariants of the LG models) are related by a mutation of Novikov parameters.  This was described in \cite{Ruan17} as the mutation conjecture. In \cite{Don,DW}, Dong and the third named author  proved the mutation conjecture (for small $I$-functions) in genus zero for the Grassmannian and its dual Grassmannian.  This result has served as the ``basic building block'' for proving other instances of the duality. By leveraging this building block, Zhang in \cite{Zha} proved the conjecture for quivers of type $A_n$, and He-Zhang in \cite{HZ} proved the conjecture for star-shaped and type $D_4$ quivers. 
In this section, we generalize the methods of Zhang given in \cite{Zha} to prove a similar statement in the relative setting.
Such quiver bundles have also appeared in the study of the moduli space of Higgs bundles in \cite{AG03}.

\subsection{The basic relative quiver mutation} \label{s.twisted_Gr_bdl}
In this paper we propose generalizing the Seiberg-like duality described above to the relative setting by working over a general base $B$ and replacing vector spaces at framed nodes by vector bundles.
We will focus on the following special case.

 Let $B$ be a smooth projective variety, and let $E=\bigoplus_{i=1}^m L_i$ and $F=\bigoplus_{i=1}^n M_i$ be vector bundles on $B$ of rank $m$ and $n$, respectively. We will assume that $ m \geq n $ and that $E^{\vee}\otimes F$ is generated by global sections. 

Consider the following quiver bundle:
\begin{equation}\label{quiver1}
\begin{tikzpicture}
\node[draw,
    circle,
    minimum size=0.6cm,
] (gauge) at (0,0){r};

\node [draw,
    minimum width=0.6cm,
    minimum height=0.6cm,
    left=1.2cm of gauge
]  (frame-E) {$E$};

\node [draw,
    minimum width=0.6cm,
    minimum height=0.6cm,
    above=1.2cm of gauge
]  (frame-F) {$F$};

\draw[-stealth] (gauge.west) -- (frame-E.east)
    node[midway,above]{$X$};

\draw[-stealth] (frame-F.south) -| (gauge.north)
    node[near end,right]{$P$};
\end{tikzpicture}.
\end{equation}

In the notation of the previous section, the framed vertices (in $Q_{\f}$) are depicted by square nodes. There is a single gauge vertex (in $Q_{\g}$) depicted by a round node. The set of edges $Q_{1}$ contains the two arrows depicted in the diagram. However, different from the description in the previous section, instead of assigning vector spaces to each element in $Q_{\f}$, we assign the vector bundles $E$ and $F$. 

From this data, one can construct a GIT quotient, which will be a fiber bundle over the base $B$ whose fibers are quiver varieties. First, define the vector bundle 
\[
V = \mathcal{H}om( \cO_B \otimes \CC^r, E) \oplus \mathcal{H}om(F, \cO_B \otimes \CC^r).
\]
		
The action of $\GL_r$ on $\CC^r$ naturally induces an action on $V$.  On an open subset $U \subset B$ where $\cO_B$, $E$ and $F$ are trivialized, let $X$ and $P$ denote coordinates on $\mathcal{H}om( \cO_B \otimes \CC^r, E)$ and $\mathcal{H}om(F, \cO_B \otimes \CC^r)$, respectively.  Then the
$\GL_r$ action is locally given by
\[
(X,P)\cdot g := (X\cdot g, g^{-1}\cdot P).
\]
After choosing the character ${\theta_+}(g):=\det(g)$, we obtain the GIT quotient 
\begin{align*}
	Z:=V \sslash_{\theta_+} \GL_r = \op{tot} \left( \begin{tikzcd}
		S_r \otimes \pi_r^* F^\vee  \ar[d]  \\ 
		\Gr_B(r,E) 
	\end{tikzcd} \right). 
\end{align*}
Here $\Gr_B(r,E)$ denotes the Grassmannian bundle over the base $B$ of $r$-planes in the fibers of $E$. We will suppress the subscript $B$ from now on and simply denote this as $\Gr(r,E)$. The bundle $S_r$ denotes the tautological bundle on $\Gr(r,E)$, and the map $\pi_r:\Gr(r,E)\to B$ is the projection. 

There is a single mutation possible on the quiver in \eqref{quiver1} at the single gauged node.
After applying the mutation procedure, we obtain the following quiver bundle
\begin{equation*}
\begin{tikzpicture}
\node[draw,
    circle,
    minimum size=0.6cm,
] (gauge) at (0,0){s};

\node [draw,
    minimum width=0.6cm,
    minimum height=0.6cm,
    left=1.2cm of gauge
]  (frame-E) {$E$};

\node [draw,
    minimum width=0.6cm,
    minimum height=0.6cm,
    above=1.2cm of gauge
]  (frame-F) {$F$};

\draw[-stealth] (frame-E.east) -- (gauge.west)
    node[midway,above]{$X$};

\draw[-stealth] (gauge.north) -| (frame-F.south)
    node[near end,right]{$Y$};

\draw[-stealth] (frame-F.south west) -- (frame-E.north east)
    node[midway,left]{$P$};
\end{tikzpicture}.
\end{equation*}
where $s=m-r$.

To form the GIT quotient for this quiver,  take \[V'=( E\otimes F^\vee)\oplus \mathcal{H}om( \cO_B \otimes \CC^s, F) \oplus \mathcal{H}om(E, \cO_B \otimes \CC^s),
\] 
with an action of $\GL_s$ induced by the  action on $\CC^s$.  On an open subset where $E $ and $F $ are trivialized, let $P$, $Y$, and $X$ denote coordinates on each of the respective summands of $V'$.
In these coordinates, the action of $\GL_s$ is given by
\[
(P,Y,X)\cdot g= (P, Y\cdot g, g^{-1}\cdot X).
\]
Denote by $\pi_s: \Gr(s, E ^\vee) \rightarrow B$ the projection. If we choose the negative character ${\theta_-}(g)=(\det(g))^{-1}$, we obtain the GIT quotient
\begin{align*}
	Z^{\op{tot}}:= V' \sslash_{\theta_-} \GL_s = \op{tot} \left( \begin{tikzcd}
		( S_s \otimes \pi_s^* F)  \oplus \pi_s^* ( E\otimes F^\vee)  \ar[d]  \\ 
		\Gr(s, E^\vee) 
	\end{tikzcd} \right). 
\end{align*}  

Following the construction in Subsection \ref{ss.quivers}, since there is a loop in the quiver, the GIT quotient $Z^{\op{tot}}$ comes with a nontrivial superpotential, given by
\begin{align*}
	W=\tr(PYX).
\end{align*}
Define $\widehat{Z}$ to be the critical locus of $W$:
\[
\widehat{Z} := \operatorname{Crit}(W).
\]

In this case, the critical locus of $W$ is defined by the equations $PY=0$, $YX=0$, and $XP=0$. Since $X$ is full rank, it must be the case that $Y=0$, so we have 
\[
\widehat{Z}=\{XP=0\}\subset \op{tot}( \pi_s^*(E\otimes F^\vee )).
\]
	
Thus $\widehat{Z}$ is the zero locus of the regular section $XP$ of the vector bundle $\pi_{\op{tot}}^*( S_s^\vee \otimes F^\vee )$, where $\pi_{\op{tot}}: \op{tot}( \pi_s^* ( E\otimes F^\vee ) ) \rightarrow \Gr(s, E^\vee)$ is the projection. 

\begin{lemma}\label{lem:iso_basic_mutation}
    Under Grassmannian duality, we have a canonical isomorphism $Z \cong \widehat{Z}$.
\end{lemma}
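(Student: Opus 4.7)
The plan is to invoke the classical Grassmannian duality $\Gr(r,E) \cong \Gr(s,E^\vee)$ (sending a rank-$r$ subbundle $W \subset E$ to its annihilator $W^\perp \subset E^\vee$), and then to match the two total spaces by unpacking the tautological exact sequence.

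First I would recall that under the duality, the tautological subbundle $S_r$ on $\Gr(r,E)$ corresponds to $Q_s^\vee$ on $\Gr(s,E^\vee)$, where $0 \to S_s \to \pi_s^* E^\vee \to Q_s \to 0$ is the tautological sequence. Indeed, the fiber of $Q_s^\vee$ over $[W^\perp]$ is $(E_b^\vee/W^\perp)^\vee = W$, which agrees with the fiber of $S_r$. So the total space
\[
Z = \op{tot}\bigl( S_r \otimes \pi_r^* F^\vee \bigr) \longrightarrow \Gr(r,E)
\]
is identified with $\op{tot}\bigl( Q_s^\vee \otimes \pi_s^* F^\vee \bigr) \to \Gr(s,E^\vee)$.

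Next I would dualize the tautological sequence and tensor with $F^\vee$ to obtain
\begin{equation*}
0 \to Q_s^\vee \otimes \pi_s^* F^\vee \to \pi_s^*(E \otimes F^\vee) \to S_s^\vee \otimes \pi_s^* F^\vee \to 0.
\end{equation*}
The key point to check is that the right-hand map of this sequence agrees tautologically with the map $P \mapsto XP$ appearing in the construction of $\widehat{Z}$. Tracing through the GIT description, $X$ descends to the universal surjection $\pi_s^* E \twoheadrightarrow S_s^\vee$ on $\Gr(s,E^\vee)$; tensoring with $F^\vee$ and postcomposing with evaluation at $P \in E \otimes F^\vee$ gives precisely the surjection above. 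Once this identification is made, the total space of the kernel bundle $Q_s^\vee \otimes \pi_s^* F^\vee$ inside the total space of $\pi_s^*(E \otimes F^\vee)$ is exactly the zero locus $\{XP=0\}$, which by the paragraph preceding the lemma is $\widehat{Z}$.

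Combining the two steps yields the canonical isomorphism $Z \cong \widehat{Z}$. The only nontrivial step is the identification of the descended map of $X$ with the natural surjection $\pi_s^* E \twoheadrightarrow S_s^\vee$; this is a bookkeeping verification about how the stable locus of $V' \sslash_{\theta_-} \GL_s$ presents the Grassmannian bundle, but once it is pinned down the rest is formal. I would present the argument in that order: state the Grassmannian duality and the identification $S_r \cong Q_s^\vee$, rewrite $Z$, then exhibit $\widehat{Z}$ as the total space of the kernel in the tensored tautological sequence.
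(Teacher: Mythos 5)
Your proposal is correct and follows essentially the same route as the paper's proof: both dualize the tautological sequence on $\Gr(s,E^\vee)$, tensor with $F^\vee$, identify $XP$ with the composition of the tautological section of $\pi_s^*(E\otimes F^\vee)$ with the projection onto $S_s^\vee\otimes \pi_s^*F^\vee$, and then use the identification $Q_s^\vee \cong S_r$ under Grassmannian duality. The only difference is presentational (you state the duality first rather than last), so there is nothing to add.
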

\begin{proof}
    Over $\pi_{\op{tot}}: \op{tot}( \pi_s^* ( E\otimes F^\vee ) ) \rightarrow \Gr(s, E^\vee)$, we have the tautological exact sequence
    \[
    0 \rightarrow \pi_{\tot}^* S_s \rightarrow \pi_{\tot}^*\pi_s^* E^\vee \rightarrow \pi_{\tot}^* Q_s \rightarrow 0.
    \]
    Here, we use the index $s$ in $Q_s$ to emphasize the dependence of the Grassmannian $\Gr(s, E^\vee)$. 
    
    Dualizing and tensoring with $\pi_{\tot}^*\pi_s^* F^\vee$, we have
    \[
    0 \rightarrow \pi_{\tot}^*(Q_s^\vee \otimes \pi_s^*F^\vee) \rightarrow \pi_{\tot}^*\pi_s^*(E \otimes F^\vee) \rightarrow \pi_{\tot}^*(S_s^\vee \otimes \pi_s^*F^\vee) \rightarrow 0.
    \]
    Notice that $XP$ is the composition of the canonical section of $\pi_{\tot}^*\pi_s^*(E \otimes F^\vee)$ with the second to last arrow above. By Grassmannian duality, we have a canonical isomorphism $\Gr(s, E^\vee) \cong \Gr(r, E)$ and under this isomorphism the vector bundles $Q_s^\vee$ and $S_r$ are identified.  The conclusion follows.
\end{proof}

\subsection{The quasimap $I$-functions}\label{ss:Ifunctions}
In this section we compute $I$-functions for $Z$ and $\widehat{Z}$ and prove that they lie on the respective Lagrangian cones.  That is, they determine the genus-zero Gromov--Witten theory of $Z$ and $\widehat{Z}$.  More conceptually, these generating functions will be defined in terms of twisted relative quasimap invariants in the sense of \cite{Oh}.

We will work with equivariant $I$-functions, with respect to the following torus action. Let $(\CC^*)^m$ act on $E$ and $(\CC^*)^n$ act on $F$ fiberwise. This induces an action of the torus $\torus:= (\CC^*)^m \times (\CC^*)^n$ on both $Z$ and $\widehat{Z}$. For the latter space, note that the section $XP \in \Gamma( \op{tot}( \pi_s^* (E\otimes F^\vee ) ), \pi_{\op{tot}}^* (S_s^\vee\otimes \pi_s^* F^\vee))$ is $\torus$-equivariant.

Let $x_i=c_1^{\torus}(L^\vee_i)$ and $\hat x_i:=\pi_r^*(x_i)$, let $z_j=c^{\torus}_1(M_j^\vee)$ and $\hat z_j=\pi_r^*(z_j)$, and let $\{ y_i \}_{i=1}^r$ be the Chern roots of $S^\vee_r$. 

Let $J_B(q, u, z)$ denote the $J$-function of the base $B$, where $u$ is a coordinate of $H^*(B)$.
Let $J_\beta = J_\beta(u, z)$ denote the coefficient of $q^\beta$ in $J_B$, namely:
\[
J_B(q, u, z) =\sum_\beta q^\beta J_\beta.
\]
The $I$-functions for both $Z$ and $\widehat{Z}$ will be obtained by hypergeometric modifications of the coefficients of $J_B$. For notational simplicity, we will often suppress the $u$ variable. 

For $Z$, we define $I_{Z} =  I_{Z}(q, q_1, z)$ to be the following modification of $J_B$:
\begin{align} \label{IX}
	I_{Z}(q, q_1, z) = I_{\Gr(r,E)}^{ e_{\torus}^{-1}(S_r \otimes \pi_r^* F^\vee)}= \sum_\beta q^\beta \sum_d q_1^d\sum _{\substack{(d_1,\dots,d_r)\\ \sum d_i=d}}N_{\beta,\vec{d}} \ J_\beta, 
\end{align}
where
\begin{align*}
	N_{\beta,\vec{d}}=&\prod_{\substack{i,j=1\\ i\neq j}}^r 
	\frac{\prod_{h\leq d_j-d_i}(y_j-y_i+hz)}{\prod_{h\leq 0}(y_j-y_i+hz)} \cdot \prod_{j=1}^r \prod_{i=1}^{m} \frac{\prod_{h\leq 0} (y_j-\hat x_i+hz)}{\prod_{h\leq d_j-\beta\cdot x_i} (y_j-\hat x_i+hz)}\\
	\cdot & \prod_{j=1}^r\prod_{k=1}^{n}\frac{\prod_{h\leq 0}(-y_j+\hat z_k+hz)}{\prod_{h\leq -d_j+\beta\cdot z_k}(-y_j+\hat z_k+hz)}.
\end{align*}
Here the expression $\prod_{h \leq b}( - )$ is used to denote the product $\prod_{h=- \infty}^b( - )$, and  we denote
\[
    \beta \cdot x_i := \langle c_1(L_i^\vee), \beta \rangle, \quad \beta \cdot z_k := \langle c_1(M^\vee_k), \beta\rangle.
\]

\begin{remark}\label{finitesum1}
    We note that for a fixed $\beta \in H_2(B)$ 
    the product 
\[
\prod_{i=1}^{m} \frac{\prod_{h\leq 0} (y_j-\hat x_i+hz)}{\prod_{h\leq d_j-\beta\cdot x_i} (y_j-\hat x_i+hz)}
\]
vanishes whenever $d_j< \min_i (\beta\cdot x_i)$ for some $j \leq r$.
This implies that: 
    \begin{enumerate}
        \item for a fixed $\beta \in H_2(B)$, the integers $d \in \ZZ$ which contribute to the second summation is bounded below; and
        \item for a fixed $d \in \ZZ$, there are only finitely many $(d_1, \ldots, d_r) \in \ZZ^r$ such that $\sum d_i = d$ for which $N_{\beta,\vec{d}}$ is nonzero.
    \end{enumerate}
\end{remark}

We have an identification of  $\QQ[H_2(\Gr(r, E))]$ with $\QQ[H_2(B)][q_1, q_1^{-1}]$ 
given by
\begin{equation}\label{Novmap} q^{\tilde \beta} \mapsto q^{(\pi_r)_*(\tilde \beta)} q_1^{\langle c_1(S_r^\vee), \tilde \beta \rangle}.
\end{equation}
Using the map \eqref{Novmap} and Remark~\ref{finitesum1},
we may identify the Novikov ring of $\Gr(r, E)$, denoted 
$\QQ[\![\op{Eff}(\Gr(r, E))]\!]$, with a subring of 
$\QQ[\![\op{Eff}(B)]\!](\!(q_1)\!).$
Under this identification,
$I_{Z}$ lies in the $\torus$-equivariant Givental space $H^*(Z)(\!(z^{-1})\!)$.

\begin{proposition} \label{p.I-function-Gr}
 $I_{Z}$ lies in the Givental's ($\torus$-equivariant) Lagrangian cone of $Z$.
\end{proposition}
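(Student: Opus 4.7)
The plan is to identify $I_Z$ as an $e_\torus^{-1}(S_r \otimes \pi_r^* F^\vee)$-twisted $I$-function on the Grassmannian bundle $\Gr(r,E) \to B$, and then establish two facts in sequence: first, that the untwisted piece lies on the Lagrangian cone of $\Gr(r,E)$ via the Abelian/non-Abelian correspondence of \cite{CLS}; second, that the hypergeometric twist accounting for the total-space direction preserves membership in the cone via the quantum Lefschetz theorem of \cite{CoatesQLHT}. Since the $\torus$-equivariant Gromov--Witten theory of the total space $Z$ is equivalent to the Gromov--Witten theory of $\Gr(r,E)$ twisted by $e_\torus^{-1}(S_r \otimes \pi_r^* F^\vee)$, this will suffice.

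For the first step, I would exploit the splitting $E = \bigoplus L_i$ to abelianize the GIT presentation $V \sslash_{\theta_+} \GL_r$ by replacing $\GL_r$ with its maximal torus. The abelian quotient is the $r$-fold fiber product $\PP(E)^{\times_B r}$, a toric bundle over $B$. A small $I$-function for this toric bundle is obtained directly from $J_B$ via Brown's mirror theorem for toric bundles, and has the shape of $J_B$ multiplied by the hypergeometric factor
\[
\prod_{j=1}^r \prod_{i=1}^{m} \frac{\prod_{h\leq 0}(y_j - \hat x_i + hz)}{\prod_{h\leq d_j - \beta\cdot x_i}(y_j - \hat x_i + hz)}.
\]
Applying the Abelian/non-Abelian correspondence of \cite{CLS} introduces the Weyl factor indexed by $i \neq j$ and yields a small $I$-function on the Lagrangian cone of $\Gr(r,E)$, matching the $F = 0$ specialization of \eqref{IX}.

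For the second step, since $F = \bigoplus M_k$ splits, the vector bundle $S_r \otimes \pi_r^* F^\vee$ decomposes under the abelianization into line bundles with equivariant first Chern classes $-y_j + \hat z_k$. Applying \cite{CoatesQLHT} with the multiplicative characteristic class $e_\torus^{-1}$ produces exactly the remaining hypergeometric factor
\[
\prod_{j=1}^r \prod_{k=1}^n \frac{\prod_{h\leq 0}(-y_j + \hat z_k + hz)}{\prod_{h\leq -d_j + \beta\cdot z_k}(-y_j + \hat z_k + hz)},
\]
completing the match with \eqref{IX}. The finiteness observations of Remark \ref{finitesum1} guarantee that the series lies in the Givental space over the Novikov completion identified via \eqref{Novmap}, so that the cone statement is meaningful.

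The main obstacle will be executing the Abelian/non-Abelian correspondence carefully in the relative equivariant setting, where the abelianized tautological bundle has nontrivial cohomology over the base and one must track how Brown's toric-bundle input interacts with the $S_r$-Chern-root symmetrization. A secondary subtlety is that the twist by $e_\torus^{-1}$ is non-concavex, so one must use the $\torus$-equivariance throughout to render the twisted Euler class invertible in localized cohomology; once all Chern roots $-y_j + \hat z_k$ are invertible after localization, the standard quantum Lefschetz argument of \cite{CoatesQLHT} applies without modification.
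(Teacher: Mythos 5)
Your framing agrees with the paper's up to a point: the $\torus$-equivariant theory of $Z$ is the $\bigl(S_r\otimes\pi_r^*F^\vee,\,e^{-1}_{\torus}\bigr)$-twisted theory of $\Gr(r,E)$, and the abelianization is the toric bundle $\Gr_T=\mathcal{H}om(\CC^r,E)\sslash T$ (your $\PP(E)^{\times_B r}$), with Brown's $I$-function as the input over $B$. But your order of operations diverges at the decisive step, and that step has a genuine gap. You first place the untwisted Grassmannian-bundle $I$-function on the cone of $\Gr(r,E)$ and then propose to add the $e^{-1}_{\torus}(S_r\otimes\pi_r^*F^\vee)$ twist by the quantum Lefschetz theorem of \cite{CoatesQLHT}. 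That theorem, and the standard concavex quantum Lefschetz statements, carry positivity hypotheses: Euler-class twists by convex bundles, or inverse-Euler twists by concave bundles. The bundle $S_r\otimes\pi_r^*F^\vee$ is neither: for instance, on a line in a fiber of $\Gr(r,E)\to B$ the pullback of $S_r$ contains trivial summands, so $H^0$ of the pullback of $S_r\otimes\pi_r^*F^\vee$ does not vanish, and there is no sign control on the degrees of its ``roots'' $-y_j+\hat z_k$ over all effective classes. Your fallback --- that $\torus$-equivariance makes $e_{\torus}(S_r\otimes\pi_r^*F^\vee)$ invertible after localization, so ``the standard quantum Lefschetz argument applies without modification'' --- does not close this: invertibility makes the twisted theory and its cone well defined, but the assertion that the naive hypergeometric modification of a cone point lies on the twisted cone is precisely the part of quantum Lefschetz whose known proofs use convexity/concavity, and it can fail without such hypotheses. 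So as written, step two is unsupported by the cited result.

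The paper never separates the twist from the abelian/non-abelian step. It twists Brown's $I$-function for $\Gr_T$ by $\bigl((S_r\otimes\pi_r^*F^\vee)_T,\,e^{-1}_{\torus}\bigr)$ already at the abelian level, adds the Weyl/root-bundle twist $(\Phi,\mathbf{c}')$, projects by the map $p$ of Givental spaces and takes the $\lambda\mapsto 0$ limit, and invokes the twisted abelian/non-abelian correspondence \cite[Theorem~5.11]{CLS}, extended to non-Euler multiplicative classes and to sums of line bundles via \cite[Remark~3.7]{CLS}; the factor $e^{\bm{t}/z}$ with $q_1\mapsto q_1e^t$ is inserted so the output is the Givental--Martin modification, and the conclusion is that $I_Z$ lies on the $\bigl(S_r\otimes\pi_r^*F^\vee,\,e^{-1}_{\torus}\bigr)$-twisted cone of $\Gr(r,E)$, which by definition equals the $\torus$-equivariant cone of $Z$. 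The reference \cite{CoatesQLHT} enters the paper only for genuine Euler-class twists of convex type (e.g.\ for $I_{\widehat Z}$ and the PAX model), not here. To repair your argument you would either need to prove an equivariant quantum Lefschetz statement for this non-concave inverse-Euler twist --- which is not in the literature you cite --- or restructure as the paper does, building the twist into the abelianized $I$-function before symmetrizing.
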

\begin{proof}
	
Let $\bm{t} = t c_1^{\torus}(S_r^\vee) \in H^2_{\torus}(\Gr(r, E))$.
Define $I_{Z}(q, q_1, t, z)$ to be 
\[
I_{Z}(q, q_1, t,z):= e^{\bm{t}/z}I_{Z}(q, q_1e^t, z).
\]

We will show that  $I_{Z}(q, q_1, t, z)$ lies on the Lagrangian cone.  Note that $I_{Z}(q, q_1, t, z)$ is a hypergeometric modification of the $I$-function of ${\Gr(r, E)}$ obtained by Oh in \cite{Oh}. In fact, as described in \cite{CLS}, it is the \emph{Givental--Martin} modification of a twisted $I$-function on an associated Abelian GIT quotient, so the proposition essentially follows from \cite[Theorem 1.11]{CLS}.

More precisely, the proposition follows by applying the argument of Theorem~5.11 of \cite{CLS}, which provides an Abelian/non-Abelian correspondence for twisted theories over a flag bundle.  Because that theorem is not stated in exactly the generality that we need here, we summarize the argument below and explain the necessary adjustments to the case at hand.

Let $T$ be a maximal torus in $\GL_r$, and let $\Gr_T$ denote the GIT quotient $\mathcal{H}om(\CC^r, E) \sslash T$. This is a toric bundle over $B$. (The reader should take care not to confuse $T$ with the torus $\torus$, as defined earlier.) Let $I_{\Gr_T}$ denote the Brown $I$-function for $\Gr_T$ as in \cite{Brown}, and let 
$I_{\Gr_T}^{e^{-1}_{\torus}( (S_r \otimes \pi_r^* F^\vee)_T)}$ denote the corresponding 
$((S_r \otimes \pi_r^* F^\vee)_T, e^{-1}_{\torus}(-))$-twisted $I$-function, where $(S_r \otimes \pi_r^* F^\vee)_T \to \Gr_T$ is the vector bundle on the Abelian quotient $\Gr_T$ corresponding to $(S_r \otimes \pi_r^* F^\vee) \to \Gr(r, E)$.  

Following the notation and discussion in Section~5.2 of \cite{CLS} (in particular Theorem~5.11), we apply a further twist to $I_{\Gr_T}^{e^{-1}_{\torus}( (S_r \otimes \pi_r^* F^\vee)_T)}$ by $(\Phi, \mathbf c')$ where $\Phi \to \Gr_T$ is the root bundle with respect to $\GL_r$ and $\mathbf c'$ is the $\CC^*$-equivariant Euler class with respect to the $\CC^*$-action scaling the fibers of $\Phi$.  Denote the equivariant parameter of this $\CC^*$ action by $\lambda$. Projecting via the map on Givental spaces defined in \cite{CLS}
\[
p: \mathcal{H}_{\Gr_T}^W \to \mathcal{H}_{\Gr(r, E)}
\]
and taking the nonequivariant limit $\lambda \mapsto 0$, we obtain $I_{Z}(q, q_1, t, z)$.  

By the proof of \cite[Theorem~5.11]{CLS}, this function lies in the $((S_r \otimes \pi_r^* F^\vee), e^{-1}_{\torus})$-twisted Lagrangian cone of $\Gr(r, E)$.  While the theorem in \emph{loc. cit.} is only stated for the twisted theory $( L, e_{\CC^*}(-))$, where $L$ is a line bundle and $e_{\CC^*}(-)$ is an equivariant Euler class, the result holds for other multiplicative classes as well by \cite[Remark 3.7]{CLS}. The further extension to the case that $L$ is replaced by the sum of line bundles also follows the same argument.

By definition, the $((S_r \otimes \pi_r^*  F^\vee), e^{-1}_{\torus})$-twisted Lagrangian cone $\Gr(r, E)$ is equal to the $\torus$-equivariant Lagrangian cone of $Z$.

\end{proof}

Next, we compute an $I$-function for $\widehat{Z}$. By abuse of notation, let $\hat x_i:=\pi_s^*(x_i)$, $\hat z_j=\pi_s^*(z_j)$, and let $w_1,\dots, w_s$ be the Chern roots of $S_s^\vee$, then we define 
\begin{align}
	I_{\widehat{Z}}(q ', q_1', z) = I_{\Gr(s, E^\vee)}^{e_{\torus}( S_s^\vee \otimes \pi_s^* F^\vee)e^{-1}_{\torus}(\pi_s^*(E\otimes F^\vee))} =\sum_\beta {q '}^\beta \sum_d q_1'^d\sum _{\substack{(d_1,\dots,d_s)\\ \sum d_i=d}}\widehat{N}_{\beta,\vec{d}} \cdot J_\beta,
\end{align}
where
\begin{align*}
	\widehat{N}_{\beta,\vec{d}}=
    &\prod_{\substack{i,j=1\\ i\neq j}}^s 
	\frac{\prod_{h\leq d_j-d_i}(w_j-w_i+hz)}{\prod_{h\leq 0}(w_j-w_i+hz)} \cdot \prod_{j=1}^s \prod_{i=1}^{m} \frac{\prod_{h\leq 0} (w_j+\hat x_i+hz)}{\prod_{h\leq d_j+\beta\cdot x_i} (w_j+\hat x_i+hz)}\\
	\cdot & \prod_{k=1}^{n}\prod_{j=1}^s\frac{\prod_{h\leq d_j+\beta\cdot z_k}(w_j+\hat z_k+hz)}{\prod_{h\leq 0}(w_j+\hat z_k+hz)}\cdot \prod_{i=1}^m\prod_{k=1}^{n} \frac{\prod_{h\leq 0}(-\hat x_i+\hat z_k+hz)}{\prod_{h\leq -\beta\cdot x_i+\beta\cdot z_k}(-\hat x_i+\hat z_k+hz)}.
\end{align*}
Analogously to the case of $I_Z$, the modification factor $\widehat{N}_{\beta,\vec{d}}$ vanishes whenever 
\[
d_j < \min_i(- \beta \cdot x_i)
\]
for some $j$.
We have the identification of 
$\QQ[H_2(\Gr(s, E^{\vee}))]$ 
with $\QQ[H_2(B)][q_1', (q_1 ')^{-1}]$ given by
\begin{equation}\label{Novmap2} 
{q}^{\tilde \beta} \mapsto ({q '})^{\pi_*(\tilde \beta)} ({q_1'})^{\langle c_1(S_s^\vee), \tilde \beta \rangle},
\end{equation}
which allows us to identify the Novikov ring $\QQ[\![ \op{Eff}(\Gr(s, E^\vee))]\!]$ with a subring of 
\[
\QQ[\![\op{Eff}(B)]\!](\!(q_1 ')\!).
\]

Under this identification,
$I_{\widehat{Z}}$ lies in the $\torus$-equivariant Givental space $H^*(\widehat{Z})(\!(z^{-1})\!)$.

Applying the same argument as in Proposition~\ref{p.I-function-Gr}, together with the quantum Lefschetz theorem as given in \cite{CoatesQLHT}, we have the following.

\begin{proposition}\label{p.I-function-Gr-dual} 
	$I_{\widehat{Z}}$ lies in the $\torus$-equivariant Lagrangian cone of $\widehat{Z}$.
\end{proposition}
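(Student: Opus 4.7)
The plan is to mimic the proof of Proposition~\ref{p.I-function-Gr} but with an additional twist, and then to convert the resulting twisted Lagrangian cone into the actual Lagrangian cone of $\widehat Z$ via a quantum-Lefschetz step. As observed in Section~\ref{s.twisted_Gr_bdl}, $\widehat Z$ is the zero locus of the regular section $XP$ of $\pi_{\op{tot}}^*(S_s^\vee \otimes \pi_s^* F^\vee)$ inside the total space $\op{tot}(\pi_s^*(E \otimes F^\vee))$, and this two-step geometric description dictates the structure of the proof.

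The first step is to show that the inverse-Euler-twisted $I$-function
\begin{equation*}
I^{(1)} := I_{\Gr(s, E^\vee)}^{e^{-1}_{\torus}(\pi_s^*(E \otimes F^\vee))}
\end{equation*}
lies on the $\torus$-equivariant Lagrangian cone of the total space $\op{tot}(\pi_s^*(E \otimes F^\vee))$. For this I would repeat the argument of Proposition~\ref{p.I-function-Gr} verbatim, now with $\Gr(s, E^\vee)$ in place of $\Gr(r,E)$, the Abelian model $\mathcal{H}om(\CC^s, E^\vee)\sslash T$ in place of $\mathcal{H}om(\CC^r,E)\sslash T$, and the twist bundle $\pi_s^*(E\otimes F^\vee)$ in place of $S_r \otimes \pi_r^* F^\vee$. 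The appeal to \cite[Theorem~5.11]{CLS} and \cite[Remark~3.7]{CLS} goes through unchanged (again extending to sums of line bundles as in the proof of Proposition~\ref{p.I-function-Gr}), giving a $(\pi_s^*(E\otimes F^\vee), e^{-1}_{\torus})$-twisted cone statement for $\Gr(s, E^\vee)$, which by the standard identification is the Lagrangian cone of $\op{tot}(\pi_s^*(E\otimes F^\vee))$.

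The second step is to apply the twisted quantum Lefschetz theorem of \cite{CoatesQLHT} to the bundle $\pi_{\op{tot}}^*(S_s^\vee \otimes \pi_s^* F^\vee)$ on $\op{tot}(\pi_s^*(E \otimes F^\vee))$, cut down by the section $XP$. The resulting hypergeometric modification of $I^{(1)}$ contributes precisely the additional factor
\begin{equation*}
\prod_{j=1}^s \prod_{k=1}^n \frac{\prod_{h\leq d_j+\beta\cdot z_k}(w_j+\hat z_k+hz)}{\prod_{h\leq 0}(w_j+\hat z_k+hz)}
\end{equation*}
present in $\widehat N_{\beta, \vec d}$ but absent from $I^{(1)}$, so that one recovers $I_{\widehat Z}(q', q_1', z)$ exactly. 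By \cite{CoatesQLHT}, the output of QLefschetz lies on the $\torus$-equivariant Lagrangian cone of $\widehat Z$, as desired.

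The main obstacle I foresee is verifying the hypotheses of \cite{CoatesQLHT} in this setting: the bundle $\pi_{\op{tot}}^*(S_s^\vee \otimes \pi_s^* F^\vee)$ is not concavex in any obvious global sense, and the ambient space is non-compact. Both difficulties are handled by working $\torus$-equivariantly, since the torus $\torus = (\CC^*)^m \times (\CC^*)^n$ acts with proper fixed loci and the section $XP$ is equivariant, so the equivariant QLefschetz formalism applies without requiring global convexity. One must also check that the combined hypergeometric products are well-defined in the Givental space; this follows from the analogue of Remark~\ref{finitesum1} in the dual setting, which bounds the $\vec d$-summation below for each fixed $\beta$. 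Once these technical points are settled, the result is a direct combination of the Abelian/non-Abelian correspondence and equivariant quantum Lefschetz.
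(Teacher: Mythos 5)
Your proposal takes essentially the same route as the paper: the paper's proof of Proposition~\ref{p.I-function-Gr-dual} is precisely ``the same argument as Proposition~\ref{p.I-function-Gr} (the Abelian/non-Abelian correspondence of \cite{CLS} for twisted theories over the Grassmannian bundle) combined with the quantum Lefschetz theorem of \cite{CoatesQLHT},'' which is exactly your two-step decomposition into the $e^{-1}_{\torus}(\pi_s^*(E\otimes F^\vee))$-twist for the total space and the $e_{\torus}(S_s^\vee\otimes\pi_s^*F^\vee)$-twist for the zero locus of $XP$, with the hypergeometric factors correctly matched to $\widehat{N}_{\beta,\vec d}$. Your discussion of why equivariance obviates convexity/concavity issues is more explicit than the paper, which handles this point only implicitly (cf.\ the remark following Proposition~\ref{p.I_PAXY}).
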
 

Hence, we have two $I$-functions $I_Z$ and $I_{\widehat{Z}}$, each lying on the $\torus$-equivariant Lagrangian cones of the respective quiver bundles. As $Z$ and $\widehat{Z}$ are related by mutation, the respective $I$-functions should be related by Seiberg-like duality. We prove this in the next section.

\subsection{The proof of basic Seiberg-like duality} \label{s.proof:Gr}
In this section we prove a correspondence between $I_Z$ and $I_{\widehat{Z}}$ stated in Theorem~\ref{Them.building_block} below.

By Lemma~\ref{lem:iso_basic_mutation}, the isomorphism $Z \cong \widehat{Z}$ induces an isomorphism of their cohomologies. Recall the  
torus actions on $E$ and $F$ induce a $\torus$-action on both $Z$ and $\widehat{Z}$. Under the Grassmannian duality, there is a natural bijection between the fixed loci.

To describe the fixed locus, we introduce the following notation. For a positive integer $k$, let $[k]=\{1,\dots,k\}$. For $r\leq m$, let $\mathbf{i}=\{i_1<i_2<\dots< i_r\}\subseteq [m]$ denote a subset of size $r$. Furthermore, let $\mathbf{K}_r=\{\mathbf{i}\subset [m]\}$ denote the set of all such subsets.

\begin{lemma}\label{lem:fix1}
	The fixed locus of the $\torus$-action on $Z$ is a disjoint union $\bigsqcup_{\mathbf{i}}F_{\mathbf{i}}$ indexed by the set $\mathbf{K}_r$. Let $\iota_{F_{\mathbf{i}}}:F_{\mathbf{i}}\hookrightarrow Z$ denote the corresponding component. Then $F_{\mathbf{i}}\cong B$ and $ S_r|_{F_{\mathbf{i}}}\cong \bigoplus_{k=1}^r  L_{i_k}$. 
\end{lemma}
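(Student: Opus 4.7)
The plan is to reduce the analysis to two independent pieces: the fixed locus of $\torus$ on the Grassmannian bundle $\Gr(r,E)$, and the fixed locus of $\torus$ in the fibers of the vector bundle $Z \to \Gr(r,E)$.

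First I would use the description
\[
Z \;=\; \op{tot}\bigl(S_r\otimes \pi_r^* F^\vee \to \Gr(r,E)\bigr)
\]
and note that the $\torus$-action comes entirely from the fiberwise action on $E$ and $F$; consequently, the projection $Z \to B$ is $\torus$-equivariant with respect to the trivial action on $B$. Hence the fixed locus $Z^\torus$ maps to $B$ and can be analyzed fiberwise.

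Second, over $b \in B$, the fiber of $Z \to B$ is the total space of $S_r \otimes F_b^\vee$ over $\Gr(r, E_b)$. Since $E = \bigoplus_{i=1}^m L_i$ is split, $(\CC^*)^m$ acts on $E_b = \bigoplus_i (L_i)_b$ with a distinct nonzero weight on each line summand, so the fixed points of this action in $\Gr(r, E_b)$ are exactly the coordinate $r$-planes $W_\mathbf{i} := \bigoplus_{k=1}^r (L_{i_k})_b$, indexed by $\mathbf{i} \in \mathbf{K}_r$. At such a $W_\mathbf{i}$, the fiber of $S_r \otimes \pi_r^* F^\vee$ is
\[
W_\mathbf{i} \otimes F_b^\vee \;=\; \bigoplus_{k=1}^r\bigoplus_{l=1}^n (L_{i_k})_b \otimes (M_l)_b^\vee,
\]
and the $\torus$-weight on the summand indexed by $(k,l)$ is $e_{i_k} - f_l$, where $e_i, f_l$ denote the standard characters of $(\CC^*)^m$ and $(\CC^*)^n$. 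All these weights are nonzero, so the only $\torus$-fixed point in the fiber is the origin.

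Combining the two steps, $Z^\torus$ lies inside the zero section $\Gr(r,E) \subset Z$, and within it consists precisely of the fiberwise coordinate planes. Thus
\[
Z^\torus \;=\; \bigsqcup_{\mathbf{i} \in \mathbf{K}_r} F_\mathbf{i},
\]
where $F_\mathbf{i}$ is the image of the global section $B \to \Gr(r,E)$ sending $b \mapsto \bigoplus_{k=1}^r (L_{i_k})_b$. This section is an isomorphism onto its image, so $F_\mathbf{i} \cong B$, and $S_r|_{F_\mathbf{i}} \cong \bigoplus_{k=1}^r L_{i_k}$ by construction. The only mild subtlety is to verify that the section is well defined globally, which is immediate since each $L_{i_k}$ is a global summand of $E$; apart from this, the argument is a routine analysis of the equivariant geometry.
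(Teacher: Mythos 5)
Your proposal is correct and follows essentially the same route as the paper: the action preserves fibers over $B$, the fixed points in each Grassmannian fiber are the coordinate $r$-planes $\bigoplus_{k=1}^r L_{i_k}$, and the nonzero weights $e_{i_k}-f_l$ on the fibers of $S_r\otimes\pi_r^*F^\vee$ force the fixed locus into the zero section, giving sections of $\Gr(r,E)\to B$ isomorphic to $B$ with the stated restriction of $S_r$. Your write-up simply makes explicit the fiberwise weight argument that the paper's one-line proof leaves implicit.
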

\begin{proof}
Since the $\torus$-action on $Z=\tot(S_r\otimes\pi_r^*F^\vee \rightarrow \Gr(r,E))$ is induced from the 
torus actions on $E$ and $F$, the $\torus$-fixed loci are the fixed points in $\Gr(r,E)$ represented by the standard rank $r$ subbundles, namely those of the form $\oplus_{k=1}^rL_{i_k} \hookrightarrow E$. Thus each component of the fixed locus is isomorphic to $B$ and parametrized by $\mathbf{K}_r$.

\end{proof}

\begin{corollary} \label{cor:fix1}
With the same notation as before, namely $x_i=c_1^{\torus}(L^\vee_i)$ and $y_1,\dots, y_r$ are the equivariant Chern roots of $S_r^\vee$, we have $\iota_{F_{\mathbf{i}}}^*(y_k)=x_{i_k}$. 
\end{corollary}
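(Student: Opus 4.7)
The plan is to read off the restricted Chern roots directly from the splitting of $S_r^\vee$ on the fixed locus produced by Lemma~\ref{lem:fix1}. Concretely, Lemma~\ref{lem:fix1} gives a $\torus$-equivariant isomorphism $S_r|_{F_{\mathbf{i}}} \cong \bigoplus_{k=1}^r L_{i_k}$, where the torus $\torus = (\CC^*)^m \times (\CC^*)^n$ acts on each summand $L_{i_k}$ through its $i_k$-th factor. Dualizing, we obtain a $\torus$-equivariant splitting
\[
S_r^\vee|_{F_{\mathbf{i}}} \;\cong\; \bigoplus_{k=1}^r L_{i_k}^\vee.
\]

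The first step is to recall that, by the (equivariant) splitting principle, whenever a vector bundle $W$ of rank $r$ splits equivariantly as a direct sum of line bundles $W \cong \bigoplus_{k=1}^r N_k$, the Chern roots of $W$ may be identified with the equivariant first Chern classes $c_1^{\torus}(N_k)$ — this is the only natural choice compatible with the identity $c^{\torus}(W) = \prod_{k=1}^r (1 + c_1^{\torus}(N_k))$. Thus, after fixing the ordering convention $\mathbf{i} = \{i_1 < i_2 < \cdots < i_r\}$ so that the $k$-th Chern root of $S_r^\vee$ corresponds on $F_{\mathbf{i}}$ to the $k$-th summand of the splitting above, we obtain
\[
\iota_{F_{\mathbf{i}}}^*(y_k) \;=\; c_1^{\torus}\bigl(L_{i_k}^\vee\bigr) \;=\; x_{i_k},
\]
which is the desired identity.

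The only genuinely mild subtlety (and what I expect to be the only thing worth spelling out) is the choice of ordering: the Chern roots of $S_r^\vee$ are \emph{a priori} only defined up to permutation, so the equality $\iota_{F_{\mathbf{i}}}^*(y_k) = x_{i_k}$ must be read as a statement about symmetric functions, or as an honest equality after fixing, once and for all, a labeling of the $y_k$'s on each fixed component $F_{\mathbf{i}}$ that matches the natural ordering of $\mathbf{i}$. Beyond this bookkeeping, there is no obstacle — the corollary is an immediate consequence of Lemma~\ref{lem:fix1} together with the equivariant splitting principle, and I would state it in roughly the two sentences above.
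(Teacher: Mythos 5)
Your proposal is correct and follows exactly the route the paper intends: the corollary is an immediate consequence of the equivariant splitting $S_r|_{F_{\mathbf{i}}}\cong\bigoplus_{k=1}^r L_{i_k}$ from Lemma~\ref{lem:fix1}, dualized and read off via the splitting principle. Your remark on fixing the labeling of the Chern roots on each fixed component is sensible bookkeeping but does not change the argument.
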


For $\widehat{Z}$, we also have an action of $\torus$, and the fixed locus has a similar description, but this time as subsets of $[m]$ of length $s$. We denote the set of all such subsets by $\mathbf{K}_s$. 

\begin{lemma}\label{lem:fix2}
The fixed locus of the action of $\torus$ on $\widehat{Z}$ is a disjoint union $\bigsqcup_{\mathbf{j}}F_{\mathbf{j}}$ indexed by the set $\mathbf{K}_s$. Let $\iota_{F_{\mathbf{j}}}:F_{\mathbf{j}}\hookrightarrow \Gr(s,E^\vee)$ denote the corresponding component. Then $F_{\mathbf{j}}\cong B$ and $ S_s|_{F_{\mathbf{j}}}\cong \bigoplus_{k=1}^s L^\vee_{j_k}$. 
\end{lemma}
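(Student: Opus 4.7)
The plan is to reduce the computation to the Grassmannian bundle $\Gr(s, E^\vee)$ by first identifying the $\torus$-fixed locus inside the ambient total space and then invoking the same argument as in Lemma~\ref{lem:fix1}. Recall from the discussion preceding Lemma~\ref{lem:iso_basic_mutation} that the equation $Y = 0$ is forced on the critical locus of $W = \tr(PYX)$, so
\[
\widehat{Z} = \{XP = 0\} \subset \op{tot}\bigl(\pi_s^*(E \otimes F^\vee)\bigr).
\]

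The first step is to locate the $\torus$-fixed locus in $\op{tot}(\pi_s^*(E \otimes F^\vee))$. Decompose the fibers as $E \otimes F^\vee \cong \bigoplus_{i,k} L_i \otimes M_k^\vee$. Since $\torus = (\CC^*)^m \times (\CC^*)^n$ acts on $L_i$ via the $i$-th coordinate of the first factor and on $M_k^\vee$ via the inverse of the $k$-th coordinate of the second factor, each summand carries a nonzero $\torus$-weight. Hence the only $\torus$-fixed vector in each fiber is the origin, and the $\torus$-fixed locus of $\op{tot}(\pi_s^*(E \otimes F^\vee))$ coincides with the image of the zero section $\Gr(s, E^\vee) \hookrightarrow \op{tot}(\pi_s^*(E \otimes F^\vee))$. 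Because the zero section clearly satisfies $XP = 0$, it lies in $\widehat{Z}$, and so the $\torus$-fixed locus of $\widehat{Z}$ is identified with the $\torus$-fixed locus of $\Gr(s, E^\vee)$.

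The second step is to describe the $\torus$-fixed locus of $\Gr(s, E^\vee)$. This is completely parallel to Lemma~\ref{lem:fix1}, applied now to $E^\vee = \bigoplus_{i=1}^m L_i^\vee$ in place of $E$ and to rank $s$ in place of $r$. Fiberwise over a point $b \in B$, the $\torus$-fixed $s$-planes in $E^\vee_b$ are precisely the coordinate subspaces $\bigoplus_{k=1}^s L^\vee_{j_k}\bigr|_b$ indexed by subsets $\mathbf{j} = \{j_1 < \cdots < j_s\} \in \mathbf{K}_s$. These assemble into globally defined rank-$s$ subbundles $\bigoplus_{k=1}^s L^\vee_{j_k} \hookrightarrow E^\vee$, so the fixed components $F_{\mathbf{j}}$ are indexed by $\mathbf{K}_s$ and each is isomorphic to $B$. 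By construction, the restriction of the tautological subbundle $S_s$ along $\iota_{F_{\mathbf{j}}}$ is $\bigoplus_{k=1}^s L^\vee_{j_k}$.

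I do not anticipate any real obstacle: the only point that requires verification is that every weight of the $\torus$-representation $L_i \otimes M_k^\vee$ is nonzero, which holds automatically because these weights come from distinct factors of $\torus$. Everything else follows by mimicking the proof of Lemma~\ref{lem:fix1} with $E$ replaced by $E^\vee$.
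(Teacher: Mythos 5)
Your proof is correct and follows essentially the same route as the paper: the paper invokes the identification $\widehat{Z}\cong\op{tot}(Q_s^\vee\otimes\pi_s^*F^\vee\to\Gr(s,E^\vee))$ from Lemma~\ref{lem:iso_basic_mutation} and then argues as in Lemma~\ref{lem:fix1}, while you work in the ambient space $\op{tot}(\pi_s^*(E\otimes F^\vee))$ and use that every fiber weight of $L_i\otimes M_k^\vee$ is nonzero --- both reduce the statement to the coordinate subbundles $\bigoplus_{k=1}^s L_{j_k}^\vee\subset E^\vee$ giving the fixed components of $\Gr(s,E^\vee)$. One small wording correction: the $\torus$-fixed locus of $\op{tot}(\pi_s^*(E\otimes F^\vee))$ is the zero section \emph{over the fixed locus of} $\Gr(s,E^\vee)$, not the entire zero section (the base Grassmannian bundle carries a nontrivial $\torus$-action), but this corrected statement is precisely what your final identification uses, so the argument stands.
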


\begin{proof}
From the proof of Lemma \ref{lem:iso_basic_mutation}, we know 
\[
\widehat{Z}=\tot(Q_s^\vee \otimes \pi_s^*F^\vee \rightarrow \Gr(s, E^\vee)).
\]
The proof then follows similar to the proof of Lemma~\ref{lem:fix1}.

\end{proof}

\begin{corollary} \label{cor:fix2}
If $w_1,\dots, w_s$ are the equivariant Chern roots of $S_s^\vee$, then $\iota^*_{F_{\mathbf{j}}}(w_k)=-x_{j_k}$
\end{corollary}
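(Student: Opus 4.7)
The plan is to derive this directly from Lemma~\ref{lem:fix2} by tracking how the tautological subbundle $S_s$ decomposes on each fixed component and then dualizing. The whole statement is essentially a bookkeeping exercise in equivariant Chern classes once the fixed locus is understood, so no real obstacle is expected; the only subtlety is a sign coming from dualization.

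First, I would start from the conclusion of Lemma~\ref{lem:fix2}, namely the isomorphism
\[
S_s\big|_{F_{\mathbf{j}}} \;\cong\; \bigoplus_{k=1}^{s} L^\vee_{j_k}
\]
of $\torus$-equivariant vector bundles. Dualizing gives
\[
S_s^\vee\big|_{F_{\mathbf{j}}} \;\cong\; \bigoplus_{k=1}^{s} L_{j_k},
\]
so on $F_{\mathbf{j}}$ the bundle $S_s^\vee$ splits into an equivariant direct sum of line bundles with an explicit ordering determined by $\mathbf{j}=\{j_1<\dots<j_s\}$.

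Next, I would invoke the splitting principle: the Chern roots $w_1,\dots,w_s$ of $S_s^\vee$ are characterized, after pulling back along an appropriate flag bundle, by the property that on a split locus they recover the equivariant first Chern classes of the line-bundle summands. Applied to $F_{\mathbf{j}}$, this identifies (after suitably ordering the roots)
\[
\iota^*_{F_{\mathbf{j}}}(w_k) \;=\; c_1^{\torus}\bigl(L_{j_k}\bigr).
\]
Finally, using the definition $x_i := c_1^{\torus}(L_i^\vee) = -c_1^{\torus}(L_i)$, I would conclude
\[
\iota^*_{F_{\mathbf{j}}}(w_k) \;=\; -\,x_{j_k},
\]
which is precisely the claim. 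The computation is entirely parallel to Corollary~\ref{cor:fix1}; the only difference is that here the tautological bundle sits inside $E^\vee$ rather than $E$, which is what introduces the overall sign.
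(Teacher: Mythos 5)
Your proposal is correct and is exactly the argument the paper intends: the corollary is stated without proof as an immediate consequence of Lemma~\ref{lem:fix2}, obtained by dualizing $S_s|_{F_{\mathbf{j}}} \cong \bigoplus_{k=1}^s L^\vee_{j_k}$ and using the convention $x_i = c_1^{\torus}(L_i^\vee)$, just as you do. Nothing further is needed.
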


Using the descriptions of the Lemmas~\ref{lem:fix1} and \ref{lem:fix2}, we have the following natural bijection of fixed loci under the Grassmannian duality
\[
	\mathbf{K}_r \Longleftrightarrow \mathbf{K}_s
\]
obtained by taking complements
\[
\mathbf{i}\longleftrightarrow \mathbf{i}^{\complement}.
\]

More preciously, if $F_{\mathbf{i}}\subset Z$ is a component of the fixed locus corresponding to the set $\mathbf{i}$, by abuse of notation, we denote by $\widehat{F}_{\mathbf{i}} \subseteq \widehat{Z}$ the component corresponding to the complement $\mathbf{i}^{\complement}$. Then we have a correspondence
\[
F_{\mathbf{i}} \longleftrightarrow \widehat{F}_{\mathbf{i}}.
\]

In order to prove Seiberg-like duality, we will show the correspondence of $I$-functions on each of the fixed point sets. To simplify notation, we will focus on the fixed locus $F_\mathbf{0}$ corresponding to the set $\mathbf{0}=\{1,\dots,r\}\subseteq [m]$. Then $\widehat{F}_{\mathbf{0}}$ corresponds with the set $\mathbf{0}^{\complement}=\{r+1,r+2,\dots r+s\}\subset [m]$, and we recall that $r+s=m$. 

The first step of the correspondence is to rewrite $i_{F_\mathbf{0}}^*(N_{\beta,\vec{d}})$ in a more useful form.
\begin{lemma} \label{lem:pullback1}
	For $1\leq j\leq r$, we set $a_j=d_j-\beta\cdot  x_j$, then we have
	\begin{align*}
		\iota_{F_\mathbf{0}}^*(N_{\beta,\vec{d}})=&\prod_{\substack{i,j=1\\ i\neq j}}^r 
		\frac{\prod_{h\leq a_j-a_i}(x_j-x_i+(\beta\cdot x_j-\beta\cdot x_i+h)z)}{\prod_{h\leq 0}(x_j-x_i+(\beta\cdot x_j-\beta\cdot  x_i+h)z)}\\
		\cdot & \prod_{j=1}^r  \frac{\prod_{k=1}^{n} \prod_{h=0}^{a_j-1} (z_k-x_j +(\beta\cdot z_k-\beta\cdot x_j - h) z)}{\prod_{i=1}^{m} \prod_{h=1}^{a_j} (x_j-x_i+(\beta\cdot x_j-\beta\cdot  x_i+h)z)}\\
		\cdot & \prod_{j=1}^r\prod_{i=1}^s\frac{\prod_{h\leq 0}(x_j-x_{r+i}+h z)}{\prod_{h\leq \beta\cdot x_j-\beta\cdot x_{r+i}}(x_j-x_{r+i}+h z)}\\
		\cdot & \prod_{j=1}^r\prod_{k=1}^{n} \frac{\prod_{h\leq 0}(z_k-x_j+h z)}{\prod_{h\leq -\beta\cdot x_j+\beta\cdot z_k}(z_k-x_j+h z)}
	\end{align*}
\end{lemma}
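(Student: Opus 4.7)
The plan is to carry out a direct computation with two ingredients: the fixed-point restriction from Corollary~\ref{cor:fix1} and a single Pochhammer-splitting identity applied repeatedly. First I would apply $\iota_{F_\mathbf{0}}^*$ to $N_{\beta,\vec d}$: by Corollary~\ref{cor:fix1} this sends $y_j \mapsto x_j$, and since $\hat x_i = \pi_r^* x_i$ and $\hat z_k = \pi_r^* z_k$ restrict to $x_i$ and $z_k$ on $F_\mathbf{0} \cong B$, every Chern root in $N_{\beta,\vec d}$ is replaced by its $B$-counterpart. Next, I would substitute $d_j = a_j + e_j$ throughout (writing $e_i := \beta \cdot x_i$ for brevity).

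The core step is reorganizing the middle factor $\prod_{j=1}^{r}\prod_{i=1}^{m}\frac{\prod_{h\leq 0}(x_j - x_i + hz)}{\prod_{h \leq a_j + e_j - e_i}(x_j - x_i + hz)}$. I would split the $i$-product as $\{1,\dots,r\} \sqcup \{r+1,\dots,m\}$ and use the formal identity
\[
\frac{\prod_{h\leq 0}(v+hz)}{\prod_{h\leq a_j + e_j - e_i}(v+hz)} \;=\; \frac{\prod_{h\leq 0}(v+hz)}{\prod_{h\leq e_j - e_i}(v+hz)} \cdot \frac{1}{\prod_{h=1}^{a_j}\bigl(v + (e_j - e_i + h)z\bigr)}
\]
with $v = x_j - x_i$. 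Collecting the rightmost finite Pochhammer factors over all $i \in [m]$ yields the denominator of the second factor in the lemma statement (the diagonal $i=j$ contributing $a_j!\,z^{a_j}$). The infinite-ratio remainders split as: (i) $i \in \{r+1,\dots,m\}$, which is the third factor of the lemma after reindexing $i \mapsto r+i$; (ii) $i=j$, which is trivially $1$; and (iii) $i \neq j$ with $i,j \in \{1,\dots,r\}$, which combines with the first factor of $N_{\beta,\vec d}$—after the shift $h \mapsto h + e_j - e_i$—to give the first factor of the lemma.

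For the last original factor, $\prod_{j,k}\frac{\prod_{h\leq 0}(z_k - x_j + hz)}{\prod_{h \leq \beta\cdot z_k - d_j}(z_k - x_j + hz)}$, I would perform an analogous splitting at the intermediate upper limit $\beta\cdot z_k - e_j$: the resulting infinite-ratio piece is the fourth factor of the lemma, and the finite-product remainder, after the reindexing $h \mapsto \beta\cdot z_k - e_j - h$, matches the numerator of the second factor. Reassembling the four pieces gives the stated formula. The argument involves no substantive mathematical obstacle; the main challenge will be bookkeeping the shifts and the ratio identities consistently.
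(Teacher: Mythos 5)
Your proposal is correct and follows essentially the same route as the paper's proof: restrict via Corollary~\ref{cor:fix1}, substitute $d_j = a_j + \beta\cdot x_j$, and split each infinite-ratio factor at the intermediate upper limit ($\beta\cdot x_j - \beta\cdot x_i$, resp.\ $\beta\cdot z_k - \beta\cdot x_j$) before reindexing, with the cases $i\le r$, $i=j$, $i>r$ handled exactly as in the paper. The only detail worth adding is the paper's one-line observation that $\iota_{F_\mathbf{0}}^*(N_{\beta,\vec d})$ vanishes when some $a_j<0$, which justifies assuming $a_j\ge 0$ so that your finite Pochhammer splittings $\prod_{h=1}^{a_j}$ and $\prod_{h=0}^{a_j-1}$ make sense.
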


\begin{proof}
	By Corollary \ref{cor:fix1}, we have 
	\begin{align}
		\iota_{F_\mathbf{0}}^*(N_{\beta,\vec{d}})=&\prod_{\substack{i,j=1\\ i\neq j}}^r \frac{\prod_{h\leq d_j-d_i}(x_j-x_i+h z)}{\prod_{h\leq 0}(x_j-x_i+h z)} \cdot \prod_{j=1}^r \prod_{i=1}^m \frac{\prod_{h\leq 0} (x_j- x_i+hz)}{\prod_{h\leq d_j-\beta\cdot x_i} (x_j- x_i+hz)} \label{eq:Mfirstline}\\
		\cdot & \prod_{j=1}^r \prod_{k=1}^{n} \frac{\prod_{h\leq 0}(-x_j+z_k+hz)} {\prod_{h\leq -d_j+\beta\cdot z_k}(-x_j+z_k+hz)}. \label{eq:Msecondline}
	\end{align}
	Let $a_j=d_j-\beta\cdot x_j$, and notice that if $a_j< 0$, the second factor in the previous expression vanishes. So we replace $d_j$ with $a_j-\beta\cdot x_j$, and rewrite the previous expression. 
	
Consider the first factor of \eqref{eq:Mfirstline}. We can reindex the product with $h=\beta\cdot x_j - \beta\cdot x_i+h'$ to obtain
	\[
	\frac{\prod_{h\leq d_j-d_i}(x_j-x_i+h z)}{\prod_{h\leq 0}(x_j-x_i+h z)} =\frac{\prod_{h'\leq a_j-a_i}(x_j-x_i+(\beta\cdot x_j - \beta\cdot x_i +h') z)}{\prod_{h'\leq \beta\cdot x_i - \beta\cdot x_j}(x_j-x_i+(\beta\cdot x_j - \beta\cdot x_i + h') z)}.
	\]
	For $1\leq i \leq r$ with $i \neq j$, we do the same with the second factor to obtain 
	\begin{equation}\label{eq:MsecondFactor}
		\frac{\prod_{h\leq 0} (x_j- x_i+hz)}{\prod_{h\leq d_j-\beta\cdot x_i} (x_j- x_i+hz)}=
		\frac{\prod_{h'\leq \beta\cdot x_i - \beta\cdot x_j}(x_j-x_i+(\beta\cdot x_j - \beta\cdot x_i + h') z)}{\prod_{h'\leq a_j}(x_j-x_i+(\beta\cdot x_j - \beta\cdot x_i + h') z)}.    
	\end{equation}
	Combining the two, we obtain the contribution
	\[
	\frac{\prod_{h'\leq a_j-a_i}(x_j-x_i+(\beta\cdot x_j - \beta\cdot x_i +h') z)}{\prod_{h'\leq 0}(x_j-x_i+(\beta\cdot x_j - \beta\cdot x_i + h') z)}\cdot 
	\frac{1}{\prod_{h'=1}^{a_j}(x_j-x_i+(\beta\cdot x_j - \beta\cdot x_i + h') z)}.
	\]
	From our computation in \eqref{eq:MsecondFactor}, we see that when $i=j$, we obtain the contribution
	\[
	\frac{1}{\prod_{h'=1}^{a_j}(x_j-x_i+(\beta\cdot x_j - \beta\cdot x_i + h') z)}\Big{|}_{i=j}.
	\]
	
	Returning to the second factor of \eqref{eq:Mfirstline}, consider now the case when $i>r$. Then the second factor can be split into factors 
	\begin{align*}
		&\frac{\prod_{h\leq 0} (x_j- x_i+hz)} {\prod_{h\leq \beta\cdot x_j - \beta\cdot  x_i} (x_j- x_i+hz)} \cdot \frac{\prod_{h\leq \beta\cdot x_j - \beta\cdot x_i} (x_j- x_i+hz)}{\prod_{h\leq d_j-\beta\cdot x_i} (x_j- x_i+hz)} \\
		=& \frac{\prod_{h\leq 0} (x_j- x_i+hz)} {\prod_{h\leq \beta\cdot  x_j - \beta\cdot  x_i} (x_j- x_i+hz)}\cdot \frac{\prod_{h'\leq 0}(x_j-x_i+(\beta\cdot x_j - \beta\cdot x_i + h') z)}{\prod_{h'\leq a_j}(x_j-x_i+(\beta\cdot x_j - \beta\cdot x_i + h') z)}. 
	\end{align*}

    Recall $s=m-r$, so we replace $i$ with $r+k$ for $1\leq k\leq s$, and obtain the following contribution:
	\[
	\frac{\prod_{h\leq 0} (x_j- x_{r+k}+hz)}{\prod_{h\leq \beta\cdot x_j - \beta\cdot  x_{r+k}} (x_j- x_{r+k}+hz)}\cdot \frac{1}{\prod_{h'=1}^{a_j}(x_j-x_{r+k}+(\beta\cdot  x_j - \beta\cdot x_{r+k} + h') z)}
	\]
	
	Finally, the third factor in \eqref{eq:Msecondline}, we can rewrite as 
	\begin{align*}
		&\frac{\prod_{h\leq 0}(-x_j+z_k+hz)} {\prod_{h\leq -\beta\cdot x_j+\beta\cdot z_k}(-x_j+z_k+hz)}\cdot \frac{\prod_{h\leq -\beta\cdot x_j+\beta\cdot  z_k}(-x_j+z_k+hz)} {\prod_{h\leq -d_j+\beta\cdot z_k}(-x_j+z_k+hz)}\\
		=& \frac{\prod_{h\leq 0}(-x_j+z_k+hz)} {\prod_{h\leq -\beta\cdot x_j+\beta\cdot z_k}(-x_j+z_k+hz)}\cdot \frac{\prod_{h'\leq 0}(-x_j+z_k+(-\beta\cdot x_j+\beta\cdot z_k+h')z)} {\prod_{h'\leq -a_j}(-x_j+z_k+(-\beta\cdot x_j+\beta\cdot z_k +h') z)}\\
		=& \frac{\prod_{h\leq 0}(-x_j+z_k+hz)} {\prod_{h\leq -\beta\cdot x_j+\beta\cdot  z_k}(-x_j+z_k+hz)}\cdot \prod_{h'=0}^{a_j-1}(-x_j+z_k+(-\beta\cdot x_j+\beta\cdot z_k-h')z)
	\end{align*}
where in the second line, we used the substitution $h=-\beta\cdot x_j + \beta\cdot  z_k+h'$, 
and in the third line, since $a_j>0$ all factors appear in the numerator, so we substituted $h'$ for $-h'$. 
	
Putting it all together, we conclude. Notice we have combined all terms of the form $\prod_{h'=1}^{a_j}(x_j-x_i+(\beta\cdot x_j - \beta\cdot x_i + h) z)$ into one single product. 
\end{proof}

On the other side, we consider the corresponding fixed locus $ \widehat{F}_\mathbf{0} \subseteq \Gr(s,E^\vee)$ corresponding to the set $\mathbf{0}^\complement=\{r+1, \dots, m\}\subseteq [m]$. Then we can pull back $\widehat{N}_{\beta, \vec{d}}$ to obtain the following. 

\begin{lemma} \label{lem:pullback2}
For $1\leq j\leq s$, we set $a_j=d_j + \beta\cdot x_{r+j}$, then we have
\begin{align*}
\iota_{ \widehat{F}_\mathbf{0}}^*(\widehat{N}_{\beta,\vec{d}})=&\prod_{\substack{i,j=1\\ i\neq j}}^s 
\frac{\prod_{h\leq a_j-a_i}(x_{r+i}-x_{r+j}+ (\beta\cdot x_{r+i}-\beta\cdot  x_{r+j}+h)z)}{\prod_{h\leq 0}(x_{r+i}-x_{r+j}+ (\beta\cdot x_{r+i}-\beta\cdot  x_{r+j}+h)z)}\\
\cdot & \prod_{j=1}^s\frac{\prod_{k=1}^n\prod_{h=1}^{a_j}(-x_{r+j}+z_k+(\beta\cdot  z_k-\beta\cdot x_{r+j}+h) z)}{\prod_{i=1}^m \prod_{h=1}^{a_j}(-x_{r+j}+x_i+(\beta\cdot x_i-\beta\cdot x_{r+j}+h) z)}\\
\cdot & \prod_{i=1}^r \prod_{j=1}^s\frac{\prod_{h\leq 0}(x_i-x_{r+j}+h z)}{\prod_{h\leq \beta\cdot x_i-\beta\cdot x_{r+j}}(x_i-x_{r+j}+h z)}\\
\cdot & \prod_{i=1}^r\prod_{k=1}^n\frac{\prod_{h\leq 0}(z_k-x_i+h z)}{\prod_{h\leq -\beta\cdot x_i+\beta\cdot  z_k}(z_k-x_i+h z)}
\end{align*}
\end{lemma}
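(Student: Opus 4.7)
The approach is to mirror, line by line, the proof of Lemma \ref{lem:pullback1}, exploiting the near-symmetry between the two situations. By Corollary \ref{cor:fix2}, the pullback $\iota_{\widehat F_\mathbf{0}}^*$ sends $w_k$ to $-x_{r+k}$, so compared to Lemma \ref{lem:pullback1} (where $y_k \mapsto x_k$) we pick up both a sign flip and an index shift $k \mapsto r+k$. The substitution $a_j = d_j + \beta\cdot x_{r+j}$ plays the same role as $a_j = d_j - \beta\cdot x_j$ did before: the pullback of the modification factor vanishes unless $a_j \geq 0$, so we may assume this throughout.

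First, I would substitute $w_k \mapsto -x_{r+k}$ into each of the four products defining $\widehat N_{\beta,\vec d}$. In the second product (over $i \in [m]$) I would split the index $i$ into the ranges $\{1,\ldots,r\}$ and $\{r+1,\ldots,m\}$ according to whether $i$ lies in $\mathbf{0}$ or in $\mathbf{0}^\complement$. The terms indexed by $\{r+1,\ldots,m\}$ combine with the first (Vandermonde-type) product exactly as in Lemma \ref{lem:pullback1}, while the terms indexed by $\{1,\ldots,r\}$ produce the infinite-tail factor appearing in the third line of the claim. The fourth line involves neither $w_k$ nor $d_j$, so it pulls back unchanged.

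Next, in each of the resulting products I would apply one of the reindexings $h = (\beta\cdot x_{r+i} - \beta\cdot x_{r+j}) + h'$, $h = (\beta\cdot x_i - \beta\cdot x_{r+j}) + h'$, or $h = (\beta\cdot z_k - \beta\cdot x_{r+j}) + h'$, as appropriate, so that each infinite tail of the form $\prod_{h\leq 0}$ reappears and telescopes against the corresponding $\prod_{h \leq \ast}$ denominator. The numerator-denominator cancellations then proceed in a completely parallel fashion to the previous lemma, and combining the surviving finite products of length $a_j$ into the single numerator and denominator of the second line of the claim produces the stated expression.

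The sole point requiring genuine care is the $z_k$-factor: because of the sign flip $w_k \mapsto -x_{r+k}$, the $z_k$ factor in $\widehat N$ has numerator and denominator interchanged relative to $N$, so that the finite product of length $a_j$ now appears in the \emph{numerator} of the second line rather than as the negative-index contribution that it was in Lemma \ref{lem:pullback1}. The main obstacle is therefore not conceptual but bookkeeping: one must carefully track the combined effect of the sign flip and the index shift through all four products, and verify that the diagonal $i=j$ contribution (previously singled out after \eqref{eq:MsecondFactor}) again recombines correctly with the $i \in \{r+1,\ldots,m\}$ piece of the second product. Once these substitutions are carried out and like terms are collected, the four lines of the claim drop out directly.
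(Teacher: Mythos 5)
Your proposal is correct and follows essentially the same route as the paper: restrict via Corollary~\ref{cor:fix2} (so $w_k\mapsto -x_{r+k}$), substitute $a_j=d_j+\beta\cdot x_{r+j}$, split the $i\in[m]$ product into $i\le r$, $i=r+j$, and the remaining $i>r$ terms, then reindex by the $\beta$-shifts and telescope, exactly as in the paper's adaptation of Lemma~\ref{lem:pullback1}. The only imprecision is the claim that the last factor ``pulls back unchanged'': only its $i\le r$ part survives as the fourth line, while its $i>r$ part must be merged with the $z_k$-factor of \eqref{eq:Nsecondline} to produce the finite numerator of the second line --- a recombination your telescoping step does in effect account for.
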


\begin{proof}
By Corollary \ref{cor:fix2}, we have
\begin{align}
\iota_{ \widehat{F}_\mathbf{0}}^*(\widehat{N}_{\beta,\vec{d}}) 
&= \prod_{\substack{i,j=1\\ i\neq j}}^s 
\frac{\prod_{h\leq d_j-d_i}(-x_{r+j}+x_{r+i}+hz)}{\prod_{h\leq 0}(-x_{r+j}+x_{r+i}+hz)} \cdot \prod_{j=1}^s \prod_{i=1}^m \frac{\prod_{h\leq 0} (-x_{r+j}+ x_i+hz)}{\prod_{h\leq d_j+\beta\cdot x_i} (-x_{r+j}+ x_i+hz)}\label{eq:Nfirstline}\\
& \cdot \prod_{j=1}^s\prod_{k=1}^{n}\frac{\prod_{h\leq d_j+\beta\cdot z_k}(-x_{r+j}+ z_k+hz)}{\prod_{h\leq 0}(-x_{r+j}+ z_k+hz)}\cdot \prod_{i=1}^m \prod_{k=1}^n \frac{\prod_{h\leq 0}(- x_i+ z_k+hz)}{\prod_{h\leq -\beta\cdot x_i+\beta\cdot z_k}(- x_i+ z_k+hz)}\label{eq:Nsecondline}
\end{align}
	
The second factor in the product vanishes unless $a_j=d_j+\beta\cdot x_{r+j}\geq 0$, for $1\leq j\leq s$. So we replace $d_j$ with $a_j-\beta\cdot x_{r+j}$ in order to write this in a more useful form.  
	
First, consider the first two factors of \eqref{eq:Nfirstline}. In the second factor, we first consider the case when $r+1\leq i\leq m$ and $i\neq r+j$. But we will reindex $i$ with $r+i$. We can reindex both products with $h=\beta\cdot x_{r+i} - \beta\cdot x_{r+j}+h'$ to obtain
\begin{align*}
&\frac{\prod_{h'\leq a_j-a_i}(x_{r+i}-x_{r+j}+(\beta\cdot x_{r+i} - \beta\cdot  x_{r+j} +h') z)}{\prod_{h'\leq \beta\cdot  x_{r+j} - \beta\cdot  x_{r+i}}(x_{r+i}-x_{r+j}+(\beta\cdot  x_{r+i} - \beta\cdot  x_{r+j} +h') z)}\\
&\quad \cdot
\frac{\prod_{h'\leq \beta\cdot  x_{r+j} - \beta\cdot  x_{r+i}}(x_{r+i}-x_{r+j}+(\beta\cdot  x_{r+i} - \beta\cdot  x_{r+j} +h') z)}{\prod_{h'\leq a_j}(x_{r+i}-x_{r+j}+(\beta\cdot  x_{r+i} - \beta\cdot  x_{r+j} +h') z)}    
\end{align*}
	
Combining the two, we obtain the contribution
\begin{align*}
& \frac{\prod_{h'\leq a_j-a_i}(x_{r+i}-x_{r+j}+(\beta\cdot  x_{r+i} - \beta\cdot  x_{r+j} + h') z)} {\prod_{h'\leq 0}(x_{r+i}-x_{r+j}+(\beta\cdot  x_{r+i} - \beta\cdot  x_{r+j} +h') z)} \\
&\cdot 
\frac{1}{\prod_{h'=1}^{a_j}(x_{r+i}-x_{r+j}+(\beta\cdot  x_{r+i} - \beta\cdot x_{r+j} +h') z)}
\end{align*}
	
Returning to the second factor of \eqref{eq:Nfirstline}, consider now the case when $1\leq i\leq r$. Then the second factor can be split into factors 
\begin{align*}
&\frac{\prod_{h\leq 0} (x_i- x_{r+j}+hz)} {\prod_{h\leq \beta\cdot  x_i - \beta\cdot  x_{r+j}} (x_i- x_{r+j}+hz)} \cdot \frac{\prod_{h\leq \beta\cdot  x_i - \beta\cdot  x_{r+j}} (x_i- x_{r+j}+hz)}{\prod_{h\leq d_j + \beta\cdot  x_i } (x_i- x_{r+j}+hz)} \\
=& \frac{\prod_{h\leq 0} (x_i- x_{r+j}+hz)} {\prod_{h\leq \beta\cdot  x_i - \beta\cdot  x_{r+j}} (x_i- x_{r+j}+hz)}\cdot \frac{1}{\prod_{h'=1}^{a_j}(x_{i}-x_{r+j}+(\beta\cdot  x_i - \beta\cdot  x_{r+j} +h') z)}.
\end{align*}
In the last computation, we made the substitution $h=\beta\cdot  x_i - \beta\cdot  x_{r+j}+h'$. 

A similar computation shows that when $i=r+j$, we obtain the following contribution from the second factor of \eqref{eq:Nfirstline}. 
\[
\frac{1}{\prod_{h'=1}^{a_j}(x_{i}-x_{r+j}+(\beta\cdot  x_i - \beta\cdot  x_{r+j} +h') z)}\Big{|}_{i = r+j}.
\]
	
Finally, we look at the remaining two factors in \eqref{eq:Nsecondline}. From the last factor, when $1\leq i\leq r$, we leave them as they are. For the rest, we can replace $i$ with $r+j$ for $1\leq j\leq s$, and write the last two factors as  
\[
\frac{\prod_{h\leq d_j+\beta\cdot z_k}(-x_{r+j}+z_k+hz)} {\prod_{h\leq -\beta\cdot x_{r+j}+\beta\cdot  z_k}(-x_{r+j}+z_k+hz)}.
\]
We make the substitution $h=h'-\beta\cdot x_{r+j}+\beta\cdot z_k$ to obtain
\[
\prod_{h=1}^{a_j} (-x_{r+j}+z_k+(-\beta\cdot  x_{r+j}+\beta\cdot z_k+h)z).
\]
	
Putting it all together, we arrive at the conclusion.
\end{proof}

Denote 
\begin{align*}
C_\beta :=  & \prod_{i=1}^r \prod_{k=1}^s\frac{\prod_{h\leq 0}(x_i-x_{r+k}+hz )}{\prod_{h\leq \beta\cdot x_i-\beta\cdot x_{r+k}}(x_i-x_{r+k}+hz )} \\  \cdot & \prod_{j=1}^r\prod_{k=1}^n\frac{\prod_{h\leq 0}(z_k-x_j+hz )}{\prod_{h\leq -\beta\cdot x_j+\beta\cdot  z_k}(z_k-x_j+hz )}.
\end{align*}

Following Lemmas \ref{lem:pullback1} and \ref{lem:pullback2} we can pull out several factors of $z$, and we rewrite 
\begin{align*}
\iota^*_{F_\mathbf{0}}(N_{\beta,\vec{d}})=&\prod_{\substack{i,j=1\\ i\neq j}}^r 
\frac{\prod_{h\leq a_j-a_i}z((\frac{x_j}{z}+\beta\cdot x_j)-(\frac{x_i}{z}+\beta\cdot  x_i)+h)}{\prod_{h\leq 0}z((\frac{x_j}{z}+\beta\cdot x_j)-(\frac{x_i}{z}+\beta\cdot x_i)+h)}\\
\cdot & \prod_{j=1}^r  \frac{\prod_{k=1}^n \prod_{h=0}^{a_j-1} z((\frac{z_k}{z}+\beta\cdot z_k)-(\frac{x_j}{z} +\beta\cdot x_j) - h)}{\prod_{i=1}^m \prod_{h=1}^{a_j} z((\frac{x_j}{z}+\beta\cdot x_j)-(\frac{x_i}{z}+\beta\cdot  x_i)+h)}
\cdot C_\beta
\end{align*}
under the substitution $a_j=d_j-\beta\cdot  x_j$ for $1\leq j\leq r$. Having done this, we define
\begin{align*}
N_{\beta,a} := z^{(n-m)a}\sum_{\sum a_i=a} &\left(\prod_{\substack{i,j=1\\ i\neq j}}^r 
\frac{\prod_{h\leq a_j-a_i}((\frac{x_j}{z}+\beta\cdot x_j)-(\frac{x_i}{z}+\beta\cdot x_i)+h)}{\prod_{h\leq 0}((\frac{x_j}{z}+\beta\cdot x_j)-(\frac{x_i}{z}+\beta\cdot x_i)+h)}\right.\\
&\cdot  \left.\prod_{j=1}^r  \frac{\prod_{k=1}^n\prod_{h=0}^{a_j-1} ((\frac{z_k}{z}+\beta\cdot z_k)-(\frac{x_j}{z} +\beta\cdot x_j) - h)}{\prod_{i=1}^m\prod_{h=1}^{a_j} ((\frac{x_j}{z}+\beta\cdot x_j)-(\frac{x_i}{z}+\beta\cdot x_i)+h)}\right)
\end{align*}

And similarly with 
\begin{align*}
\iota^*_{\widehat{F}_\mathbf{0}}(\widehat{N}_{\beta,\vec{d}})=&\prod_{\substack{i,j=1\\ i\neq j}}^s 
\frac{\prod_{h\leq a_j-a_i}z((\frac{x_{r+i}}{z} + \beta\cdot x_{r+i})-(\frac{x_{r+j}}{z}+\beta\cdot x_{r+j})+h)}{\prod_{h\leq 0}z(\frac{x_{r+i}}{z}+\beta\cdot x_{r+i})-(\frac{x_{r+j}}{z}+ \beta\cdot x_{r+j})+h)}\\
\cdot & \prod_{j=1}^s\frac{\prod_{k=1}^n \prod_{h=1}^{a_j} z( (\frac{z_k}{z}z_k+\beta\cdot z_k)-(\frac{x_{r+j}}{z}+\beta\cdot x_{r+j})+h)}{\prod_{i=1}^m \prod_{h=1}^{a_j}z((\frac{x_{i}}{z}+\beta\cdot x_i)-(\frac{x_{r+j}}{z}+\beta\cdot x_{r+j})+h)}\cdot C_\beta
\end{align*} 
under the substitution $a_j=d_j+\beta\cdot x_{r+j}$ for $1\leq j\leq s$. 
So we can write
\begin{align*}
\widehat{N}_{\beta,a} := z^{(n-m)a}\sum_{\sum a_i=a} &\left(\prod_{\substack{i,j=1\\ i\neq j}}^s 
\frac{\prod_{h\leq a_j-a_i}((\frac{x_{r+i}}{z} + \beta\cdot x_{r+i})-(\frac{x_{r+j}}{z}+\beta\cdot x_{r+j})+h)}{\prod_{h\leq 0}((\frac{x_{r+i}}{z}+\beta\cdot x_{r+i})-(\frac{x_{r+j}}{z}+ \beta\cdot x_{r+j})+h)}\right.\\
\cdot & \left.\prod_{j=1}^s\frac{\prod_{k=1}^n \prod_{h=1}^{a_j}( (\frac{z_k}{z}+\beta\cdot z_k)-(\frac{x_{r+j}}{z}+\beta\cdot x_{r+j})+h)}{\prod_{i=1}^m \prod_{h=1}^{a_j}((\frac{x_{i}}{z}+\beta\cdot x_i)-(\frac{x_{r+j}}{z}+\beta\cdot x_{r+j})+h)}
\right)
\end{align*}

\begin{proposition} \label{p.Gr_dual}
\begin{enumerate}
	\item \label{item:mn2} When $m \geq n+2$, then 
	\[
	N_{\beta,a}=\widehat{N}_{\beta,a}.
	\]
		
	\item When $m = n+1 $, then 
    \[
    N_{\beta,a}=\sum_{p=0}^a \frac{(-1)^{s(a-p)}}{(a-p)!} z^{p-a} \widehat{N}_{\beta,p}.
    \]
		
	\item When $m = n$, then
	\[
	N_{\beta, a}=\sum_{p=0}^a \Big(\prod_{h=0}^{p-1}  \frac{\sum_{k=1}^n (\frac{z_k}{z}+\beta\cdot z_k) - \sum_{k=1}^m (\frac{x_k}{z}+\beta\cdot x_k) + s -h}{p-h}\Big)(-1)^{sp} \widehat{N}_{\beta,a-p}
	\]
\end{enumerate}
\end{proposition}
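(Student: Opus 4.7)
The plan is to prove this identity by contour integration and a residue-theorem argument, in the spirit of the Seiberg-like duality proofs in \cite{DW, Zha} but adapted to the present relative equivariant setting. A first cleanup step is to introduce the shifted variables $\mathsf{X}_i := x_i/z + \beta\cdot x_i$ and $\mathsf{Z}_k := z_k/z + \beta\cdot z_k$. Since the prefactor $z^{(n-m)a}$ is common to $N_{\beta,a}$ and $\widehat{N}_{\beta,a}$, the proposition reduces to an identity of rational functions in these shifted variables alone.

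The heart of the argument is to realize both sums as iterated residues of a common meromorphic function. Concretely, I would introduce $r$ auxiliary variables $\tau_1,\ldots,\tau_r$ together with an integrand of the schematic shape
\begin{equation*}
\Omega(\vec{\tau}\,; \vec{a}) \;=\; \prod_{i<j}(\tau_j - \tau_i)\,\cdot\,\prod_{j=1}^{r}\frac{\prod_{k=1}^{n}(\mathsf{Z}_k - \tau_j)_{a_j}}{\prod_{i=1}^{m}(\tau_j - \mathsf{X}_i)_{a_j+1}},
\end{equation*}
with the constraint $\sum a_j = a$ enforced by a formal variable (e.g.\ an auxiliary $q$ and extracting the coefficient of $q^a$). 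Iterated residues of $\Omega\,d\tau_1\cdots d\tau_r$ at poles $\tau_j = \mathsf{X}_{i_j}$ indexed by size-$r$ subsets $\{i_1,\ldots,i_r\}\subseteq\{1,\ldots,m\}$ should recover the summands of $N_{\beta,a}$ when $\{i_j\} = \{1,\ldots,r\}$; under the Grassmannian duality $\Gr(r,E)\cong \Gr(s,E^\vee)$ underlying the bijection of fixed loci (Lemmas \ref{lem:fix1} and \ref{lem:fix2}), residues at the complementary subset $\{i_j\} = \{r+1,\ldots,m\}$ should produce the summands of $\widehat{N}_{\beta,a}$, up to a sign of the form $(-1)^{rs}$ coming from the reversed Vandermonde.

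Once this setup is in place, applying the residue theorem yields $N_{\beta,a} - (-1)^{rs}\,\widehat{N}_{\beta,a} = -\,\mathrm{Res}_\infty \Omega$, and the three cases follow by analyzing the behavior of $\Omega$ at infinity. The factors $\prod_i(\tau_j - \mathsf{X}_i)^{-a_j-1}$ contribute $\tau^{-m(a_j+1)}$ and $\prod_k(\mathsf{Z}_k-\tau_j)^{a_j}$ contribute $\tau^{na_j}$, so $\Omega$ decays at infinity like $\tau^{(n-m)a-\mathrm{const}}$. When $m\geq n+2$ this decays strictly faster than $\tau^{-2}$, so the residue at infinity vanishes and case (1) follows. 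When $m = n+1$ the residue at infinity contributes a single convolution term; expanding each factor $(\tau - \mathsf{X}_i)^{-a_j-1}$ in inverse powers of $\tau$ and collecting the coefficient of $\tau^{-1}$ should produce the precise $\sum_p \frac{(-1)^{s(a-p)}}{(a-p)!}z^{p-a}\widehat{N}_{\beta,p}$ in case (2). For $m = n$ the integrand is $O(1)$ at infinity and the residue at infinity becomes the more elaborate binomial-type convolution appearing in case (3).

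The main obstacle I anticipate is constructing $\Omega$ precisely enough that its finite residues genuinely reproduce the two sums with the correct coefficients, signs, and cross-terms. The Vandermonde factors on both sides and the constraint $\sum a_j = a$ both need careful encoding, and the sign $(-1)^{rs}$ from Grassmannian duality must be tracked through the iterated residue. A secondary difficulty is the explicit expansion at infinity in the boundary cases $m = n+1$ and $m = n$, where cancellations among the $m$ distinct $\mathsf{X}_i$-poles and the interplay with the Pochhammer denominators must combine to give exactly the $1/(a-p)!$ factor of case (2) and the binomial product of case (3).
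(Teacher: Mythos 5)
Your opening step (passing to $\mathsf{X}_i=x_i/z+\beta\cdot x_i$, $\mathsf{Z}_k=z_k/z+\beta\cdot z_k$ and stripping the common factor $z^{(n-m)a}$) is exactly how the paper begins, but from that point the paper simply observes that the resulting identities are, verbatim after the substitution $\lambda_{f_i}\leftrightarrow \tfrac{x_i}{z}+\beta\cdot x_i$, $\eta_k\leftrightarrow \tfrac{z_k}{z}+\beta\cdot z_k$, Lemma~A.2, Corollary~A.3 and Lemma~A.4 of \cite{Zha} (with part (1) already in \cite{BPZ} and \cite{Don}), so the only remaining work is bookkeeping of the powers of $z$. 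You instead try to reprove those combinatorial identities by a residue argument, and as written this is a plan rather than a proof: the integrand $\Omega$ is only given ``schematically,'' the verification that its finite residues reproduce the summands of $N_{\beta,a}$ with the correct Vandermonde ratios and signs is deferred, and the residue-at-infinity expansions that are supposed to produce the $1/(a-p)!$ factor in case (2) and the binomial product in case (3) --- which are the whole content of the boundary cases --- are not carried out. You acknowledge these as ``obstacles,'' but they are precisely the statements to be proven.

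Beyond incompleteness, the central mechanism is problematic. You propose a single $r$-variable form $\Omega\,d\tau_1\cdots d\tau_r$ whose iterated residues at $\tau_j=\mathsf{X}_{i_j}$, for size-$r$ subsets of $[m]$, give $N_{\beta,a}$, and whose residues ``at the complementary subset $\{r+1,\ldots,m\}$'' give $\widehat N_{\beta,a}$. The complementary subset has size $s=m-r$, so for $r\neq s$ there is no way to evaluate an $r$-fold iterated residue at it; the dual sum is naturally an $s$-fold sum (over $(a_1,\ldots,a_s)$ with $\sum a_i=a$) and would come from an $s$-fold integral with its own integrand, and identifying the two integrals is exactly the nontrivial duality, not a consequence of the residue theorem applied to one form. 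Moreover, even for the $N$-side, applying the residue theorem variable by variable picks up residues at \emph{all} $\binom{m}{r}$ subsets (these correspond to restrictions to the other fixed loci $F_{\mathbf i}$), whereas the proposition is a fixed-point-wise identity at $F_{\mathbf 0}$ alone; your schematic relation $N_{\beta,a}-(-1)^{rs}\widehat N_{\beta,a}=-\mathrm{Res}_\infty\Omega$ therefore cannot be the correct global statement without additional contour choices and decay estimates that are absent. To repair the argument you would either need to carry out a genuine vortex-partition-function--type integral identity relating an $r$-fold and an $s$-fold integral (with the infinity contributions computed explicitly in the cases $m=n+1$ and $m=n$), or, as the paper does, reduce to the already-proven hypergeometric identities in \cite{Zha}.
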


\begin{proof}
It is enough to show when $F_{\bf{i}}= F_\mathbf{0}$ since, for other cases, we only need to shift the index. This proposition essentially appears in Hai Dong's dissertation \cite{Don}, which is not available online. However, part \ref{item:mn2} appears in \cite{BPZ}, and a full account together with complete proof the all three parts of this proposition without the factors of $z^{(n-m)a}$ appears in \cite{Zha} as Lemma~A.2, Corollary~A.3, and Lemma~A.4. 

More precisely, from Lemma~A.2, Corollary~A.3, and Lemma~A.4 in \cite{Zha}, after the changes of variables $\lambda_{f_i} \leftrightarrow \frac{x_{i}}{z}+\beta\cdot x_i$ and $\eta_k\leftrightarrow \frac{z_k}{z}+\beta\cdot z_k$, we obtain the following equations for $m\geq n+2$, for $m=n+1$, and for $m=n$, respectively:  
\begin{align*}
    &\frac{N_{\beta,a}}{z^{(n-m)a}}=\frac{\widehat{N}_{\beta,a}}{z^{(n-m)a}}\\
    &\frac{N_{\beta,a}}{z^{-a}}=\sum_{p=0}^a \frac{(-1)^{s(a-p)}}{(a-p)!} \frac{\widehat{N}_{\beta,p}}{z^{-p}}\\
    &N_{\beta, a}=\sum_{p=0}^a \Big(\prod_{h=0}^{p-1}  \frac{\sum_{k=1}^n (\frac{z_k}{z}+\beta\cdot z_k) - \sum_{k=1}^m (\frac{x_k}{z}+\beta\cdot x_k) + s -h}{p-h}\Big)(-1)^{sp} \widehat{N}_{\beta,a-p}
\end{align*}
In the last equation, notice that no extra powers of $z$ appear, since $z^{(n-m)a}=1$, when $m=n$. Parts (1) and (3) of the proposition follow immediately. For part (2), multiplying the middle equation through by $z^{-a}$, we obtain the result. 
\end{proof}

Proposition~\ref{p.Gr_dual} immediately leads to the following generalized mutation conjecture mentioned in the introduction.

\begin{theorem} \label{Them.building_block}
With the same notation as in Sections~\ref{ss:Ifunctions} and \ref{s.proof:Gr}, 
we have
\begin{enumerate}
	\item\label{item:mn+2} When $m \geq n+2$, then
	\begin{align*}
		I_{Z}(q, q_1) = I_{\widehat{Z}}(q', q'_1)
	\end{align*}
	under the change of variables 
	\begin{equation}\label{cov1}
	q'_1=q_1\quad\text{ and }\qquad q'=q\cdot q_1^{- \langle c_1(E), -\rangle}.
	\end{equation}
 
	\item\label{item:mn+1} When $m = n+1$, then
	\begin{align*}
		I_{Z}(q, q_1) = e^{(-1)^s q_1/z} I_{\widehat{Z}}(q', q_1')
	\end{align*}
	under the change of variables \eqref{cov1}.
		
	\item When $m = n$, then
	\begin{align*}
		I_{Z}(q, q_1) = (1+(-1)^{s} q_1)^{\Psi} I_{\widehat{Z}}(q', q_1')
	\end{align*}
	under the change of variables 
	\[
	q'_1=q_1\quad\text{ and }\qquad q'=\frac{q\cdot q_1^{- \langle c_1(E), -\rangle}}{(1+(-1)^{s} q_1)^{\langle c_1(F), - \rangle-\langle c_1(E), - \rangle}},
	\]
    where $\Psi=s+(c_1(E)-c_1(F))/z$ 
\end{enumerate}
\end{theorem}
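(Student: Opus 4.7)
The plan is to verify the three identities using $\torus$-equivariant fixed-point localization, reducing the problem to the pullback calculations already carried out in Lemmas~\ref{lem:pullback1} and \ref{lem:pullback2}, followed by the three-case convolution identity of Proposition~\ref{p.Gr_dual}. Since $I_Z$ and $I_{\widehat Z}$ lie in the $\torus$-equivariant Givental spaces of $Z \cong \widehat Z$, and since the fixed loci are canonically identified via $F_{\mathbf i} \leftrightarrow \widehat F_{\mathbf i}$, it suffices to establish the claimed equality after restriction to each such fixed pair. By symmetry of the indexing I may focus exclusively on the pair $(F_{\mathbf 0}, \widehat F_{\mathbf 0})$.

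Next I would set up the comparison. After cancelling the common factor $C_\beta$ appearing in both pullbacks, I substitute $a_j = d_j - \beta\cdot x_j$ on the $Z$ side and $a_j = d_j + \beta\cdot x_{r+j}$ on the $\widehat Z$ side, organizing the sums by $a = \sum_j a_j$ to obtain $N_{\beta,a}$ and $\widehat N_{\beta,a}$. Tracking the shift in the total degree $d$ using $\sum_{i=1}^m \beta\cdot x_i = -\langle c_1(E), \beta\rangle$, the Novikov weight $q_1^d$ on the $Z$ side becomes $q_1^a \cdot q_1^{\sum_{j=1}^r \beta\cdot x_j}$, and the analogous expression on the $\widehat Z$ side produces the complementary shift $(q'_1)^a \cdot (q'_1)^{-\sum_{j=r+1}^m \beta\cdot x_j}$. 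Matching these shifts, together with the $q^\beta$ factor, forces the change of Novikov variables $q'_1 = q_1$, $q' = q\cdot q_1^{-\langle c_1(E),-\rangle}$ in Cases (1) and (2).

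With this reduction the identity collapses to comparing the inner sums $\sum_a q_1^a N_{\beta,a}$ and $\sum_a q_1^a \widehat N_{\beta,a}$, which is precisely the content of Proposition~\ref{p.Gr_dual}. Case (1) is then immediate. For Case (2), swapping the order of summation in the convolution identity yields
\begin{align*}
\sum_a q_1^a N_{\beta,a} = \Big(\sum_{k \geq 0} \frac{((-1)^s q_1/z)^k}{k!}\Big)\sum_p q_1^p \widehat N_{\beta,p} = e^{(-1)^s q_1/z} \sum_p q_1^p \widehat N_{\beta,p},
\end{align*}
producing the claimed exponential prefactor. In Case (3), the convolution involves the generalized binomial coefficient $\binom{\Psi'_\beta}{p}(-1)^{sp}$ with $\Psi'_\beta := \Psi + \langle c_1(E)-c_1(F),\beta\rangle$, where one identifies $\Psi = s + (c_1(E)-c_1(F))/z$ using $\sum_k z_k = -c_1^{\torus}(F)$ and $\sum_k x_k = -c_1^{\torus}(E)$ and then passes to the nonequivariant limit. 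Summing via the generalized binomial series gives $(1+(-1)^s q_1)^{\Psi'_\beta}$, which splits as $(1+(-1)^s q_1)^{\Psi}\cdot(1+(-1)^s q_1)^{\langle c_1(E)-c_1(F),\beta\rangle}$; the $\beta$-dependent factor is absorbed into the refined change of variables $q' = q\cdot q_1^{-\langle c_1(E),-\rangle}/(1+(-1)^s q_1)^{\langle c_1(F),-\rangle-\langle c_1(E),-\rangle}$, leaving the nonequivariant prefactor of the theorem.

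The delicate step will be Case (3), where one must correctly separate the $\beta$-dependent portion of the binomial factor from the purely cohomological prefactor $\Psi$ and match it cleanly against the asymmetric change of variables involving $(1+(-1)^s q_1)$. The equivariant fixed-point reduction is routine and the key combinatorial identity, Proposition~\ref{p.Gr_dual}, is already available from \cite{Don,Zha}; the new contribution here is simply that the presence of the base $B$ enters through additive shifts in the pullback formulas that can be fully absorbed into the change of Novikov variables.
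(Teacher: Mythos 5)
Your proposal is correct and follows essentially the same route as the paper's own proof: restrict to the matched fixed loci $F_{\mathbf 0}$, $\widehat F_{\mathbf 0}$, use the rewritings of Lemmas~\ref{lem:pullback1} and \ref{lem:pullback2} with the substitutions $a_j=d_j-\beta\cdot x_j$ and $a_j=d_j+\beta\cdot x_{r+j}$, invoke Proposition~\ref{p.Gr_dual}, resum (exponentially in case (2), via the binomial series in case (3)), and absorb the $\beta$-dependent factors $q_1^{\sum_i\beta\cdot x_i}$ and $(1+(-1)^sq_1)^{\Psi_\beta}$ into the change of Novikov variables. The only small imprecision is the mention of a nonequivariant limit in case (3): Theorem~\ref{Them.building_block} is an identity of $\torus$-equivariant series and no such limit is taken, with $\Psi$ identified directly from $\sum_k z_k=-c_1(F)$, $\sum_j x_j=-c_1(E)$ as in the paper.
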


\begin{proof}
To prove this theorem, we will localize at the fixed points of the $\torus$-action. We will show that the relations given above are satisfied when restricted to the fixed points. This fact together with the isomorphism of state spaces will then prove the result. 

First, we write 
	\[
	\iota^*_{F_\mathbf{0}}(I_{Z}(q,q_1)) = \sum_{\beta}\sum_{a\geq 0} N_{\beta,a}\cdot C_\beta \cdot J_\beta \cdot q^\beta q_1^{a+\sum_{i=1}^r \beta\cdot x_i}
	\]
	
	On the other hand, we have
	\[
	\iota^*_{\widehat{F}_\mathbf{0}}(I_{\widehat{Z}}(q',q'_1)) = \sum_{\beta}\sum_{a\geq 0} \widehat{N}_{\beta,a}\cdot C_\beta \cdot J_\beta \cdot (q')^\beta (q'_1)^{a-\sum_{j=1}^s \beta\cdot x_{r+j}}.
	\] 

And we notice that 
\begin{equation}\label{eq:q1}
		q_1^{a+\sum_{i=1}^r \beta\cdot x_i} =q_1^{a-\sum_{i=1}^s \beta\cdot x_{r+i}}\cdot q_1^{\sum_{i=1}^m \beta\cdot x_i}
	\end{equation}
Now we apply Proposition~\ref{p.Gr_dual}. When $m\geq n+2$, after applying Equation~\eqref{eq:q1}, the result is immediate. When $m = n+1$, we can write 
	\begin{align*}
		\iota^*_{F_\mathbf{0}}(I_{Z}(q,q_1)) &= \sum_{\beta}C_\beta \cdot J_\beta \cdot q^\beta q_1^{\sum_{i=1}^r \beta\cdot x_i}\sum_{a\geq 0} N_{\beta,a}\cdot q_1^{a}\\
		&=\sum_{\beta}C_\beta \cdot J_\beta \cdot q^\beta q_1^{\sum_{i=1}^r \beta\cdot x_i}\sum_{a\geq 0} \sum_{p=0}^a \frac{(-1)^{s(a-p)}}{(a-p)!} z^{p-a}\widehat{N}_{\beta,p} \cdot q_1^a\\
		&=\sum_{\beta}C_\beta \cdot J_\beta \cdot q^\beta q_1^{\sum_{i=1}^r \beta\cdot x_i}\sum_{p\geq 0}\sum_{a'\geq 0} \widehat{N}_{\beta,p} \cdot q_1^p \frac{(-1)^{s(a')}z^{-a'}}{(a')!} q_1^{a'}\\
        &=e^{(-1)^s q_1/z}\sum_{\beta}C_\beta \cdot J_\beta \cdot q^\beta\sum_{p\geq 0} \widehat{N}_{\beta,p} \cdot q_1^{p+\sum_{i=1}^r \beta\cdot x_i}.
	\end{align*}
After applying Equation~\eqref{eq:q1}, we obtain the result. 
	
When $m=n$, set $\Psi_\beta=\sum_{k=1}^n \beta\cdot z_k -\sum_{j=1}^m \beta\cdot x_j$, and recall $\Psi=\sum_{k=1}^n \tfrac{z_k}{z}-\sum_{j=1}^m \tfrac{x_j}{z}+s$. Then we have 
	\begin{align*}
		\iota^*_{F_\mathbf{0}}&(I_{Z}(q,q_1)) = \sum_{\beta}C_\beta \cdot J_\beta \cdot q^\beta q_1^{\sum_{i=1}^r \beta\cdot x_i}\sum_{a\geq 0} N_{\beta,a}\cdot q_1^{a}\\
		&=\sum_{\beta}C_\beta \cdot J_\beta \cdot q^\beta q_1^{\sum_{i=1}^r \beta\cdot x_i}\sum_{a\geq 0} \sum_{p=0}^a \Big(\prod_{h=0}^{p-1}  \frac{\Psi +\Psi_\beta -h}{p-h}\Big) \widehat{N}_{\beta,a-p}(-1)^{sp} q_1^a\\
		&=\sum_{\beta}C_\beta \cdot J_\beta \cdot q^\beta q_1^{\sum_{i=1}^r \beta\cdot x_i}\sum_{a\geq 0} \sum_{p'=0}^a \Big(\prod_{h=0}^{a-p'-1}  \frac{\Psi+\Psi_\beta -h}{(a-p')-h}\Big) \widehat{N}_{\beta,p'}(-1)^{s(a-p')}q_1^a\\
		&=\sum_{\beta}C_\beta \cdot J_\beta \cdot q^\beta q_1^{\sum_{i=1}^r \beta\cdot x_i}\sum_{p'\geq 0} \sum_{a\geq p'} \Big(\prod_{h=0}^{a-p'-1}  \frac{\Psi+\Psi_\beta -h}{(a-p')-h}\Big) \widehat{N}_{\beta,p'}(-1)^{s(a-p')}q_1^a\\
	    &=\sum_{\beta}C_\beta \cdot J_\beta \cdot q^\beta q_1^{\sum_{i=1}^r \beta\cdot x_i}\sum_{p'\geq 0} \widehat{N}_{\beta,p'}q_1^{p'} \sum_{a\geq p'} \Big(\prod_{h=0}^{a-p'-1}  \frac{\Psi +\Psi_\beta -h}{(a-p')-h}\Big) (-1)^{s(a-p')}q_1^{a-p'}\\
	    &=\sum_{\beta}C_\beta \cdot J_\beta \cdot q^\beta q_1^{\sum_{i=1}^r \beta\cdot x_i}\sum_{p'\geq 0} \widehat{N}_{\beta,p'}q_1^{p'} \sum_{a'\geq 0} \Big(\prod_{h=0}^{a'-1}  \frac{\Psi+\Psi_\beta -h}{a'-h}\Big) (-1)^{s a'} q_1^{a'}\\
	    &=\sum_{\beta}C_\beta \cdot J_\beta \cdot q^\beta q_1^{\sum_{i=1}^r \beta\cdot x_i}(1+(-1)^sq_1)^{\Psi+\Psi_\beta}\sum_{p'\geq 0} \widehat{N}_{\beta,p'}\cdot q_1^{p'}\\
         &=(1+(-1)^sq_1)^{\Psi}\sum_{\beta}C_\beta \cdot J_\beta \cdot q^\beta q_1^{\sum_{i=1}^m \beta\cdot x_i}(1+(-1)^sq_1)^{\Psi_\beta}\sum_{p'\geq 0} \widehat{N}_{\beta,p'}q_1^{p'-\sum_{j=1}^s \beta\cdot x_{r+j}}
\end{align*}
After changing the variables, we obtain the result. 
\end{proof}

\begin{remark}\label{Rem:new case}
    Note that the above theorem already implies new cases of the mutation conjecture in the absolute case. For example, our theorem implies the mutation conjecture for the following quiver, as we will explain:
    \begin{equation}\label{Qex}
	\begin{tikzpicture}
			\node[draw,
			circle,
			minimum size=0.6cm,
			] (gauge-1) at (0,0){1};
			
			\node [draw,
			circle,
			left=1.2cm of gauge-1
			]  (gauge-r) {$r$};

                \node [draw,
			minimum width=0.6cm,
			minimum height=0.6cm,
			left=1.2cm of gauge-r
			]  (frame-m) {$m$};

                \node [draw,
			circle,
			right=1.2cm of gauge-1
			]  (gauge-2) {$2$};
			
			\node [draw,
			minimum width=0.6cm,
			minimum height=0.6cm,
			]  (frame-4) at (0.9,1.3) {$4$};
			
                \draw[-stealth] (frame-m.east) -- (gauge-r.west)
			node[midway,above]{$1$};
   
			\draw[-stealth] (gauge-r.east) -- (gauge-1.west)
			node[midway,above]{$n$};
			
			\draw[-stealth] (gauge-1.north) -- (frame-4.south west)
			node[near end,left]{$2$};

                \draw[-stealth] (frame-4.south east) -- (gauge-2.north)
			node[midway,above]{$1$};

                \draw[-stealth] (gauge-2.west) -- (gauge-1.east)
			node[midway,below]{$4$};
	\end{tikzpicture}.
    \end{equation}
    Here a number over the arrow denotes the number of such arrows. 
    
    First consider the sub-quiver
     \begin{equation}\label{Qex2}
	\begin{tikzpicture}
			\node[draw,
			circle,
			minimum size=0.6cm,
			] (gauge-1) at (0,0){1};

                \node [draw,
			circle,
			right=1.2cm of gauge-1
			]  (gauge-2) {$2$};
			
			\node [draw,
			minimum width=0.6cm,
			minimum height=0.6cm,
			]  (frame-4) at (0.9,1.3) {$4$};

			\draw[-stealth] (gauge-1.north) -- (frame-4.south west)
			node[near end,left]{$2$};

                \draw[-stealth] (frame-4.south east) -- (gauge-2.north)
			node[midway,above]{$1$};

                \draw[-stealth] (gauge-2.west) -- (gauge-1.east)
			node[midway,below]{$4$};
	\end{tikzpicture}.
    \end{equation}
After choosing suitable stability conditions, the critical locus of the superpotential associated to \eqref{Qex2} yields the \emph{Gulliksen-Neg\r{a}rd 3-fold} (see e.g. \cite[Figure 5]{BPZ}), which we denote by $GN$.  This variety is described in more detail in Example~\ref{e:GN}.
    
If we then consider the  quiver \eqref{Qex}, the  critical locus of the superpotential associating to the whole quiver is equal to the relative quiver variety (with base  $B=GN$)  with quiver
\begin{equation*}
	\begin{tikzpicture}
			\node[draw,
			circle,
			minimum size=0.6cm,
			] (gauge) at (0,0){$r$};

                \node[draw,
			minimum width=0.6cm,
			minimum height=0.6cm,
                left=1.2cm of gauge
			] (frame-1) {$E$};

                \node[draw,
			minimum width=0.6cm,
			minimum height=0.6cm,
                right=1.2cm of gauge
			] (frame-2) {$F$};

                \draw[-stealth] (frame-1.east) -- (gauge.west)
			node[midway,above]{};
   
			\draw[-stealth] (gauge.east) -- (frame-2.west)
			node[midway,above]{};
        \end{tikzpicture},
   \end{equation*}
   where $E=\op{tot}(\cO_{GN}^{\oplus m})$ and $F$ is the pullback of $\cO_{\PP^7}(-1)^{\oplus n}$ by the natural map $GN \to \PP^7$ determined by the quiver \eqref{Qex2}.  Theorem~\ref{Them.building_block} on relative mutations then gives the mutation conjecture for \eqref{Qex}.
  
\end{remark}

\section{PAX/PAXY correspondence} \label{s.general_paxpaxy}

We will apply the methods of the previous section to study $I$-functions of resolutions of determinantal varieties. As described in \cite{JKL} using the language of GLSMs, for a possibly singular determinantal variety $B(A,s)$, there are two different ways of obtaining a resolution of $B(A,s)$ as the critical locus of a certain function on a smooth non-compact variety. Although the two different resolutions are isomorphic via Grassmannian duality, the $I$-functions for the resolutions are different distinct, as the relative quasimap theories of the two models are significantly different (see Remark~\ref{r:redundant}). In this section, we show that the resulting $I$-functions for the resolutions are related in a similar fashion as the $I$-functions in the previous section. Indeed, we will show that the geometric relationship between the two LG models may also be described by mutation.

\subsection{Determinantal varieties and resolutions}

Let the setup be as before, i.e., $B$ is a smooth projective variety, and $ E=\bigoplus_{i=1}^m  L_i$ and $ F=\bigoplus_{i=1}^n  M_i$ are vector bundles on $B$ of rank $m$ and $n$, respectively with $m \geq n$. Assume that $ E^\vee \otimes  F$ is generated by global sections, and fix a generic section $A \in \mathcal{H}om( E,  F)$, i.e., at a generic point $b \in B$, the matrix $A_b:=A|_b$ is of full rank.

\begin{definition}
    Let $0 \leq s \leq n$, then the determinantal variety $B(A, s)$ is defined as
\begin{align*}
	B(A,s) := \left\{ b \in B \mid \mathrm{rank} A_b \leq s  \right\}.
\end{align*}
\end{definition}

It is well known that
\begin{enumerate}
	\item  Locally, the ideal $I(B(A, s))$ is generated by all the $(s + 1) \times (s + 1)$ (determinant) minors of $A$.
	\item If $B(A,s)$ is not empty, then the codimension of $B(A,s)$ in $B$ is
	\begin{align*}
		\mathrm{codim} B(A,s) = (n-s)(m-s).
	\end{align*}
	\item For $s \geq 1$, $B(A,s)$ is not a complete intersection.
	\item In general, determinantal varieties are singular. Under the assumptions above,
	\begin{align*}
		\mathrm{Sing}(B(A,s)) \subseteq B(A,s-1)  \subset B(A,s). 
	\end{align*}
\end{enumerate}
See \cite{Otta, JKL, Har} for more details.

\begin{example}\label{e:GN}
	Let $B=\mathbb{P}^N$
	and let $ E= \cO^{\oplus m}$ and $ F = \cO(1)^{\oplus n}$. Assume $m=n$. Under these conditions, the Calabi--Yau determinantal varieties 
	satisfy the following conditions:
	\begin{enumerate}
		\item Dimension: $\dim B(A,s) = N-(m-s)^2$.
		\item Calabi--Yau condition: $N+1-(m-s)m = 0$.
	\end{enumerate}
	In particular, in dimension 3, if we combine the above two conditions, we find that Calabi--Yau determinantal varieties of dimension 3 with $E$ and $F$ as above are classified by 
	\begin{align*}
		(s, m, N) = (4, 5, 4), \quad (2, 4, 7), \quad (1, 5, 19).
	\end{align*}
	The second case is called Gulliksen-Neg\r{a}rd 3-fold \cite{GN}, which is smooth (see also Remark~\ref{Rem:new case}).

\end{example}

There are two (related) desingularizations of $B(A,s)$, each defined in terms of the family of matrices $\{A_b \mid b \in B\}.$  The first, called the \emph{PAX} model, is based on the observation that $A_b$ is rank $s$ if and only if the kernel of $A_b$ is a subspace of $E_b$ of dimension $m-s$.  Informally, the PAX desingularization of $B(A, s)$ parametrizes all subspaces of $\ker A_b$ of dimension $r$ as $b$ varies over $B$.

The second desingularization of $B(A, s)$, called the \emph{PAXY} model, uses the fact that $A_b$ has rank at most $s$ if and only if, as a matrix, $A_b$ factors into a product of an $n\times s$ matrix $Y_b$ with an $s \times m$ matrix $X_b$.  Informally, the PAXY desingularization parametrizes all such factorizations of $A_b$ whose second factor $X_b$ is full rank. 
The formal definitions of these varieties below and the proof that they are in fact resolutions of $B(A,s)$ are included below.  This fact is well-known to experts, but we were unable to find a reference in the literature.

To construct the PAX resolution, let $r=m-s$, and consider the Grassmannian bundle $\Gr(r, E)$ of $r$-planes of fibers of $E$ (as in the previous sections, we are suppressing the subscript $B$ on this Grassmannian bundle, to ease notation) and let 
\[
\pi_r: \Gr(r, E) \to B
\]
denote the natural projection. Over $\Gr(r, E)$, we have the tautological sequence
\begin{align*}
	0 \longrightarrow S_r \longrightarrow \pi_r^*  E \longrightarrow Q \longrightarrow 0.
\end{align*}
Dualizing and tensoring with $\pi_r^*{ F}$, we obtain a map
\begin{equation}\label{taut}
    \pi_r^*  (E^\vee \otimes F) \to S_r^\vee \otimes \pi_r^* F.
\end{equation}
Denote by $\widetilde A$ the section of $S_r^\vee \otimes \pi_r^* F$ obtained by composing the above map with $\pi_r^*A \in \Gamma(\Gr(r, E), \pi_r^*(E^\vee \otimes F)).$ 
\begin{definition}
    Define the PAX resolution of $B(A, s)$ to be the zero locus
    \[
    Z_A = \{\widetilde A = 0\}.
    \]
\end{definition}

We will verify that $Z_A$ is a resolution of $B(A,s)$.  
We require a mild generalization of Bertini's theorem, which holds by the same argument.  For instance, the proof of \cite[Theorem~2.8]{Otta} extends verbatim to the following situation.
\begin{lemma}\label{p.Bertini}

    Let $V \rightarrow B$ be a vector bundle on a smooth  variety $B$.  Assume $W \subset H^0(B, F)$ is a subspace which generates $F$.  More precisely, we assume that there exists a set $s_1, \ldots, s_k$ of elements of  $W$ such that the map
    \[
    \oplus_{i=1}^k \mathcal O_B \to V
    \] 
    defined by $(s_1, \ldots, s_k)$ is surjective.
    Then for a generic section $s \in W$, the zero locus $\{s=0\} \subset B$ is smooth of codimension equal to $\op{rank}(V)$.  
\end{lemma}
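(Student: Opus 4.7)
My plan is to prove this via the standard incidence-variety argument underlying most Bertini-type theorems. Form the incidence variety
\[
\mathcal{I} := \{(b,s) \in B \times W : s(b) = 0\} \subset B \times W,
\]
where $W$ is viewed as an affine space, together with its two projections $p : \mathcal{I} \to B$ and $q : \mathcal{I} \to W$. The strategy is to first establish that $\mathcal{I}$ is smooth of dimension $\dim B + \dim W - \op{rank}(V)$, and then apply generic smoothness to $q$.

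For the first step, the key observation is that evaluation of sections defines a morphism of vector bundles on $B$,
\[
ev : W \otimes_{\CC} \mathcal{O}_B \longrightarrow V, \qquad (b,s) \longmapsto s(b).
\]
The hypothesis that the map $\bigoplus_{i=1}^k \mathcal{O}_B \to V$ induced by $s_1, \ldots, s_k \in W$ is surjective immediately implies that $ev$ is surjective as a map of vector bundles, since the $s_i$ already belong to $W$. Hence $\ker(ev)$ is a vector subbundle of the trivial bundle of rank $\dim W - \op{rank}(V)$, and its total space is precisely $\mathcal{I}$. Since $B$ is smooth, $\mathcal{I}$ is smooth of the predicted dimension.

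The second step is to apply generic smoothness (Sard's theorem in characteristic zero) to $q : \mathcal{I} \to W$. This produces a Zariski-open dense subset $U \subset W$ such that for every $s \in U$, either the fiber $q^{-1}(s)$ is empty (in which case the conclusion is vacuous) or $q$ is smooth at every point of $q^{-1}(s)$, so that $\{s=0\} = q^{-1}(s)$ is smooth. A dimension count then gives $\dim \{s=0\} = \dim \mathcal{I} - \dim W = \dim B - \op{rank}(V)$ whenever the fiber is nonempty, which is the asserted codimension.

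I do not anticipate substantive obstacles, as the argument is a direct adaptation of the proof of \cite[Theorem~2.8]{Otta} cited in the lemma statement; the only minor subtlety worth pointing out is that the generation hypothesis is phrased as surjectivity of the specific map $\bigoplus \mathcal{O}_B \to V$ rather than pointwise surjectivity of the full evaluation map from $W$, but the two are equivalent precisely because the $s_i$ lie in $W$.
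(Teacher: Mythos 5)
Your proposal is correct and is essentially the argument the paper itself invokes: the paper gives no independent proof but appeals to the standard incidence-variety plus generic-smoothness argument of \cite[Theorem~2.8]{Otta}, noting (as you do) that the stated generation hypothesis makes the evaluation map $W\otimes\mathcal O_B\to V$ surjective so that the incidence locus is the smooth total space of a kernel subbundle. Your handling of the possibly empty fiber and the dimension count matches that standard proof, so there is nothing to add.
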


\begin{proposition}\label{p.PAX_res}
    The PAX model $Z_A$ is a resolution of singularities of $B(A,s)$.
\end{proposition}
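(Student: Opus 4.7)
The plan is to verify three things: that $Z_A$ is smooth of the expected dimension, that the projection $\pi := \pi_r|_{Z_A} : Z_A \to B$ factors through $B(A,s)$ and is proper, and that the resulting morphism $\pi: Z_A \to B(A,s)$ restricts to an isomorphism over the dense open locus $U := B(A,s) \setminus B(A,s-1)$.

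For smoothness, I would apply Lemma~\ref{p.Bertini} to the vector bundle $V := S_r^\vee \otimes \pi_r^* F$ on $\Gr(r, E)$, with $W$ taken to be the image of $H^0(B, E^\vee \otimes F)$ in $H^0(\Gr(r, E), V)$ obtained by pullback along $\pi_r$ followed by the map \eqref{taut}. The assumption that $E^\vee \otimes F$ is generated by global sections provides sections $s_1, \dots, s_k \in H^0(B, E^\vee \otimes F)$ giving a surjection $\cO_B^{\oplus k} \twoheadrightarrow E^\vee \otimes F$; pulling back to $\Gr(r, E)$ and postcomposing with the surjective bundle map \eqref{taut} yields a surjection $\cO_{\Gr(r, E)}^{\oplus k} \twoheadrightarrow V$ whose components all lie in $W$. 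Lemma~\ref{p.Bertini} then guarantees that for generic $A$ the zero locus $Z_A = \{\widetilde A = 0\}$ is smooth of codimension $\op{rank}(V) = rn$, so $Z_A$ has dimension $\dim B + r(m - r) - rn = \dim B - (m-s)(n-s)$, matching the expected dimension of $B(A,s)$.

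For the factorization and birationality of $\pi$, note that a point $(b, V') \in Z_A$ is an $r$-dimensional subspace $V' \subset E_b$ annihilated by $A_b$, equivalently $V' \subset \ker A_b$; this forces $\op{rank} A_b \leq m - r = s$, so $\pi$ factors through $B(A,s)$. Properness is immediate from the fact that $\Gr(r, E) \to B$ is proper and $Z_A$ is closed. Over $U$, where $A_b$ has rank exactly $s$, the kernel $\ker A|_U$ is a subbundle of $E|_U$ of rank $r$ (since $A$ has locally constant rank there), yielding a classifying morphism $U \to \Gr(r, E)$ whose image lies in $Z_A$ and which is inverse to $\pi$ over $U$. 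Hence $\pi$ restricts to an isomorphism over $U$. Because $\op{Sing}(B(A,s)) \subseteq B(A,s-1)$ is a proper closed subset of $B(A,s)$, the locus $U$ is dense, and $\pi : Z_A \to B(A,s)$ is therefore a proper birational morphism from a smooth variety, i.e.\ a resolution of singularities.

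The main obstacle is the Bertini step: one must verify that the subspace $W$ of sections of $V$ arising as pullbacks of sections of $E^\vee \otimes F$ generates $V$. Once the surjectivity of \eqref{taut} is in hand, this reduces to the standing assumption of global generation of $E^\vee \otimes F$; all remaining steps are then standard.
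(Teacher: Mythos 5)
Your proposal is correct and follows essentially the same route as the paper: smoothness of $Z_A$ via Lemma~\ref{p.Bertini} applied to $S_r^\vee \otimes \pi_r^*F$, using the surjectivity of \eqref{taut} together with global generation of $E^\vee \otimes F$, and then the observation that the projection is an isomorphism away from $B(A,s-1)$. You simply spell out more of the details (dimension count, properness, and the kernel-subbundle inverse over the open locus) that the paper leaves implicit.
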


\begin{proof}
By the tautological sequence on $\Gr(r, E)$, the map \eqref{taut} is surjective.
We assume that $E^\vee \otimes F$ is generated by global sections, and therefore the image of the map 
\[
g: H^0( \Gr(r, E), \pi_r^*(E^\vee \otimes F)) \rightarrow H^0( \Gr(r, E), S_r^\vee \otimes \pi_r^*F)
\]
is a subspace of $H^0( \Gr(r, E), S_r^\vee \otimes \pi_r^*F)$ which generates $S_r^\vee \otimes \pi_r^*F$ in the sense of Lemma~\ref{p.Bertini}. The generic section $A \in H^0(B, E^\vee \otimes F) = H^0( \Gr(r, E), \pi_r^*(E^\vee \otimes F))$ induces a generic section of $\op{im}(g)$.  Lemma~\ref{p.Bertini} then implies that $Z_A$ is smooth.  It follows from the construction that the map 
\[
Z_A \to B(A,s)
\]
is an isomorphism  away from $B(A, s-1)$.  
\end{proof}

To construct the PAXY resolution, let $S_s$ denote the rank-$s$ tautological bundle on $\Gr(s, E^\vee)$ and consider the total space $\op{tot}(S_s \otimes \pi_s^*F)$, where
\[
\pi_s: \Gr(s, E^\vee) \to B.
\]

Let $p_s: \op{tot}(S_s \otimes \pi_s^*F) \to \Gr(s, E^\vee)$ denote the projection.  Pulling back the tautological sequence, we obtain a map 
\[
\sigma: p_s^*(S_s \otimes \pi_s^*F) \to p_s^*\pi_s^*(E^\vee \otimes F).
\]
Composing with the tautological section of
\[
p_s^*(S_s \otimes \pi_s^*F) \rightarrow \op{tot}(S_s \otimes \pi_s^*F),
\]
we obtain a section $\tilde{\sigma} \in \Gamma(\op{tot}(S_s \otimes \pi_s^*F), p_s^*\pi_s^*(E^\vee \otimes F)).$
\begin{definition}
    Define the PAXY resolution of $B(A, s)$ to be the zero locus
    $\widehat Z_A = \{p_s^*\pi_s^*A - \tilde{\sigma}=0\}.$
\end{definition}

\begin{proposition}\label{p.PAXY_res}
  The PAXY model $\widehat{Z}_A$ is a resolution of $B(A,s)$. In fact, under the isomorphism of dual Grassmannian bundles, we have $Z_A \cong \widehat{Z}_A $.
\end{proposition}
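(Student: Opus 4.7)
The plan is to reduce the resolution claim to the isomorphism $Z_A \cong \widehat{Z}_A$; once this is established, that $\widehat Z_A$ is a resolution of $B(A,s)$ follows immediately from Proposition~\ref{p.PAX_res}.

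To construct the isomorphism, first I would tensor the tautological sequence on $\Gr(s, E^\vee)$ by $\pi_s^* F$ to obtain
\begin{equation*}
0 \to S_s \otimes \pi_s^* F \to \pi_s^*(E^\vee \otimes F) \to Q_s \otimes \pi_s^* F \to 0,
\end{equation*}
and pull this sequence back to $\op{tot}(S_s \otimes \pi_s^* F)$ along $p_s$. By construction, $\tilde\sigma$ is the image, under $p_s^* \sigma$, of the tautological section of $p_s^*(S_s \otimes \pi_s^* F)$, so $\tilde\sigma$ lies in the subbundle $p_s^*(S_s \otimes \pi_s^* F) \subset p_s^* \pi_s^*(E^\vee \otimes F)$. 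Projecting the defining equation $p_s^* \pi_s^* A = \tilde\sigma$ of $\widehat Z_A$ onto $p_s^*(Q_s \otimes \pi_s^* F)$ therefore reduces to the condition $\bar A = 0$, where $\bar A$ denotes the image of $\pi_s^* A$ under the surjection $\pi_s^*(E^\vee \otimes F) \twoheadrightarrow Q_s \otimes \pi_s^* F$. Crucially, this condition is pulled back from $\Gr(s, E^\vee)$. Conversely, over the locus $\{\bar A = 0\}$, exactness of the sequence guarantees that $\pi_s^* A$ lifts uniquely to a section of $S_s \otimes \pi_s^* F$, which then forces $\tilde\sigma$ to equal this lift. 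Hence the projection $p_s \colon \widehat Z_A \to \{\bar A = 0\} \subseteq \Gr(s, E^\vee)$ is an isomorphism.

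Next, I would apply Grassmannian duality exactly as in the proof of Lemma~\ref{lem:iso_basic_mutation}, which identifies $\Gr(s, E^\vee) \cong \Gr(r, E)$ and $Q_s^\vee \cong S_r$. Under this identification, $\bar A = 0$ translates into the vanishing of $\pi_r^* A|_{S_r}$, which is precisely the equation $\widetilde A = 0$ defining $Z_A$. Combining, one obtains $\widehat Z_A \cong \{\bar A = 0\} \cong Z_A$, and by Proposition~\ref{p.PAX_res} this chain of isomorphisms exhibits $\widehat Z_A$ as a resolution of $B(A,s)$.

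I do not anticipate a serious obstacle. The principal care needed is the bookkeeping identification of $\tilde\sigma$ as an element of the subbundle $p_s^*(S_s \otimes \pi_s^* F)$ inside $p_s^* \pi_s^*(E^\vee \otimes F)$; once that is in hand, the splitting of the defining equation via the tautological short exact sequence is automatic, and the identification with $Z_A$ reuses the Grassmannian-duality step already carried out in Lemma~\ref{lem:iso_basic_mutation}.
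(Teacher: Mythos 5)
Your proposal is correct and follows essentially the same route as the paper: both arguments rest on the tautological sequence on $\Gr(s,E^\vee)$ tensored with $\pi_s^*F$, the Grassmannian-duality identification $Q_s^\vee\cong S_r$ (so that the quotient-bundle condition $\bar A=0$ is exactly $\{\widetilde A=0\}=Z_A$), and the observation that the defining equation of $\widehat Z_A$ determines the fiber coordinate uniquely over that locus, with the resolution statement then reduced to Proposition~\ref{p.PAX_res}. The only difference is cosmetic: the paper builds the inverse map as an embedding $Z_A\hookrightarrow \op{tot}(S_s\otimes\pi_s^*F)$ via the lifted section and defers the final identification to a local coordinate check, whereas you make that step explicit through exactness of the sequence.
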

\begin{proof}
    The first statement follows from the second, or again from Lemma~\ref{p.Bertini}. To prove the second, notice that under Grassmannian duality the section $\widetilde{A}$ of $S_r^\vee \otimes \pi_r^*F \rightarrow \Gr(r, E)$ is identified with a section $\widetilde{A}'$ of $Q_s \otimes \pi_s^*F \rightarrow \Gr(s, E^\vee)$. The PAX model $Z_A$ may therefore be represented as the locus $\{\widetilde{A}'=0\}$ in $\Gr(s, E^\vee).$
    Consider the  tautological sequence
    \begin{equation*}
        \begin{tikzcd}
            0 \ar[r] & S_s\otimes\pi_s^*F \ar[r, "\iota"] & \pi_s^*(E^\vee \otimes F) \ar[d, "\pi_s"'] \ar[r, "q"] & Q_s\otimes \pi_s^*F \ar[r] & 0 \\
            & & \Gr(s, E^\vee) \ar[u, bend right, "\pi_s^*A"'] \ar[ru, "\widetilde{A}'"'] & &
        \end{tikzcd}
    \end{equation*}
    on $\Gr(s, E^\vee),$ and note that
     $\widetilde{A}'$ is the composition of $\pi_s^*A$ with $q$.  
     Therefore $Z_A$ is exactly the locus of points $x$ in $\Gr(s, E^\vee)$
     for $\pi_s^*A(x)$ lies in $\op{im}(\iota)$. 
    Let $t \in H^0(Z_A, S_s\otimes\pi_s^*F)$ denote the section induced by restricting $\pi_s^*A$ to $Z_A$.  This defines an embedding $Z_A \hookrightarrow \tot(S_s\otimes\pi_s^*F).$
     A local coordinate computation verifies that
     the image of this map is exactly $\widetilde{Z}_A.$

\end{proof}

\subsection{PAX/PAXY models as a relative quiver mutation} 
In this section we show that the PAX and PAXY models are related by mutation.

\begin{theorem}\label{thm.pax/paxy_mutation}
The PAX and PAXY models 
each arise as the critical locus of a superpotential defined on a quiver bundle over $B$.
Furthermore, the respective quiver bundles and their associated superpotentials are related by a relative quiver mutation.     
\end{theorem}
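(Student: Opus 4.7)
The plan has two components: identify the critical loci of the proposed superpotentials with the PAX and PAXY resolutions, then verify that the two quiver bundles are related by the mutation procedure of Subsection~\ref{ss.quivers}. The workhorse observation is that for $W = \langle P, \sigma \rangle$ with $P$ a fiber coordinate of a vector bundle $V$ and $\sigma$ a regular section of $V^\vee$, the critical locus is $\{\sigma = 0\} \cap \{P = 0\}$.

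For the PAX side, Subsection~\ref{s.twisted_Gr_bdl} already identifies $V \sslash_{\theta_+} \GL_r \cong \tot(S_r \otimes \pi_r^* F^\vee)$ over $\Gr(r, E)$. Under this identification, $X$ becomes the tautological inclusion $S_r \hookrightarrow \pi_r^* E$ and $AX$ descends to the section $\widetilde A$ of $S_r^\vee \otimes \pi_r^* F$ used in the construction of $Z_A$. Thus $W_{PAX}=\tr(PAX)$ is the natural pairing $\langle P, \widetilde A \rangle$, and regularity of $\widetilde A$ (which was already established in the proof of Proposition~\ref{p.PAX_res} via Lemma~\ref{p.Bertini}) yields $\op{Crit}(W_{PAX}) = Z_A$. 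The PAXY case is analogous: the quiver bundle is $\tot((S_s \otimes \pi_s^* F) \oplus \pi_s^*(E \otimes F^\vee))$ over $\Gr(s, E^\vee)$, and $W_{PAXY} = \langle P, A - YX \rangle$ where $P$ is the fiber coordinate of $\pi_s^*(E \otimes F^\vee)$. A variant of Lemma~\ref{p.Bertini} shows that the section $A - YX$ of $\pi_s^*(E^\vee \otimes F)$ over $\tot(S_s \otimes \pi_s^* F)$ is regular for generic $A$; its zero locus is precisely $\widehat Z_A$ by the proof of Proposition~\ref{p.PAXY_res}, giving $\op{Crit}(W_{PAXY}) = \widehat Z_A$.

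For the mutation, I would apply the four steps of Subsection~\ref{ss.quivers} to the PAX quiver at its unique gauge node: reverse $X$ and $P$ and add a solid arrow $F \to E$ (step 1); replace $\CC^r$ by $\CC^s$ since $\max(m,n) - r = m - r = s$ (step 2); then track the new cycles to assemble the new superpotential (step 4). Carrying the dashed arrow $A$ through the mutation gives two new cycles, the length-3 cycle $E \to s \to F \to E$ contributing $\tr(PYX)$ and the length-2 cycle with $A$ and the new $P$ contributing $\tr(PA)$; these combine with the appropriate sign to yield $W_{PAXY} = \tr(P(A - YX))$. The main subtlety is the treatment of the dashed arrow in step 3: one must interpret the 2-cycle deletion rule as applying only to cycles among genuine variables, so that cycles involving the frozen $A$ (in particular the $\tr(PA)$ term) are preserved. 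Together with the compatibility $\theta_+ \leftrightarrow \theta_-$ dictated by Grassmannian duality, this completes the argument.
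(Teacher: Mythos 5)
Your proposal is correct and follows essentially the same route as the paper: identify each critical locus with the corresponding resolution by exhibiting the superpotential as a pairing $\langle P,\sigma\rangle$ with $\sigma$ a regular section (the paper phrases the forcing of $P=0$ as the normal bundle being generated by $A$ and $\partial_b A\cdot X$, which is the same transversality statement), and then match the two quiver bundles via the mutation steps of Subsection~\ref{ss.quivers} at the unique gauge node. Your explicit handling of the frozen dashed arrow in the $2$-cycle step and of the resulting $\tr(PA)$ term is a slightly more careful account of a point the paper simply resolves by fiat when it declares $W_{PAXY}=\tr(P(A-YX))$, but it does not change the substance of the argument.
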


\begin{proof}
Consider the following quiver bundle---similar to the one defined in Section~\ref{s.twisted_Gr_bdl}, but now decorated with a dashed arrow representing the map $A$:

\begin{equation} \label{d:PAX}
\begin{tikzpicture}
\node[draw,
    circle,
    minimum size=0.6cm,
] (gauge) at (0,0){r};

\node [draw,
    minimum width=0.6cm,
    minimum height=0.6cm,
    left=1.2cm of gauge
]  (frame-E) {$E$};

\node [draw,
    minimum width=0.6cm,
    minimum height=0.6cm,
    above=1.2cm of gauge
]  (frame-F) {$F$};

\draw[-stealth] (gauge.west) -- (frame-E.east)
    node[midway,above]{$X$};

\draw[-stealth] (frame-F.south) -| (gauge.north)
    node[near end,right]{$P$};

\draw[-stealth,dashed] (frame-E.north) -- (frame-F.west)
    node[midway,left]{$A$};
\end{tikzpicture}.
\end{equation}
Here we use a dashed arrow to represent that the section $A \in \mathcal{H}om( E,  F)$ is \emph{fixed} and does not vary in moduli. This dashed arrow, therefore, does not affect the GIT quotient, but it does play a role in defining the superpotential on the quiver bundle (as it creates a cycle).

As in Section~\ref{s.twisted_Gr_bdl}, we consider the vector bundle $V = \mathcal{H}om(\cO_B \otimes \CC^r, E) \oplus \mathcal{H}om(F, \cO_B \otimes \CC^r)$, and define
\[
Z := V \sslash_{\theta_+} \GL_r,
\]
where $\theta_+$ is the determinant. In contrast to Section~\ref{s.twisted_Gr_bdl}, we  use the map $A$ to define a gauge invariant superpotential on $Z$:
\begin{align*}
	W_{PAX} = \tr (PAX).
\end{align*}

The critical locus of the superpotential function $\mathrm{Crit}(W_{PAX})$ is given by 
\[\left\{ AX = PA = \tr (P \partial_b A \cdot X ) = 0 \right\}.
\]
Over $AX=0$, $A$ and $\partial_b A \cdot X$ generate the normal bundle. Therefore, we have $P=0$. The critical locus becomes
\[
\left\{ AX = 0 \right\} \subset \Gr(r, E).
\]
Notice that $AX$ is the section of $S_r^\vee \otimes \pi_r^*F$ induced from the tautological quotient $\pi_r^*(E \otimes F) \rightarrow S_r^\vee \otimes \pi_r^* F $. It follows that $ \left\{ AX = 0 \right\}$ is exactly $Z_A$ from the last subsection, thus justifying the nomenclature ``PAX model.''

Let us now consider the quiver mutation of \eqref{d:PAX} at the gauge node. After relabelling ($P$ and $X$), we obtain the following quiver
\begin{equation*}
\begin{tikzpicture}
\node[draw,
    circle,
    minimum size=0.6cm,
] (gauge) at (0,0){s};

\node [draw,
    minimum width=0.6cm,
    minimum height=0.6cm,
    left=1.2cm of gauge
]  (frame-E) {$E$};

\node [draw,
    minimum width=0.6cm,
    minimum height=0.6cm,
    above=1.2cm of gauge
]  (frame-F) {$F$};

\draw[-stealth] (frame-E.east) -- (gauge.west)
    node[midway,above]{$X$};

\draw[-stealth] (gauge.north) -| (frame-F.south)
    node[near end,right]{$Y$};

\draw[-stealth] (frame-F.south west) -- (frame-E.north east)
    node[midway,right]{$P$};

\draw[-stealth,dashed] (frame-E.north) -- (frame-F.west)
    node[midway,left]{$A$};
\end{tikzpicture},
\end{equation*}
with the character $\theta_-$, which is the inverse of the determinant.

There are now two loops, and we define the superpotential to be:
\begin{align*}
	W_{PAXY}= P(A-YX).
\end{align*}

The critical locus of the potential function $\mathrm{Crit}(W_{PAXY})$ is defined by
\[
\left\{ A- YX = XP = PY = \tr(P\partial_b A) = 0 \right\}.
\]
By similar considerations, we see that $\mathrm{Crit}(W_{PAXY})$ is exactly the locus $ \{ A- YX = 0 \} $. Notice that $YX$ is the section of $\pi_s^*(E \otimes F)$ induced from the tautological inclusion $S_s \otimes \pi_s^*F \hookrightarrow \pi_s^*(E^\vee \otimes F)$. We conclude that $ \{ A- YX = 0 \} $ is exactly the PAXY model $\widehat{Z}_A$ defined in the last subsection. 
\end{proof}

\subsection{A quantum Thom--Porteous formula}

In this section, we give $I$-functions for $Z_A$ and $\widehat{Z}_A$ in the general case that $B$ is a smooth projective variety. This may be viewed as a generalization of the quantum Lefschetz hyperplane theorem. 
\begin{remark}\label{r:redundant}
In light of Proposition~\ref{p.PAXY_res}, it might at first seem redundant to construct $I$-functions for both $Z_A$ and $\widehat{Z}_A$, however the different embeddings of $Z_A$ and $\widehat{Z}_A$ 
yield different $I$-functions. Heuristically, this is due the fact that the  relative quasimap invariants of \cite{Oh, SupTse} of the two GIT quotients have no obvious connection with one another.
\end{remark}

As before, let $J_B(q, u ,z)$ denote the $J$-function of $B$, and define $J_\beta = J_\beta(u, z)$ via 
\[
J_B(q, u, z) =\sum_\beta q^\beta J_\beta(u, z).
\]

Let $j_{PAX}: Z_A \rightarrow \Gr(r, E)$, and let $\pi_r: \Gr(r, E) \rightarrow B$. We follow the notations in Section \ref{s.generalized-Seiberg-duality} to denote the equivariant classes: let $x_i=c_1(L^\vee_i)$ and $\hat x_i:=\pi_r^*(x_i)$, let $z_j=c_1(M_j^\vee)$ and $\hat z_j=\pi_r^*(z_j)$, and let $\{ y_i \}_{i=1}^r$ be the Chern roots of $S^\vee_r$. Consider the following hypergeometric modification\footnote{In certain cases, such as when $B$ is Grassmannian, we recover the $I$-functions in \cite{HM18}.}
\begin{equation}\label{e:PAX}
	I_{Z_A}(q', q'_1, z)
 = j^*_{PAX} \left(\lim_{\lambda \mapsto 0} \sum_\beta (q')^\beta \sum_d (q'_1)^d\sum _{\substack{(d_1,\dots,d_r)\\ \sum d_i=d}} N^{PAX}_{\beta,\vec{d}} \cdot \pi_r^* J_\beta \right)
\end{equation}
with
\begin{align*}
	N^{PAX}_{\beta,\vec{d}}=&\prod_{\substack{i,j=1\\ i\neq j}}^r 
	\frac{\prod_{h \leq d_j-d_i}(y_j-y_i+hz)}{\prod_{h \leq 0}(y_j-y_i+hz)} \cdot \prod_{j=1}^r \prod_{i=1}^m \frac{\prod_{h \leq 0} (y_j-\hat x_i+hz)}{\prod_{h \leq d_j-\beta\cdot x_i} (y_j-\hat x_i+hz)}\\
	\cdot & \prod_{j=1}^r \prod_{k=1}^{n} \frac{\prod_{h \leq d_j-\beta\cdot z_k}(y_j-\hat z_k+hz)}{\prod_{h \leq 0}(y_j-\hat z_k+hz)},
\end{align*}
where $\lim_{\lambda \mapsto 0}$ denotes the nonequivariant limit of the $\torus$-equivariant cohomology.

Similar to the previous section, the map \begin{equation}\label{COV1}
    {q}^{\tilde \beta} \mapsto {q'}^{(\pi_r)_*(\tilde \beta)} {q_1'}^{\langle c_1(S_r^\vee), \tilde \beta \rangle},
\end{equation}
allows us to identify the Novikov ring  
$\QQ[\![\op{Eff}(\Gr(r, E))]\!]$ with a subring of 
$\QQ[\![\op{Eff}(B)]\!](\!(q_1')\!).$  Under this identification,
$I_{Z_A}(q', q_1', z)$ lies in the Givental space $H^*(Z_A)(\!(z^{-1})\!)$.

Then again by the work of \cite{CLS} (or by a generalization of \cite{Oh} to the twisted setting), one obtains:
\begin{proposition} \label{p.I_PAX}
The function
	$I_{Z_A}(q', q_1', z)$ lies in the Lagrangian cone of PAX model $Z_A$.
\end{proposition}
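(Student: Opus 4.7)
The plan is to mirror the strategy used in the proof of Proposition~\ref{p.I-function-Gr}, now twisting by the \emph{equivariant Euler class} of $S_r^\vee \otimes \pi_r^* F$ rather than its inverse, and then descending from $\Gr(r,E)$ to the zero locus $Z_A$ using the quantum Lefschetz theorem of \cite{CoatesQLHT}. The first observation is that the factor
\[
\prod_{j=1}^r \prod_{k=1}^{n} \frac{\prod_{h \leq d_j-\beta\cdot z_k}(y_j-\hat z_k+hz)}{\prod_{h \leq 0}(y_j-\hat z_k+hz)}
\]
appearing in $N^{PAX}_{\beta,\vec d}$ is precisely the hypergeometric modification associated with twisting by $e_\torus(S_r^\vee \otimes \pi_r^* F)$; indeed $S_r^\vee \otimes \pi_r^* F = \bigoplus_{k=1}^n S_r^\vee \otimes \pi_r^* M_k$ has Chern roots $\{y_j - \hat z_k\}$. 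Consequently, before applying $j_{PAX}^*$ and taking the nonequivariant limit, the bracketed expression in \eqref{e:PAX} is the $\torus$-equivariant $(S_r^\vee \otimes \pi_r^* F, e_\torus)$-twisted $I$-function on $\Gr(r,E)$.

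First, I would construct the Brown $I$-function on the Abelian GIT quotient $\Gr_T$ together with the twist by $((S_r^\vee \otimes \pi_r^*F)_T, e_\torus)$, and then apply the further $(\Phi, \mathbf c')$-twist by the root bundle as in Section~5.2 of \cite{CLS}. On $\Gr_T$, the bundle $(S_r^\vee \otimes \pi_r^*F)_T$ splits as a direct sum of line bundles, which keeps us within the reach of classical twisted theory. Projection via the Givental--Martin map $p: \mathcal H^W_{\Gr_T} \to \mathcal H_{\Gr(r,E)}$ followed by the $\lambda \mapsto 0$ limit of the $\CC^*$-parameter scaling the fibers of $\Phi$ then yields, by the argument of \cite[Theorem~5.11]{CLS} (extended to general multiplicative twists as noted in \cite[Remark~3.7]{CLS} and to direct sums of line bundles exactly as in Proposition~\ref{p.I-function-Gr}), a family lying on the $\torus$-equivariant $(S_r^\vee \otimes \pi_r^* F, e_\torus)$-twisted Lagrangian cone of $\Gr(r,E)$.

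Next, I would apply the quantum Lefschetz theorem of \cite{CoatesQLHT} to descend from this twisted cone to the honest Lagrangian cone of $Z_A$. By Proposition~\ref{p.PAX_res} and Lemma~\ref{p.Bertini}, the section $\widetilde A$ is a regular $\torus$-equivariant section of $S_r^\vee \otimes \pi_r^* F$ whose zero locus is smooth of the expected codimension, so the pullback $j_{PAX}^*$ of a family on the Euler-twisted cone of $\Gr(r,E)$ produces a family on the $\torus$-equivariant Lagrangian cone of $Z_A$. This is the direct analogue of the step used in the proof of Proposition~\ref{p.I-function-Gr-dual}.

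The principal technical obstacle is that $S_r^\vee \otimes \pi_r^* F$ is not in general a sum of nef line bundles on $\Gr(r,E)$, so the classical convexity-based form of quantum Lefschetz is not directly available. This is precisely the situation resolved by \cite{CoatesQLHT}, which accommodates general (possibly non-convex) direct sums of line bundles via a nonequivariant limit procedure. The necessary convergence is guaranteed by the finiteness property recorded in Remark~\ref{finitesum1}, ensuring that for each fixed $\beta$ only finitely many multi-degrees $\vec d$ contribute at each order in $q_1'$ to $I_{Z_A}(q', q_1', z)$.
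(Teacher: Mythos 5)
Your overall route (Abelian/non-Abelian correspondence via \cite{CLS} on the Abelian quotient $\Gr_T$, followed by the quantum Lefschetz theorem of \cite{CoatesQLHT}) is the same as the paper's, but there is a genuine gap at the step you dismiss in your last paragraph. The theorem of \cite{CoatesQLHT} does not ``accommodate'' non-convex bundles automatically: it takes as a \emph{hypothesis} that the nonequivariant limit of the given point on the Euler-twisted cone exists, and for the twist by $S_r^\vee\otimes\pi_r^*F$ this is exactly what must be checked. If $d_j-\beta\cdot z_k<0$ for some $j,k$, the factor
\[
\prod_{j=1}^r\prod_{k=1}^n\frac{\prod_{h\le d_j-\beta\cdot z_k}(y_j-\hat z_k+hz)}{\prod_{h\le 0}(y_j-\hat z_k+hz)}
\]
contributes terms such as $(y_j-\hat z_k)^{-1}$, which are invertible only equivariantly, so the $\lambda\mapsto 0$ limit of that coefficient would not be defined. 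Remark~\ref{finitesum1} is irrelevant to this: it guarantees finiteness of the summation over $\vec d$ at each order in the Novikov variables (well-definedness as a formal series), not the existence of the limit in the equivariant parameters. Indeed, if your reasoning were valid as stated, the identical argument would prove Proposition~\ref{p.I_PAXY} directly, whereas the paper points out that for the PAXY twist the nonequivariant limit is \emph{not} a priori well-defined and that proposition must instead be deduced from the comparison theorem.

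The missing argument, which the paper supplies, is: the factor $\prod_{j}\prod_{i}\frac{\prod_{h\le 0}(y_j-\hat x_i+hz)}{\prod_{h\le d_j-\beta\cdot x_i}(y_j-\hat x_i+hz)}$ in $N^{PAX}_{\beta,\vec d}$ vanishes unless, for every $j$, there is some $i$ with $d_j\ge\beta\cdot x_i$; and since $E^\vee\otimes F$ is globally generated, each summand $L_i^\vee\otimes M_k$ is nef, giving $\beta\cdot x_i\ge\beta\cdot z_k$ for all effective $\beta$. Together these yield $d_j\ge\beta\cdot z_k$ for all $j,k$ on every nonvanishing term, so the twisting factors above lie entirely in the numerator and the nonequivariant limit in \eqref{e:PAX} exists; only then can \cite{CoatesQLHT} be invoked. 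A minor further inaccuracy: the section $\widetilde A$ is built from the fixed generic $A$ and is not $\torus$-equivariant (this is precisely why $Z_A$ is not $\torus$-invariant and why one must pass to the nonequivariant limit before restricting); fortunately equivariance of the section is not needed for the quantum Lefschetz step.
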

\begin{proof}
The proof follows the same argument as was used for Proposition \ref{p.I-function-Gr}. 
 
 The PAX model $Z_A$ is the zero locus of the regular section 
 \[
 AX \in \Gamma(\Gr(r, E), S_r^\vee\otimes \pi_r^* F).
 \]
Note that after a specialization of $t \mapsto 0$, the function $I_{PAX}(q, q_1, z)$ is precisely the nonequivariant $\lambda \mapsto 0$ limit (with respect to the $\torus$ action) of the Givental--Martin modification of the $(( S_r^\vee\otimes  F)_T, e_\torus(-))$-twisted $I$-function of $\Gr_T$. Here again $\Gr_T$ is the Abelian quotient $\mathcal{H}om(\CC^r, E)\sslash T$ and $( S_r^\vee\otimes  F)_T \to \Gr_T$ is the vector bundle corresponding to $ S_r^\vee\otimes F \to \Gr(r, E)$.  
 
By \cite[Theorem~1.11]{CLS} (after generalizing as in Proposition~\ref{p.I-function-Gr}),
the Givental--Martin modification of the $(( S_r^\vee\otimes  F)_T, e_\torus(-))$-twisted $I$-function of $\Gr_T$ lies on the  $(( S_r^\vee\otimes  F), e_\torus(-))$-twisted Lagrangian cone of $\Gr(r, E)$. We claim that the nonequivariant $\lambda \mapsto 0$ limit of this function is well defined. To see this we note that the second factor in $N^{PAX}_{\beta,\vec{d}}$ vanishes unless, for all $j \leq r$, there exists an $i \leq m$ such that
\[
d_j \geq \beta \cdot x_i,
\]
therefore the third sum in  \eqref{e:PAX} is in fact only over those $(d_1, \ldots, d_r)$ satisfying this property.  Furthermore, because $E^\vee \otimes F$ is ample, we also have that for all $i\leq m$, $k \leq n$, 
\[
\beta \cdot x_i \geq \beta \cdot z_k.
\]
Combining these inequalities, we conclude that for all $j \leq r$, $k \leq n$, 
\[
d_j \geq \beta \cdot z_k.
\]

Thus the factors 
\[
\prod_{j=1}^r \prod_{k=1}^{n} \frac{\prod_{h \leq d_j-\beta\cdot z_k}(y_j-\hat z_k+hz)}{\prod_{h \leq 0}(y_j-\hat z_k+hz)}
\] 
in the expression for $N^{PAX}_{\beta,\vec{d}}$ lie entirely in the numerator, and the nonequivariant limit of $N^{PAX}_{\beta,\vec{d}}$ is well-defined. The quantum Lefschetz theorem, as given in \cite{CoatesQLHT}, then implies the result. 
\end{proof}

For the PAXY model, let $j_{PAXY}: \widehat{Z}_A \rightarrow \op{tot}(S_s \otimes \pi_s^*F)$, and let $\pi_s : \Gr(s, E^\vee) \rightarrow B$. By abuse of notation, let $\hat x_i:=\pi_s^*(x_i)$, $\hat z_j=\pi_s^*(z_j)$, and let $\{ w_i \}_{i=1}^s$ be the Chern roots of $S^\vee_s$. We consider
\begin{align}\label{e:PAXY}
	I_{\widehat{Z}_A}(q, q_1, z)=j^*_{PAXY} \left(\lim_{\lambda \mapsto 0} \sum_\beta q^\beta \sum_d q_1^d\sum _{\substack{(d_1,\dots,d_s)\\ \sum d_i=d}} N^{PAXY}_{\beta,\vec{d}} \cdot p_s^*\pi_s^* J_\beta \right)
\end{align}
where
\begin{align*}
	N^{PAXY}_{\beta,\vec{d}}=&\prod_{\substack{i,j=1\\ i\neq j}}^s 
	\frac{\prod_{h \leq d_i-d_j}(w_i-w_j+hz)}{\prod_{h \leq 0}(w_i-w_j+hz)} \cdot \prod_{j=1}^s \prod_{i=1}^m \frac{\prod_{h \leq 0} (w_j+\hat x_i+hz)}{\prod_{h \leq d_j+\beta\cdot x_i} (w_j+\hat x_i+hz)}\\
	\cdot &\prod_{k=1}^{n}\prod_{j=1}^s\frac{\prod_{h \leq 0}(-w_j-\hat z_k+hz)}{\prod_{h \leq -d_j-\beta\cdot z_k}(-w_j-\hat z_k+hz)}\cdot \prod_{i=1}^m\prod_{k=1}^n \frac{\prod_{h \leq \beta\cdot x_i-\beta\cdot z_k}(\hat x_i-\hat z_k+hz)}{\prod_{h \leq 0}(\hat x_i-\hat z_k+hz)}.
\end{align*}
As before, the map \begin{equation}\label{COV2}
{q}^{\tilde \beta} \mapsto {q }^{\pi_*(\tilde \beta)} {q_1}^{\langle c_1(S_s^\vee), \tilde \beta \rangle},
\end{equation}
 allows us to identify the Novikov ring $\QQ[\![ \op{Eff}(\Gr(s, E^\vee))]\!]$ with a subring of $\QQ[\![\op{Eff}(B)]\!](\!(q_1)\!).$
Under this identification,
$I_{\widehat{Z}_A}(q, q_1, z)$ lies in the Givental space $H^*(\widehat{Z}_A)(\!(z^{-1})\!)$.

We have the following result for the PAXY $I$-function:
\begin{proposition} \label{p.I_PAXY}
The function
	$I_{\widehat{Z}_A}(q, q_1, z)$ lies in the Lagrangian cone of the PAXY model $\widehat{Z}_A$.
\end{proposition}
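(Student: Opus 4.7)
The plan is to deduce Proposition \ref{p.I_PAXY} from the PAX/PAXY correspondence (Theorem \ref{Thm.PAX/PAXY}), bootstrapping off the already-established Proposition \ref{p.I_PAX}. A direct proof via a quantum Lefschetz-type argument is not available: as the introduction notes, the twisting factor $e_\torus^{-1}(\pi_s^*(E\otimes F^\vee))$ appearing in $I_{\widehat Z_A}$ is an inverse-Euler twist by an ample bundle (hence non-concavex), so the strategy used for $I_{Z_A}$ does not carry over.

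First I would invoke Proposition \ref{p.PAXY_res}, which gives an isomorphism $Z_A \cong \widehat Z_A$ under Grassmannian duality. This isomorphism canonically identifies the genus-zero Lagrangian cones of the two resolutions, so it suffices to show that $I_{\widehat Z_A}(q, q_1, z)$, viewed as a point in $H^*(\widehat Z_A)(\!(z^{-1})\!)$, lies on this common cone.

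Next, granting Theorem \ref{Thm.PAX/PAXY}, one writes
\begin{equation*}
    I_{\widehat{Z}_A}(q, q_1, z) \;=\; \Xi(q_1, z)\cdot I_{Z_A}\bigl(q'(q,q_1),\, q_1'(q,q_1),\, z\bigr),
\end{equation*}
where $\Xi = 1$ for $m\geq n+2$, $\Xi = e^{(-1)^r q_1/z}$ for $m=n+1$, and $\Xi = (1+(-1)^r q_1)^{\Psi}$ with $\Psi = r+(c_1(F)-c_1(E))/z$ for $m=n$, together with the explicit change of Novikov variables prescribed by the theorem. By Proposition \ref{p.I_PAX}, the function $I_{Z_A}(q', q_1', z)$ lies on the Lagrangian cone of $Z_A$ as a formal series in $(q', q_1')$. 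It therefore remains to check that the three operations at play---an invertible change of Novikov variables, multiplication by $e^{(-1)^r q_1/z}$, and multiplication by $(1+(-1)^r q_1)^{\Psi}$---each preserve the Lagrangian cone.

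The first two of these reductions are essentially formal: an invertible change of Novikov variables is an automorphism of the Novikov ring and so preserves cone membership, while multiplication by $e^{c/z}$ for $c$ a cohomology class is implemented by the string/divisor flow on the cone. The case $m=n$ is the most delicate. The factor $(1+(-1)^r q_1)^{\Psi}$ splits into a unit $(1+(-1)^r q_1)^r$ times $\exp\bigl(z^{-1}(c_1(F)-c_1(E))\log(1+(-1)^r q_1)\bigr)$, and the coupled change of variables $q' = q\cdot q_1^{\langle c_1(E), -\rangle}/(1+(-1)^r q_1)^{\langle c_1(E) - c_1(F), -\rangle}$ is of precisely the mirror-map / divisor-equation type. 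Recognizing this combined transformation as a symplectic automorphism of the Givental space preserving $\mathcal L_{Z_A}$ along the divisor direction $c_1(E)-c_1(F)$ is the main obstacle, and I would verify it by a direct application of the divisor equation to $I_{Z_A}(q', q_1', z)$, using that the exponent in $\Psi$ is linear in $z^{-1}$ with coefficient a divisor class already appearing in the exponential factor of the change of Novikov variables.
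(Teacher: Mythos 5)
Your overall route is the same as the paper's: identify $Z_A\cong\widehat Z_A$ via Proposition~\ref{p.PAXY_res}, use Theorem~\ref{Thm.PAX/PAXY} to write $I_{\widehat Z_A}$ as a prefactor times $I_{Z_A}$ in new variables, and then absorb the prefactors using string/divisor-type invariances, with Proposition~\ref{p.I_PAX} supplying cone membership. The paper implements the absorption in an essentially equivalent way: it uses the string and divisor equations for the base $J$-function $J_B$ to rewrite the right-hand sides of \eqref{e2} and \eqref{e3} as $I_{Z_A}(q',q_1',u')$ for a shifted insertion $u'$ (a shift by $(-1)^r q_1\mathbf{1}$ when $m=n+1$, and by $f(q_1)\bigl(r\cdot\mathbf{1}+c_1(F)-c_1(E)\bigr)$ with $f(q_1)=\ln(1+(-1)^r q_1)$ when $m=n$), and then cites Proposition~\ref{p.I_PAX}, which holds for every insertion $u$; your cone-level string flow, cone-scaling for the unit factor, and divisor-type transformation coupled to the Novikov rescaling accomplish the same thing.

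The one place where your justification does not hold up is the change of Novikov variables. It is false in general that an invertible substitution of Novikov variables preserves membership in the Lagrangian cone: replacing $q'=q\cdot q_1^{\langle c_1(E),-\rangle}$ by $q'=q\cdot q_1^{\langle D,-\rangle}$ for an arbitrary divisor class $D$ is equally ``invertible,'' but there is no reason it should carry the cone of $Z_A$, written in the $(q',q_1')$ variables, to the cone of $\widehat Z_A$, written in $(q,q_1)$. What is actually needed---and what occupies roughly half of the paper's proof---is the verification that the specific substitution \eqref{paxpaxycov} is exactly the transition between the two coordinatizations \eqref{COV1} and \eqref{COV2} of the \emph{common} Novikov ring of $\Gr(r,E)\cong\Gr(s,E^\vee)$. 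This follows from the tautological sequence $0\to S_r\to \pi_r^*E\to S_s^\vee\to 0$, which gives $c_1(S_s^\vee)-c_1(S_r^\vee)=c_1(E)$ and hence ${q}^{\pi_*(\tilde\beta)}q_1^{\langle c_1(S_s^\vee),\tilde\beta\rangle}={q'}^{\pi_*(\tilde\beta)}{q_1'}^{\langle c_1(S_r^\vee),\tilde\beta\rangle}$ under \eqref{paxpaxycov}; in other words the change of variables is an artifact of viewing $Z_A\cong\widehat Z_A$ inside $\Gr(r,E)$ versus $\Gr(s,E^\vee)$, not a transformation that needs a cone-invariance statement at all. (In the $m=n$ case only the $q_1^{\langle c_1(E),-\rangle}$ part of the substitution is of this coordinate-transition type; the $(1+(-1)^r q_1)$-factor must indeed be fed into the divisor equation, as you propose.) With this verification supplied, your argument goes through and coincides with the paper's.
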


\begin{remark}
    Interestingly, Proposition~\ref{p.I_PAXY}  does not follow from the same argument as Proposition~\ref{p.I_PAX}, due to the fact that the vector bundle $S_s \otimes F$ is not concave. In particular, the nonequivariant limit with respect to the $\torus$-action is not \textit{a priori} well-defined. This was not an issue in Section~\ref{s.twisted_Gr_bdl} because we were working with the $\torus$-equivariant theory. 
    
    Nevertheless, the  GLSM description of the PAXY resolution suggests that this function should lie on the Lagrangian cone of $\widehat{Z}_A$. Remarkably, this fact will follow immediately from Proposition~\ref{p.I_PAX} together with our main comparison result below.
\end{remark}

The following theorem gives the PAX/PAXY correspondence for determinantal varieties.
\begin{theorem} \label{Thm.PAX/PAXY}
	Following the notations in \eqref{e:PAX} and \eqref{e:PAXY}, we have
	\begin{enumerate}
		\item When $m \geq n+2$, then
		\begin{align}\label{e1}
			I_{\widehat{Z}_A}(q, q_1) = I_{Z_A}(q', q_1').
		\end{align}
        under the change of variables 
    \begin{equation}\label{paxpaxycov}
            q'_1=q_1\quad\text{ and }\qquad q'=q\cdot q_1^{\langle c_1(E) , -\rangle}.
	 \end{equation}
		
		\item When $m = n+1$, then
		\begin{align}\label{e2}
			I_{\widehat{Z}_A}(q, q_1) = e^{(-1)^r q_1/z} I_{Z_A}(q', q_1').
		\end{align}
    under the change of variables \eqref{paxpaxycov}.
		
		\item When $m = n$, then
		\begin{align}\label{e3}
			I_{\widehat{Z}_A}(q, q_1) = (1+(-1)^{r} q_1)^{\Psi} I_{Z_A}(q', q_1').
		\end{align}
    under the change of variables 
	\[
	q'_1=q_1\quad\text{ and }\qquad q'=\frac{q\cdot q_1^{\langle c_1(E) , -\rangle}}{(1+(-1)^{r} q_1)^{\langle c_1(E), - \rangle-\langle c_1(F),-\rangle}},
	\]
    where $\Psi=(c_1(F)-c_1(E))/z+r$.

\end{enumerate}
\end{theorem}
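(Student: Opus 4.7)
The plan is to mirror the proof of Theorem~\ref{Them.building_block} in Section~\ref{s.proof:Gr}, working fixed-point by fixed-point under the $\torus$-action induced on $E$ and $F$. Since $I_{Z_A}$ and $I_{\widehat{Z}_A}$ are, by construction, pullbacks under $j_{PAX}^*$ and $j_{PAXY}^*$ of hypergeometric classes living on the ambient Grassmannian bundles $\Gr(r,E)$ and $\tot(S_s\otimes\pi_s^*F)$ respectively, it suffices to establish the identities on the ambient level and restrict at the end. The ambient $\torus$-fixed loci are exactly those described in Lemmas~\ref{lem:fix1}--\ref{lem:fix2}, each isomorphic to $B$ and indexed by $\mathbf{i}\in\mathbf{K}_r$ (PAX side) and $\mathbf{j}\in\mathbf{K}_s$ (PAXY side), and Grassmannian duality supplies the matching $\mathbf{i}\leftrightarrow\mathbf{i}^{\complement}$. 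By symmetry it is enough to compare the two sides after restriction to the fixed component $F_\mathbf{0}$.

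Pulling back $N^{PAX}_{\beta,\vec d}$ and $N^{PAXY}_{\beta,\vec d}$ to $F_\mathbf 0$ via Corollaries~\ref{cor:fix1}--\ref{cor:fix2}, and performing the substitutions $a_j=d_j-\beta\cdot x_j$ (PAX) and $a_j=d_j+\beta\cdot x_{r+j}$ (PAXY), we repeat the algebraic reorganization of Lemmas~\ref{lem:pullback1}--\ref{lem:pullback2}. The expected outcome is a factorization
\[
\iota^*_{F_\mathbf 0}(N^{PAX}_{\beta,\vec d})=\widetilde N^{PAX}_{\beta,a}\cdot C'_\beta,\qquad \iota^*_{\widehat F_\mathbf 0}(N^{PAXY}_{\beta,\vec d})=\widetilde N^{PAXY}_{\beta,a}\cdot C'_\beta,
\]
with a common factor $C'_\beta$ (independent of $a=\sum a_j$) analogous to $C_\beta$ of Section~\ref{s.proof:Gr}, while the sums $\widetilde N^{PAX}_{\beta,a}$ and $\widetilde N^{PAXY}_{\beta,a}$ collect the $a$-dependent terms. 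The new bookkeeping relative to Section~\ref{s.proof:Gr} is that PAX carries an Euler twist by $S_r^\vee\otimes\pi_r^*F$ rather than an inverse Euler by $S_r\otimes\pi_r^*F^\vee$, and that $N^{PAXY}$ picks up the extra factor $\prod_{i,k}\frac{\prod_{h\leq \beta\cdot x_i-\beta\cdot z_k}(\hat x_i-\hat z_k+hz)}{\prod_{h\leq 0}(\hat x_i-\hat z_k+hz)}$ coming from the $e_\torus^{-1}(\pi_s^*(E\otimes F^\vee))$ twist; these pieces must be absorbed into $C'_\beta$ so that the two sides share a common prefactor.

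The heart of the proof is the combinatorial identity between $\widetilde N^{PAX}_{\beta,a}$ and $\widetilde N^{PAXY}_{\beta,a}$. Under the substitutions $\lambda_{f_i}\leftrightarrow x_i/z+\beta\cdot x_i$ and $\eta_k\leftrightarrow z_k/z+\beta\cdot z_k$, these identities should reduce to the contour-integral residue lemmas of Zhang (Lemma~A.2, Corollary~A.3, Lemma~A.4 of \cite{Zha}), exactly as in the proof of Proposition~\ref{p.Gr_dual}: equality when $m\geq n+2$, a telescoping correction $\sum_p z^{p-a}(-1)^{r(a-p)}/(a-p)!$ when $m=n+1$, and a hypergeometric correction $\prod_h (\Psi+\Psi_\beta-h)/(p-h)\cdot (-1)^{rp}$ when $m=n$. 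The principal obstacle is verifying that Zhang's residue argument survives both the sign change in the $z_k$-factors (from $F^\vee$ to $F$) and the incorporation of the extra $(\hat x_i-\hat z_k)$-factor: one has to track the location of poles carefully and confirm that the crucial telescoping at the boundary of the relevant simplex still produces the prefactors $e^{(-1)^r q_1/z}$ and $(1+(-1)^r q_1)^\Psi$ in the boundary cases, with $\Psi=r+(c_1(F)-c_1(E))/z$.

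Multiplying by $C'_\beta\cdot J_\beta$, summing over $\beta$, and reindexing via the Novikov identifications \eqref{COV1}--\eqref{COV2} as in equation~\eqref{eq:q1} of the proof of Theorem~\ref{Them.building_block}, the shift $q_1^{\sum_{i=1}^m\beta\cdot x_i}=q_1^{-\langle c_1(E),\beta\rangle}$ between the exponents $a-\sum\beta\cdot x_{r+j}$ on the PAXY side and $a+\sum\beta\cdot x_j$ on the PAX side yields the change of variables $q'=q\cdot q_1^{\langle c_1(E),-\rangle}$ as stated in \eqref{paxpaxycov}; the sign is opposite to \eqref{cov1} precisely because the roles of $\Gr(r,E)$ and $\Gr(s,E^\vee)$ are interchanged in the present theorem relative to Theorem~\ref{Them.building_block}. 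In the boundary case $m=n$, the further renormalization of $q$ by $(1+(-1)^r q_1)^{\langle c_1(E),-\rangle-\langle c_1(F),-\rangle}$ absorbs the $\beta$-dependence $\Psi_\beta$ of the hypergeometric prefactor. As an immediate byproduct, Proposition~\ref{p.I_PAXY} follows from Proposition~\ref{p.I_PAX}, since the established change of variables preserves the Lagrangian cone up to the explicit prefactors of cases (2) and (3).
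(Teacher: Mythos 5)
Your proposal follows essentially the same route as the paper's proof: localize to the $\torus$-fixed loci, reorganize the pulled-back modification factors (as in Lemmas~\ref{lem:pullback1}--\ref{lem:pullback2}) into a common prefactor times $a$-dependent sums, reduce the three cases to Zhang's Lemma~A.2, Corollary~A.3, and Lemma~A.4, and read off the change of variables from the Novikov reindexing $q_1^{-\sum_{i=1}^m\beta\cdot x_i}=q_1^{\langle c_1(E),\beta\rangle}$. The one ``principal obstacle'' you leave open --- whether Zhang's identities survive the sign change in the $z_k$-factors and the extra $(\hat x_i-\hat z_k)$-factor --- dissolves in the paper's treatment: the portion of that factor with $1\leq i\leq r$ is absorbed into the common prefactor $\widetilde C_\beta$, the rest merges into the $a_j$-dependent products, and Zhang's lemmas are then applied verbatim under the substitutions $\lambda_{f_i}\leftrightarrow -(\tfrac{x_i}{z}+\beta\cdot x_i)$, $\eta_k\leftrightarrow -(\tfrac{z_k}{z}+\beta\cdot z_k)$ with $r$ and $s$ interchanged, so no new residue analysis is required.
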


\begin{remark}

Notice that the change of variables in this theorem is slightly different from that in Theorem~\ref{Them.building_block}.  This is because the $I$-functions for $Z_A$ and $\widehat{Z}_A$ are not the same as the $I$-functions appearing in Section~\ref{s.generalized-Seiberg-duality}.  The similarity between the change of variables in Theorem~\ref{Thm.PAX/PAXY} and Theorem~\ref{Them.building_block} is due to the fact that the equivariant lift of  $I_{Z_A}(q', q_1')$ (resp. $I_{\widehat{Z}_A}(q, q_1)$) closely resembles the $I$-function of $\widehat{Z}$ (resp. $Z$) after replacing $E$ with $E^\vee$ and $r$ with $s$.
\end{remark}

Proposition~\ref{p.I_PAXY} will follow from Theorem~\ref{Thm.PAX/PAXY}.

\begin{proof}[Proof of Proposition~\ref{p.I_PAXY}]
By Proposition~\ref{p.PAXY_res}, $Z_A \cong \widehat Z_A,$ so by Theorem~\ref{Thm.PAX/PAXY}, it suffices to show that the right hand side of equations~\eqref{e1},  \eqref{e2}, and \eqref{e3} lie in the Lagrangian cone of $Z_A$ in each case.

We first show that the change of variables \eqref{paxpaxycov}
is simply a result of how we chose coordinates for the respective Novikov rings.  Namely, the canonical isomorphism
$\Gr(r, E) = \Gr(s, E^\vee),$
induces an identification 
\[
\QQ[\![ \op{Eff}(\Gr(r, E))]\!] = \QQ[\![ \op{Eff}(\Gr(s, E^\vee))]\!].
\]

By \eqref{COV1} and \eqref{COV2},  for $\tilde \beta \in H_2(\Gr(r, E)) = H_2(\Gr(s, E^\vee))$, we can write $q^{\tilde \beta}$ as either 
$ {q '}^{\pi_*(\tilde \beta)} {q_1'}^{\langle c_1(S_r^\vee), \tilde \beta \rangle}$
or 
${q}^{\pi_*(\tilde \beta)} {q_1}^{\langle c_1(S_s^\vee), \tilde \beta \rangle}.$
We will show that the composition 
\[
{q}^{\pi_*(\tilde \beta)} {q_1}^{\langle c_1(S_s^\vee), \tilde \beta \rangle} \mapsto  q^{\tilde \beta} \mapsto {q'}^{\pi_*(\tilde \beta)} {q_1 '}^{\langle c_1(S_r^\vee), \tilde \beta \rangle}
\] 
is exactly the change of variables \eqref{paxpaxycov}.
The tautological sequence on $\Gr(r, E)$ is 
\[
0 \to S_r \to E \to S_s^\vee \to 0,
\]
i.e., the tautological bundle on $\Gr(s, E^\vee)$ is canonically identified with the quotient bundle on $\Gr(r, E)$.  Consequently, 
\[
c_1(S_s^\vee) - c_1(S_r^\vee) = c_1(E).
\]
Therefore
\begin{align*}
{q}^{\pi_*(\tilde \beta)} {q_1}^{\langle c_1(S_s^\vee), \tilde \beta \rangle}
 = & 
{q}^{\pi_*(\tilde \beta)} {q_1}^{\langle c_1(E), \tilde \beta \rangle + \langle c_1(S_r^\vee), \tilde \beta \rangle}
\\ = & 
{q }^{\pi_*(\tilde \beta)} {q_1}^{-\sum_{i=1}^m\langle x_i, \tilde \beta \rangle}{q_1}^{ \langle c_1(S_r^\vee), \tilde \beta \rangle} \\ = & 
{q'}^{\pi_*(\tilde \beta)} {q_1 '}^{\langle c_1(S_r^\vee), \tilde \beta \rangle}.
\end{align*}
under the change of variables \eqref{paxpaxycov}.  From this we see that the change of variables \eqref{paxpaxycov} is just an artifact of whether we are viewing $Z_A \cong \widehat Z_A$ as a subvariety of  $\Gr(r, E)$ or $\Gr(s, E^\vee)$.
Equation~\eqref{e1} then implies the result when $m\geq n+2$.

The remaining cases will follow from the string and divisor equations.  Recall that the (big) $J$-function of $B$, $
J_B(q, u, z)$, depends on choice of class $u \in H^*(B)$ and so, therefore, do  $I_{Z_A}(q', q_1')$ and $I_{\widehat{Z}_A}(q, q_1)$.  Let us write $I_{Z_A}(q', q_1', u)$ and $I_{\widehat{Z}_A}(q, q_1, u)$ to make this dependence explicit.

The string equation implies that, if $u = u_0 \mathbf{1}+ \tilde u$, where $\mathbf{1}$ is the fundamental class in $H^0(B)$, then 
\[
J_B(q, u, z) =e^{u_0/z}\sum_\beta q^\beta J_\beta(\tilde u, z).
\]
Equation \eqref{e2} may therefore be written as 
\[
I_{\widehat{Z}_A}(q, q_1, u) = I_{Z_A}(q', q_1', (-1)^rq_1 \mathbf{1} + u).
\]
The right hand side lies in the Lagrangian cone of $Z_A$ by Proposition~\ref{p.I_PAX}, which implies the result for $m=n+1$.

The divisor equation implies that, if $u = u_2 + \tilde u$ with $u_2 \in H^2(B)$, then 
\[
J_B(q, u, z) =e^{u_2/z}\sum_\beta q^\beta e^{\langle u_2, \beta\rangle}  J_\beta(\tilde u, z).
\]
Equation~\eqref{e3} may then be written as
\[
I_{\widehat{Z}_A}(q, q_1, u) =  I_{Z_A}\left(q', q_1', f(q_1)\left(\textstyle{r\cdot \mathbf{1} +\sum_{j=1}^m x_j - \sum_{k=1}^n z_k}\right)+ u\right),
\]
with $f(q_1) = \ln(1+(-1)^rq_1).$ Proposition~\ref{p.I_PAX} now implies the result when $m=n$
\end{proof}

\subsection{The proof of Theorem \ref{Thm.PAX/PAXY}}

Let 
\[
\widetilde I_{Z_A} = 
\sum_\beta q^\beta \sum_d q_1^d\sum _{\substack{(d_1,\dots,d_r)\\ \sum d_i=d}} N^{PAX}_{\beta,\vec{d}} \cdot \pi_r^* J_\beta
\]
and 
\[
\widetilde I_{\widehat{Z}_A} = 
\sum_\beta q^\beta \sum_d q_1'^d\sum _{\substack{(d_1,\dots,d_s)\\ \sum d_i=d}} N^{PAXY}_{\beta,\vec{d}} \cdot p_s^*\pi_s^* J_\beta ,
\]
so that $I_{Z_A} = j_{PAX}^* \lim_{\lambda \mapsto 0} \widetilde I_{Z_A}$ and similarly for $I_{\widehat{Z}_A}$.  

Rather than comparing $I_{Z_A}$ and $I_{\widehat{Z}_A}$ directly, we will instead compare $\pi^* \widetilde I_{Z_A} $ and $ \widetilde I_{\widehat{Z}_A},$
where $\pi$ is the composition 
\[
\pi: \op{tot}(S_s \otimes \pi_s^* F) \to \Gr(s, E^\vee) \xrightarrow{\cong} \Gr(r, E).
\]
We will show that  $\pi^* \widetilde I_{Z_A} $ coincides with $\widetilde I_{\widehat{Z}_A}$ under the specified change of variables and prefactor.

Then, by the commutative diagram:
\[
\begin{tikzcd}
	\widehat Z_A \ar[r, "j_{PAXY}"] \ar[d, "\cong"] & \op{tot}(S_s \otimes  F) \ar[r] \ar[dr, "\pi"] & \Gr(s, E^\vee) \ar[d, "\cong"] \\
	Z_A  \ar[rr, "j_{PAX}"] & & \Gr(r, E),
\end{tikzcd}
\]
it follows that  \[I_{Z_A} = j_{PAX}^* \lim_{\lambda \mapsto 0}\widetilde I_{Z_A} = j_{PAXY}^* \lim_{\lambda \mapsto 0}\widetilde I_{\widehat{Z}_A} = I_{\widehat{Z}_A}.\]

We parallel the argument of Section \ref{s.proof:Gr} by restricting to the fixed points of the $\torus$-action. We first write each of the modification factors in a more useful form.

\begin{lemma} \label{l.PAX_pullback}
	For $1 \leq j \leq r$, let $a_j = d_j - \beta \cdot x_j$, then we have
	\begin{align*}
		\iota_{F_\mathbf{0}}^*(N^{PAX}_{\beta,\vec{d}})=&\prod_{\substack{i,j=1\\ i\neq j}}^r \frac{\prod_{h \leq a_j - a_i}(x_j-x_i+(\beta\cdot x_j-\beta\cdot x_i+h)z)}{\prod_{h \leq 0}(x_j-x_i+(\beta\cdot x_j-\beta\cdot x_i+h)z)}\\
		\cdot & \prod_{j=1}^r  \frac{\prod_{k=1}^n\prod_{h=1}^{a_j} (x_j- z_k+(\beta\cdot x_j-\beta\cdot z_k +h) z)}{\prod_{i=1}^m\prod_{h=1}^{a_j} (x_j-x_i+(\beta\cdot x_j-\beta\cdot x_i+h)z)}\\
		\cdot & \prod_{j=1}^r\prod_{i=1}^s\frac{\prod_{h\leq 0}(x_j-x_{r+i}+h z)}{\prod_{h\leq \beta\cdot x_j-\beta\cdot x_{r+i}}(x_j-x_{r+i}+h z)}\\
		\cdot & \prod_{j=1}^r\prod_{k=1}^n\frac{\prod_{h\leq \beta\cdot x_j-\beta\cdot z_k}(x_j-z_k+h z)}{\prod_{h\leq 0}(x_j-z_k+h z)}.
	\end{align*}
\end{lemma}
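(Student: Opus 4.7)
The proof will closely parallel the computation carried out in Lemma~\ref{lem:pullback1}, the key difference being that the third factor of $N^{PAX}_{\beta,\vec d}$ has numerator and denominator interchanged relative to the third factor of $N_{\beta,\vec d}$ from Section~\ref{s.generalized-Seiberg-duality} (because here $F$ contributes a positive twist rather than the inverse one). The plan is to apply Corollary~\ref{cor:fix1} to replace each $y_j$ by $x_j$, then change variables $a_j = d_j - \beta\cdot x_j$ and systematically reindex each product so that the shift $h \mapsto h - \beta\cdot x_j + \beta\cdot x_i$ (or $h - \beta\cdot x_j + \beta\cdot z_k$) moves everything into the form displayed in the lemma.

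Concretely, after pulling back by $\iota_{F_\mathbf{0}}$ the factor $y_j - \hat x_i$ becomes $x_j - x_i$ and similarly for the $\hat z_k$ terms. For the first factor I will substitute $h = \beta\cdot x_j - \beta\cdot x_i + h'$ exactly as in Lemma~\ref{lem:pullback1}, producing the desired numerator and a leftover denominator. For the second factor $\prod_{i=1}^m$, I will split the index range into three pieces: (i) $1\leq i\leq r$ with $i\neq j$, which when combined with the leftover from the first factor produces the first line of the claimed formula together with a factor $\prod_{h=1}^{a_j}(x_j-x_i+\cdots)$; (ii) $i = j$, which contributes only the missing diagonal term $\prod_{h=1}^{a_j}(x_j-x_j+\cdots)$, so that the $m$-fold product in the denominator of the second line of the lemma can be taken over all $i\in\{1,\dots,m\}$ without restriction; (iii) $r+1\leq i\leq m$ (i.e.\ $i = r+k$ with $1\leq k\leq s$), which after the same splitting produces the third line of the formula together with the remaining factors needed for the second line.

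The new step relative to Lemma~\ref{lem:pullback1} is the treatment of the third factor of $N^{PAX}_{\beta,\vec d}$, namely $\prod_{h\leq d_j - \beta\cdot z_k}(x_j - z_k + hz)/\prod_{h\leq 0}(x_j - z_k + hz)$. I will write this as the product
\begin{align*}
\frac{\prod_{h\leq \beta\cdot x_j - \beta\cdot z_k}(x_j-z_k+hz)}{\prod_{h\leq 0}(x_j-z_k+hz)}\cdot\frac{\prod_{h\leq d_j - \beta\cdot z_k}(x_j-z_k+hz)}{\prod_{h\leq \beta\cdot x_j - \beta\cdot z_k}(x_j-z_k+hz)},
\end{align*}
then reindex the second piece by $h = \beta\cdot x_j - \beta\cdot z_k + h'$ so that $d_j - \beta\cdot z_k = a_j + (\beta\cdot x_j - \beta\cdot z_k)$ becomes the upper limit $a_j$ in the new variable. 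The second piece collapses to the finite product $\prod_{h'=1}^{a_j}(x_j - z_k + (\beta\cdot x_j - \beta\cdot z_k + h')z)$, which is exactly the numerator in the second line of the target formula, while the first piece is precisely the fourth line.

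The proof is essentially bookkeeping, and the only real subtlety is keeping track of which of the split pieces assembles into which line of the final formula. I expect no conceptual obstacle: every manipulation is a reindexing of a telescoping ratio of products, with the convergence/vanishing checks (using $a_j\geq 0$ whenever $N^{PAX}_{\beta,\vec d}\neq 0$) handled by the same argument as in Remark~\ref{finitesum1} and the proof of Lemma~\ref{lem:pullback1}. Assembling the four pieces in the correct order yields the stated expression.
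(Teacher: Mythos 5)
Your proposal is correct and follows essentially the same route as the paper: pull back via Corollary~\ref{cor:fix1}, reuse the Lemma~\ref{lem:pullback1} manipulations verbatim for the first two factors, and handle the third factor by exactly the splitting $\frac{\prod_{h\leq \beta\cdot x_j-\beta\cdot z_k}(\cdot)}{\prod_{h\leq 0}(\cdot)}\cdot\frac{\prod_{h\leq d_j-\beta\cdot z_k}(\cdot)}{\prod_{h\leq \beta\cdot x_j-\beta\cdot z_k}(\cdot)}$ followed by the reindexing $h=\beta\cdot x_j-\beta\cdot z_k+h'$ that the paper uses. No gaps; the bookkeeping and the vanishing check $a_j\geq 0$ are handled as in the paper.
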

\begin{proof}
	By Lemma \ref{lem:fix1}, we have   
	\begin{align*}
		\iota_{F_\mathbf{0}}^*(N^{PAX}_{\beta, \vec{d}})=&\prod_{\substack{i,j=1\\ i\neq j}}^r \frac{\prod_{h\leq d_j-d_i}(x_j-x_i+h z)}{\prod_{h\leq 0}(x_j-x_i+h z)} \cdot \prod_{j=1}^r \prod_{i=1}^m \frac{\prod_{h\leq 0} (x_j- x_i+hz)}{\prod_{h\leq d_j-\beta\cdot x_i} (x_j- x_i+hz)} \\
		\cdot & \prod_{j=1}^r \prod_{k=1}^{n} \frac{\prod_{h\leq d_j-\beta\cdot z_k}(x_j-z_k+hz)} {\prod_{h\leq 0}(x_j-z_k+hz)}.
	\end{align*}
	Exactly the same computations as the proof of Corollary \ref{lem:pullback1} can be used on the first two factors. Similar computation can be used on the last factor by writing 
	\begin{align*}
		&\frac{\prod_{h\leq \beta\cdot x_j-\beta\cdot z_k}(x_j- z_k+hz)}{\prod_{h\leq 0}(x_j- z_k+hz)}\cdot \frac{\prod_{h\leq d_j-\beta\cdot z_k}(x_j- z_k+hz)}{\prod_{h\leq \beta\cdot x_j-\beta\cdot z_k}(x_j- z_k+hz)} \\ 
		=&\frac{\prod_{h\leq \beta\cdot x_j-\beta\cdot z_k}(x_j- z_k+hz)}{\prod_{h\leq 0}(x_j- z_k+hz)}\cdot \prod_{h=1}^{a_j}(x_j- z_k+(\beta\cdot x_j-\beta\cdot z_k+h)z).
	\end{align*}
	Putting it all together, we obtain the conclusion. 
\end{proof}

Now we turn our attention to the PAXY model and rewrite the modification factor in a similar manner. 

\begin{lemma}\label{l.PAXY_pullback}
	For $1\leq j\leq s$, we set $a_j=d_j + \beta\cdot x_{r+j}$, then we have
	\begin{align*}
		\iota_{\widehat{F}_\mathbf{0}}^*(N^{PAXY}_{\beta, \vec{d}})=&\prod_{\substack{i,j=1\\ i\neq j}}^s \frac{\prod_{h \leq a_j-a_i}(x_{r+i}-x_{r+j}+(\beta\cdot x_{r+i}-\beta\cdot x_{r+j}+h)z)}{\prod_{h \leq 0}(x_{r+i}-x_{r+j}+(\beta\cdot x_{r+i}-\beta\cdot x_{r+j}+h)z)}\\
		\cdot & \prod_{j=1}^s  \frac{\prod_{k=1}^n\prod_{h=0}^{a_j-1} (x_{r+j}- z_k+(\beta\cdot x_{r+j}-\beta\cdot z_k - h) z)}{\prod_{i=1}^m\prod_{h=1}^{a_j} (x_i-x_{r+j}+(\beta\cdot x_i-\beta\cdot x_{r+j}+h)z)}\\
		\cdot & \prod_{j=1}^s\prod_{i=1}^r\frac{\prod_{h\leq 0}(x_i-x_{r+j}+h z)}{\prod_{h\leq \beta\cdot x_i-\beta\cdot x_{r+j}}(x_i-x_{r+j}+h z)}\\
		\cdot & \prod_{i=1}^r\prod_{k=1}^n\frac{\prod_{h\leq \beta\cdot x_i-\beta\cdot z_{k}}(x_i-z_k+h z)}{\prod_{h\leq 0}(x_i-z_k+h z)}.
	\end{align*}
\end{lemma}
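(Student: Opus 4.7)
The approach will closely parallel the proof of Lemma~\ref{lem:pullback2} in Section~\ref{s.proof:Gr}, with the key new ingredient being the fourth factor in $N^{PAXY}_{\beta,\vec d}$ (the one involving $\hat x_i - \hat z_k$) which was not present in the analogous computation there. The main observation is that this extra factor splits according to whether its $i$-index lies in $\{1,\dots,r\}$ or in $\{r+1,\dots,m\}$, and that the second half will exactly cancel an otherwise-unwanted infinite piece coming from the third factor of $N^{PAXY}_{\beta,\vec d}$.

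First, I apply Corollary~\ref{cor:fix2}, together with the obvious identifications $\iota^*(\hat x_i)=x_i$ and $\iota^*(\hat z_k)=z_k$, to write $\iota_{\widehat F_{\mathbf 0}}^*(N^{PAXY}_{\beta,\vec d})$ as an explicit product in terms of $x_i$, $z_k$, $d_j$, $z$. Then I substitute $a_j = d_j + \beta\cdot x_{r+j}$ and note that the second factor forces $a_j \geq 0$.  For the first factor and for the second factor in the ranges $1\le i\le r$, $i = r+j$, and $r<i\le m$ with $i\ne r+j$, I apply exactly the same reindexings $h = \beta\cdot x_{r+i}-\beta\cdot x_{r+j} + h'$ and $h = \beta\cdot x_i - \beta\cdot x_{r+j}+h'$ as were used in the proof of Lemma~\ref{lem:pullback2}. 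These yield the first line of the target expression, the denominator of the second line, and the third line verbatim.

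The new part is the treatment of the third and fourth factors together. Pulling back the third factor gives
\[
\prod_{k=1}^n\prod_{j=1}^s \frac{\prod_{h\le 0}(x_{r+j}-z_k+hz)}{\prod_{h\le -a_j+\beta\cdot x_{r+j}-\beta\cdot z_k}(x_{r+j}-z_k+hz)}.
\]
I split this into the ratio $\tfrac{\prod_{h\le 0}}{\prod_{h\le \beta\cdot x_{r+j}-\beta\cdot z_k}}$ times $\tfrac{\prod_{h\le \beta\cdot x_{r+j}-\beta\cdot z_k}}{\prod_{h\le -a_j+\beta\cdot x_{r+j}-\beta\cdot z_k}}$; the substitution $h=\beta\cdot x_{r+j}-\beta\cdot z_k-h'$ turns the second of these into the finite product $\prod_{h'=0}^{a_j-1}(x_{r+j}-z_k+(\beta\cdot x_{r+j}-\beta\cdot z_k-h')z)$, which is precisely the numerator in the second line of the target formula. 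Meanwhile the fourth factor, after pullback, splits as the product over $1\le i\le r$ (which becomes the last line of the target) times the product over $i=r+j$ with $1\le j\le s$; this latter piece is exactly the reciprocal of $\tfrac{\prod_{h\le 0}}{\prod_{h\le \beta\cdot x_{r+j}-\beta\cdot z_k}}$ from the third factor, so these cancel term-by-term.

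Assembling the contributions produces the claimed formula. The only real obstacle is bookkeeping: correctly tracking the limits of the products through each reindexing and ensuring the signs and shifts match between the infinite-product piece of the third factor and the $i=r+j$ portion of the fourth factor. No new conceptual input beyond what appears in Lemma~\ref{lem:pullback2} is required.
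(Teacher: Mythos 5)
Your proposal is correct and follows essentially the same route as the paper: the first two factors are handled exactly as in Lemma~\ref{lem:pullback2}, and the third factor is combined with the $i=r+j$ part of the pulled-back fourth factor (your "split and cancel" is the same algebra the paper performs by multiplying those pieces and reindexing with $h=\beta\cdot x_{r+j}-\beta\cdot z_k-h'$), leaving the $1\le i\le r$ part of the fourth factor as the last line.
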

\begin{proof}
	By Lemma \ref{lem:fix2}, we have
	\begin{align}
		\iota^*_{\widehat{F}_\mathbf{0}} (N^{PAXY}_{\beta, \vec{d}}) 
            =& 
        \prod_{\substack{i,j=1\\ i\neq j}}^s 
		\frac{\prod_{h\leq d_j-d_i}(-x_{r+j}+x_{r+i}+hz)}{\prod_{h\leq 0}(-x_{r+j}+x_{r+i}+hz)} 
            \cdot 
        \prod_{j=1}^s \prod_{i=1}^m \frac{\prod_{h\leq 0} (-x_{r+j}+ x_i+hz)}{\prod_{h\leq d_j+\beta\cdot x_i} (-x_{r+j}+ x_i+hz)} \nonumber \\
		\cdot &
        \prod_{k=1}^{n}\prod_{j=1}^s\frac{\prod_{h\leq 0}(x_{r+j}- z_k+hz)}{\prod_{h\leq -d_j-\beta\cdot z_k}(x_{r+j}- z_k+hz)}
            \cdot 
        \prod_{i=1}^m\prod_{k=1}^n \frac{\prod_{h\leq \beta\cdot x_i-\beta\cdot z_k}( x_i- z_k+hz)}{\prod_{h\leq 0}( x_i- z_k+hz)}. \nonumber 
	\end{align}
	The first two factors can be dealt with as the proof of Lemma~\ref{lem:pullback2}. Finally, we look at the last two factors in the second line of the above formula. From the last factor, when $1\leq i\leq r$, we leave them as they are. For the rest, we can replace $i$ with $r+j$ for $1\leq j\leq s$, and write the last two factors as  
	\[
	\frac{\prod_{h\leq \beta\cdot x_{r+j} - \beta\cdot z_k}(x_{r+j}-z_k+hz)}{\prod_{h\leq -d_j-\beta\cdot z_k}(x_{r+j} - z_k+hz)}.
	\]
	We make the substitution $h=h'-\beta\cdot x_{r+j}+\beta\cdot z_{k}$ to obtain
	\[
	\prod_{h=0}^{a_j-1} (x_{r+j} - z_k+(\beta\cdot x_{r+j} - \beta\cdot z_k - h)z).
	\]
	Putting it all together, we arrive at the conclusion.
\end{proof}

Again, similar to what was done in the previous section, we denote 
\begin{align*}
\widetilde C_{\beta} :=  & \prod_{j=1}^s\prod_{i=1}^r\frac{\prod_{h\leq 0}(x_i-x_{r+j}+hz)}{\prod_{h\leq \beta\cdot x_i-\beta\cdot x_{r+j}}(x_i-x_{r+j}+hz)}\\
		\cdot & \prod_{i=1}^r\prod_{k=1}^n\frac{\prod_{h\leq \beta\cdot x_i-\beta\cdot z_{k}}(x_i-z_k+hz)}{\prod_{h\leq 0}(x_i-z_k+hz)}.
\end{align*}

Following Lemmas \ref{l.PAX_pullback} and \ref{l.PAXY_pullback} we can write 
\begin{align*}
\iota_{\widehat{F}_\mathbf{0}}^*(N^{PAXY}_{\beta, \vec{d}})=&\prod_{\substack{i,j=1\\ i\neq j}}^s \frac{\prod_{h \leq a_j-a_i} z(-(\tfrac{x_{r+j}}{z}+\beta\cdot x_{r+j})+(\tfrac{x_{r+i}}{z}+\beta\cdot x_{r+i})+h)}{\prod_{h \leq 0} z(-(\tfrac{x_{r+j}}{z}+\beta\cdot x_{r+j})+(\tfrac{x_{r+i}}{z}+\beta\cdot x_{r+i})+h)}\\
		\cdot & \prod_{j=1}^s  \frac{\prod_{k=1}^n\prod_{h=0}^{a_j-1} z(-( \tfrac{z_k}{z}+\beta\cdot z_k)+(\tfrac{x_{r+j}}{z}+\beta\cdot x_{r+j}) - h)}{\prod_{i=1}^m\prod_{h=1}^{a_j} z(-(\tfrac{x_{r+j}}{z}+\beta\cdot x_{r+j}) + (\tfrac{x_i}{z}+\beta\cdot x_i)+h)}\cdot \widetilde C_{ \beta}
\end{align*}
under the substitution $a_j=d_j+\beta\cdot  x_{r+j}$ for $1\leq j\leq s$. Having done this, we define
\begin{align*}
N^{PAXY}_{ \beta,a} := z^{(n-m)a}\sum_{\sum a_i=a} &\left( \prod_{\substack{i,j=1\\ i\neq j}}^s \frac{\prod_{h \leq a_j-a_i} (-(\tfrac{x_{r+j}}{z}+\beta\cdot x_{r+j})+(\tfrac{x_{r+i}}{z}+\beta\cdot x_{r+i})+h)}{\prod_{h \leq 0} (-(\tfrac{x_{r+j}}{z}+\beta\cdot x_{r+j})+(\tfrac{x_{r+i}}{z}+\beta\cdot x_{r+i})+h)}\right.\\
	\cdot & \left.\prod_{j=1}^s  \frac{\prod_{k=1}^n\prod_{h=0}^{a_j-1} (-( \tfrac{z_k}{z}+\beta\cdot z_k)+(\tfrac{x_{r+j}}{z}+\beta\cdot x_{r+j}) - h)}{\prod_{i=1}^m\prod_{h=1}^{a_j} (-(\tfrac{x_{r+j}}{z}+\beta\cdot x_{r+j}) + (\tfrac{x_i}{z}+\beta\cdot x_i)+h)} 
 \right).
\end{align*}
And similarly with 
\begin{align*}
\iota_{F_\mathbf{0}}^*(N^{PAX}_{\beta,\vec{d}})=&\prod_{\substack{i,j=1\\ i\neq j}}^r \frac{\prod_{h \leq a_j - a_i} (-(\tfrac{x_i}{z}+\beta\cdot x_i)+(\tfrac{x_j}{z}+\beta\cdot x_j)+h)}{\prod_{h \leq 0} (-(\tfrac{x_i}{z}+\beta\cdot x_i)+(\tfrac{x_j}{z}+\beta\cdot x_j)+h)}\\
		\cdot & \prod_{j=1}^r  \frac{\prod_{k=1}^n\prod_{h=1}^{a_j} (-( \tfrac{z_k}{z}+\beta\cdot z_k)+(\tfrac{x_j}{z}+\beta\cdot x_j) +h)}{\prod_{i=1}^m\prod_{h=1}^{a_j} (-(\tfrac{x_i}{z}+\beta\cdot x_i)+(\tfrac{x_j}{z}+\beta\cdot x_j)+h)}
\end{align*} 
under the substitution $a_j=d_j-\beta\cdot L_{j}^\vee$ for $1\leq j\leq s$. 
So we can write
\begin{align*}
N^{PAX}_{\beta,a} :=  z^{(n-m)a}\sum_{\sum a_i=a} & \left(\prod_{\substack{i,j=1\\ i\neq j}}^r \frac{\prod_{h \leq a_j - a_i} (-(\tfrac{x_i}{z}+\beta\cdot x_i)+(\tfrac{x_j}{z}+\beta\cdot x_j)+h)}{\prod_{h \leq 0} (-(\tfrac{x_i}{z}+\beta\cdot x_i)+(\tfrac{x_j}{z}+\beta\cdot x_j)+h)}\right.\\
		\cdot & \left. \prod_{j=1}^r  \frac{\prod_{k=1}^n\prod_{h=1}^{a_j} (-(\tfrac{z_k}{z} +\beta\cdot z_k)+(\tfrac{x_j}{z}+\beta\cdot x_j) +h)}{\prod_{i=1}^m\prod_{h=1}^{a_j} (-(\tfrac{x_i}{z}+\beta\cdot x_i)+(\tfrac{x_j}{z}+\beta\cdot x_j)+h)}
\right).
\end{align*}

\begin{proposition} \label{p.pax/paxy}
\begin{enumerate}
	\item \label{item:mn22} When $m \geq n+2$, then 
	\[
	N^{PAXY}_{\beta,a}=N^{PAX}_{\beta,a}.
	\]
		
	\item When $m = n+1 $, then 
	\[
	N^{PAXY}_{\beta,a}=\sum_{p=0}^a \frac{(-1)^{r(a-p)}}{(a-p)!}z^{p-a} N^{PAX}_{\beta,p}.
	\]
		
	\item When $m = n$, then
	\[
	N_{\beta, a}^{PAXY}=\sum_{p=0}^a \Big(\prod_{h=0}^{p-1}  \frac{\sum_{k=1}^n -(z_k+\beta\cdot z_k) + \sum_{k=1}^m (x_k+\beta\cdot x_k) + r -h}{p-h}\Big)(-1)^{rp} N^{PAX}_{\beta,a-p}.
	\]
\end{enumerate}
\end{proposition}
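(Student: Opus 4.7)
The plan is to parallel the proof of Proposition~\ref{p.Gr_dual} by first reducing to the fixed component $F_{\mathbf 0}$ (the other components follow by the same index-shift argument) and then invoking the combinatorial identities of Lemmas~A.2, A.3, and A.4 of \cite{Zha}. The pullbacks $\iota^*_{F_{\mathbf 0}}(N^{PAX}_{\beta,\vec d})$ and $\iota^*_{\widehat F_{\mathbf 0}}(N^{PAXY}_{\beta,\vec d})$ already share the common factor $\widetilde C_\beta$ by Lemmas~\ref{l.PAX_pullback} and \ref{l.PAXY_pullback}, and after pulling out the powers of $z$ the comparison reduces to verifying the three claimed identities between the normalized factors $N^{PAX}_{\beta,a}$ and $N^{PAXY}_{\beta,a}$ displayed just before the statement of the proposition.

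Comparing these formulas with the expressions for $N_{\beta,a}$ and $\widehat N_{\beta,a}$ from Section~\ref{s.proof:Gr}, one sees that the pairs $(N^{PAX}_{\beta,a},N^{PAXY}_{\beta,a})$ and $(N_{\beta,a},\widehat N_{\beta,a})$ have parallel structures, differing by the substitution $\lambda_{f_i}\leftrightarrow -(\tfrac{x_i}{z}+\beta\cdot x_i)$ and $\eta_k\leftrightarrow -(\tfrac{z_k}{z}+\beta\cdot z_k)$ together with a swap of the roles of $r$ and $s$. This mirrors the geometric fact that the PAX and PAXY resolutions are cut out by sections of $S_r^\vee\otimes\pi_r^*F$ and $\pi_s^*(E^\vee\otimes F)$, which dualizes the equivariant variables relative to the setting of Section~\ref{s.generalized-Seiberg-duality}. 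Applying the abstract identities of \cite{Zha} with this modified substitution will then yield the three cases directly: case (1) ($m\geq n+2$) produces $N^{PAXY}_{\beta,a}=N^{PAX}_{\beta,a}$ from Lemma~A.2; case (2) ($m=n+1$) yields the binomial-type sum from Corollary~A.3, with the sign $(-1)^{r(a-p)}$ (rather than $(-1)^{s(a-p)}$) arising from the $r\leftrightarrow s$ swap; and case (3) ($m=n$) produces the intricate relation from Lemma~A.4, where the sign flips on $\eta_k$ and $\lambda_i$ convert $\sum_k(\tfrac{z_k}{z}+\beta\cdot z_k)-\sum_i(\tfrac{x_i}{z}+\beta\cdot x_i)+s$ into $-\sum_k(\tfrac{z_k}{z}+\beta\cdot z_k)+\sum_i(\tfrac{x_i}{z}+\beta\cdot x_i)+r$ inside the binomial-like factor.

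The main obstacle will be purely bookkeeping: one must check that every factor (Vandermonde, numerator, denominator) transforms consistently under the combined sign substitution and $r\leftrightarrow s$ swap, and that the summation over $\vec a$ is compatible with the relabeling of indices. Since the underlying combinatorial identities are already established in \cite{Zha}, no new argument is required beyond this careful comparison.
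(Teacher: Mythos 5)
Your proposal is correct and follows essentially the same route as the paper: the paper's proof simply repeats the argument of Proposition~\ref{p.Gr_dual}, applying Lemma~A.2, Corollary~A.3, and Lemma~A.4 of \cite{Zha} under the change of variables $\lambda_{f_i} \leftrightarrow -(\tfrac{x_i}{z}+\beta\cdot x_i)$, $\eta_k \leftrightarrow -(\tfrac{z_k}{z}+\beta\cdot z_k)$, with $N^{PAXY}$, $N^{PAX}$ taking the roles of $N$, $\widehat N$ and $s$ replaced by $r$. Your identification of the sign changes (e.g.\ $(-1)^{r(a-p)}$ in the $m=n+1$ case and the flipped linear form in the $m=n$ case) matches the paper's bookkeeping exactly.
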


\begin{proof}
This proof follows the proof of Proposition~\ref{p.Gr_dual} almost verbatim with the following changes: 
\begin{itemize}
    \item replace $N_{\beta,a}$ with $N_{\beta, a}^{PAXY}$; 
    \item replace $\widehat N_{\beta, a}$ with $N_{\beta,a}^{PAX}$; 
    \item replace $s$ with $r$; 
    \item from Lemma~A.2, Corollary~A.3, and Lemma~A.4 in \cite{Zha}, make the change of variables $\lambda_{f_i} \leftrightarrow -(\frac{x_{i}}{z}+\beta\cdot x_i)$ and $\eta_k\leftrightarrow -(\frac{z_k}{z}+\beta\cdot z_k)$.
\end{itemize} 

\end{proof}

The proof of Theorem~\ref{Thm.PAX/PAXY} follows almost immediately from Proposition~\ref{p.pax/paxy}. We will give just a few more details where there might be some chance for confusion. 

\begin{proof}[proof of Theorem~\ref{Thm.PAX/PAXY}]
As before, we  localize at the fixed points of the $\torus$-action. We will show that the $I$ functions restricted to the fixed points are related as described in the Theorem.

As in Lemma~\ref{l.PAXY_pullback}, we set $d_j=a_j-\beta\cdot x_{r+j}$ to obtain
\[
\iota^*_{\widehat{F}_\mathbf{0}}(\widetilde I_{\widehat{Z}_A}) = 
\sum_\beta \widetilde C_\beta \cdot J_\beta\cdot q^\beta \sum _{a\geq 0} N^{PAXY}_{\beta,a}  q_1^{a-\sum_{j=1}^s \beta\cdot x_{r+j}}
\]
and as in Lemma~\ref{l.PAX_pullback}, we set $d_j=a_j+\beta\cdot x_j$ to obtain 
\[
\iota^*_{F_\mathbf{0}}(\widetilde I_{Z_A}) = \sum_\beta \widetilde C_\beta \cdot J_\beta\cdot (q')^\beta \sum _{a\geq 0} N^{PAX}_{\beta,a} \cdot  (q_1')^{a+\sum_{i=1}^r \beta\cdot x_i},
\]

And we notice that 
\begin{equation}\label{eq:q1PAX}
		q_1^{a-\sum_{j=1}^s \beta\cdot x_{j+r}} =q_1^{a+\sum_{i=1}^r \beta\cdot x_{i}}\cdot q_1^{-\sum_{i=1}^m \beta\cdot x_i}
	\end{equation}

After applying Propostion~\ref{p.pax/paxy}, the remainder of the details are exactly as in the proof of Theorem~\ref{Them.building_block}
\end{proof}

\bibliographystyle{alpha}
\bibliography{references}

\end{document}